\setlist[enumerate]{labelsep=*, leftmargin=1.5pc}
\setlist[enumerate]{label=\normalfont(\roman*), ref=\roman*}
\newtheorem{thm}{Theorem}[section]
\newtheorem{lemma}[thm]{Lemma}
\newtheorem{cor}[thm]{Corollary}
\newtheorem{prop}[thm]{Proposition}
\theoremstyle{definition}
\newtheorem{example}[thm]{Example}
\newtheorem{remark}[thm]{Remark}
\newtheorem{definition}[thm]{Definition}
\newtheorem{notation}[thm]{Notation}
\newtheorem{question}[thm]{Question}
\newtheorem{problem}[thm]{Problem}
\newtheorem{construction}[thm]{Construction} 
\numberwithin{equation}{section}
\newcommand{\Z}{\mathbb{Z}}
\newcommand{\R}{\mathbb{R}}
\newcommand{\C}{\mathbb{C}}
\newcommand{\vol}[1]{\operatorname{vol}\mleft({#1}\mright)}
\newcommand{\on}{\operatorname}
\newcommand{\mat}{\left(\begin{array}}
\newcommand{\tam}{\end{array}\right)}
\newcommand{\twSp}{\widetilde{\on{Sp}}}
\newcommand{\Ru}{\on{Ru}}
\newcommand{\CZ}{\on{CZ}}
\newcommand{\LCZ}{\on{LCZ}}
\newcommand{\GL}{\on{GL}}
\newcommand{\Sp}{\on{Sp}}
\newcommand{\U}{\on{U}}
\newcommand{\On}{\on{O}}
\newcommand{\Spa}{\mathfrak{sp}}
\definecolor{dark green}{rgb}{0.0, 0.6, 0.0}
\begin{document}
\author[J.~Chaidez]{J.~Chaidez}
\address{Department of Mathematics\\Princeton University\\Princeton, NJ\\08544\\USA}
\email{jchaidez@princeton.edu}

\author[O.~Edtmair]{O.~Edtmair}
\address{Department of Mathematics\\University of California at Berkeley\\Berkeley, CA\\94720\\USA}
\email{oliver\_edtmair@berkeley.edu}
\title[The Ruelle Invariant And Convexity In Higher Dimensions]{The Ruelle Invariant And Convexity In Higher Dimensions}

\begin{abstract} We construct the Ruelle invariant of a volume preserving flow and a symplectic cocycle in any dimension and prove several properties. In the special case of the linearized Reeb flow on the boundary of a convex domain $X$ in $\R^{2n}$, we prove that the Ruelle invariant $\Ru(X)$, the period of the systole $c(X)$ and the volume $\vol{X}$ satisfy
\[\Ru(X) \cdot c(X) \le C(n) \cdot \vol{X}\]
Here $C(n) > 0$ is an explicit constant dependent on $n$. As an application, we construct dynamically convex contact forms on $S^{2n-1}$ that are not convex, disproving the equivalence of convexity and dynamical convexity in every dimension.
\end{abstract}

\maketitle

\section{Introduction} \label{sec:introduction} In \cite{r1985}, Ruelle introduced his eponymous \emph{Ruelle invariant} $\on{Ru}(Y,\phi)$ of a flow $\phi: \R \times Y \to Y$ on a $3$-manifold $Y$ preserving a smooth measure $\mu$. This invariant is the integral of a function $\on{ru}(\phi)$ that (morally speaking) measures the linking of nearby trajectories of $\phi$ in $Y$.

\begin{figure}[h]
\centering
\includegraphics[width=.8\textwidth]{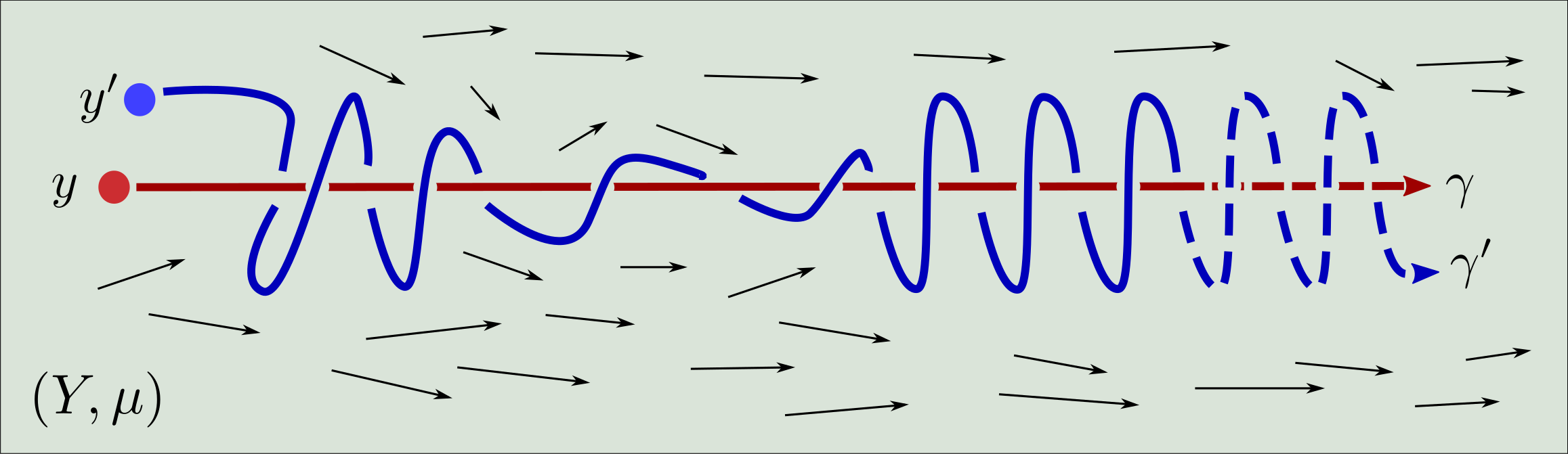}
\caption{The function $\on{ru}(\phi)$ at $y$ measures the time-averaged linking of the length $T$ trajectory $\gamma$ starting at $y$ and a nearby trajectory $\gamma'$, as $T$ goes to $\infty$.}
\label{fig:ru_picture}
\end{figure}
\noindent Since its introduction, the Ruelle invariant has appeared in low-dimensional dynamics (cf. Gambaudo-Ghys \cite{gg1997,gg2004,g2007}), bifurcation theory (cf. Parlitz \cite{p1993}) and Sturm-Liouville theory (cf. Schulz-Baldes \cite{s2007,s2012}). More recently, the Ruelle invariant has been applied very fruitfully to the study of 3-dimensional Reeb dynamics and 4-dimensional symplectic geometry \cite{h2019,ce2021,dgz2021}. 

\vspace{3pt}

In our previous work \cite{ce2021}, we applied the Ruelle invariant to find the first examples of contact forms on the $3$-sphere that are dynamically convex in the sense of Hofer-Wysocki-Zehnder \cite{hwz1998} but not symplectically convex (see Definition \ref{def:symplectic_convexity}). This was a longstanding unsolved problem, and remains particularly impervious to more conventional modern methods in symplectic geometry such as Floer theory.

\vspace{3pt}

In this paper, we initiate the study of the Ruelle invariant in higher dimensional Reeb dynamics. Specifically, we construct a substantial generalization of the Ruelle invariant in \cite{r1985} to symplectic cocycles of flows in any dimension. This generalization is related to previous ones such as the asymptotic Maslov index \cite{cgip2003}. We then formulate and prove higher dimensional versions of results in \cite{ce2021}, \cite{h2019} and \cite{dgz2021}. In particular, we show that dynamical convexity and symplectic convexity are inequivalent in all dimensions by constructing toric counter-examples, generalizing a construction of Dardennes-Gutt-Zhang \cite{dgz2021} from dimension four.

\subsection{Ruelle Invariant Of A Symplectic Cocycle} \label{subsec:ruelle_invariant_of_a_cocycle} Let us begin by summarizing our construction of the Ruelle invariant and discussing its important formal properties.

\vspace{3pt}

Let $Y$ be a compact manifold equipped with an autonomous flow $\phi:\R \times Y \to Y$ and let $E \to Y$ be a symplectic vector bundle. Also let $\rho:\R \times Y \to Y$ denote the obvious projection.

\begin{definition} A \emph{symplectic cocycle} $\Phi$ on $E$ for the flow $(Y,\phi)$ is a symplectic bundle map
\begin{equation}\Phi:\rho^*E \to \phi^*E \qquad\text{satisfying}\qquad \Phi(s + t,x) = \Phi(t,\phi(s,x)) \Phi(s,x)\end{equation}
\end{definition}

\noindent Fix a $\phi$-invariant Borel measure on $Y$, a symplectic cocycle $(E,\Phi)$ with vanishing first Chern class $c_1(E)$ and a homotopy class of trivialization $\tau:\Lambda E \simeq \C$ of the determinant line bundle $\Lambda E$. Here we consider the complex determinant line bundle with respect to an auxiliary choice of compatible complex structure on $E$ (see \S \ref{subsec:rotation_function}). 

\begin{thm} \label{thm:Ruelle_invariant_intro} There is a well-defined Ruelle density and Ruelle invariant, denoted respectively by
\[\on{ru}(\Phi,\tau) \in L^1(Y,\mu) \qquad \text{and}\qquad \on{Ru}(\Phi,\tau,\mu) := \int_Y \on{ru}(\Phi,\tau) \cdot \mu \]
Moreover, the Ruelle density and invariant satisfy the following properties.
\begin{itemize}
    \item[(a)] (Covariance) If $\Psi:(E,\Phi) \to (E',\Phi')$ is a symplectic cocycle isomorphism that maps $\tau$ to $\tau'$, then
    \[\on{ru}(\Phi,\tau) = \on{ru}(\Phi',\tau') \qquad\qquad \on{Ru}(\Phi,\tau,\mu) = \on{Ru}(\Phi',\tau',\mu)\]
    \item[(b)] (Direct Sum) If $\Phi = \Phi_1 \oplus \Phi_2$ is a direct sum of symplectic cocycles and $\tau = \tau_1 \otimes \tau_2$, then
    \[\on{ru}(\Phi_1 \oplus \Phi_2,\tau_1 \otimes \tau_2) = \on{ru}(\Phi_1,\tau_1) + \on{ru}(\Phi_2,\tau_2)\]
    \item[(c)] (Linearity) If $a\mu + b\nu$ is a positive combination of $\phi$-invariant Borel measures $\mu$ and $\nu$, then
    \[
    \on{Ru}(\Phi,\tau,a\mu + b\nu) = a\on{Ru}(\Phi,\tau,\mu) + b\on{Ru}(\Phi,\tau,\nu)\]
    \item[(d)] (Trivial Bundle) If $\Phi$ is a symplectic cocycle on $\C^n$ with the tautological trivialization $\tau_{\on{std}}$, then
    \[\on{ru}(\Phi,\tau_{\on{std}}) = \lim_{T \to \infty} \frac{q \circ \widetilde{\Phi}_T}{T} \qquad\qquad \on{Ru}(\Phi,\tau_{\on{std}}) = \lim_{T \to \infty} \frac{1}{T} \int_Y q \circ \widetilde{\Phi}_T \cdot \mu\]
    Here $q$ is any rotation quasimorphism (see \S \ref{subsec:rotation_quasimorphisms}) and $\widetilde{\Phi}:\R\times Y \to \widetilde{\on{Sp}}(2n)$ is the lift of $\Phi$ (regarded as a map $\R \times Y \to \on{Sp}(2n)$) to the universal cover $\widetilde{\on{Sp}}(2n)$.
\end{itemize}
\end{thm}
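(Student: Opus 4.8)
The plan is to construct the Ruelle density and invariant in two stages: first for the trivial bundle $\C^n$ with its tautological trivialization, and then for a general symplectic cocycle with vanishing $c_1$ by locally trivializing and gluing. For the trivial bundle case, regard $\Phi$ as a measurable family of paths $t \mapsto \Phi(t,x)$ in $\Sp(2n)$ starting at the identity, lift canonically to $\widetilde\Phi_t(x) \in \widetilde{\Sp}(2n)$, and fix a rotation quasimorphism $q:\widetilde{\Sp}(2n)\to\R$ (as developed in \S\ref{subsec:rotation_quasimorphisms}). I would then \textbf{define} $\on{ru}(\Phi,\tau_{\on{std}})(x) := \lim_{T\to\infty} q(\widetilde\Phi_T(x))/T$ and show the limit exists a.e. and in $L^1$. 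The key input is the cocycle identity, which gives $q(\widetilde\Phi_{s+t}(x)) = q\bigl(\widetilde\Phi_t(\phi(s,x))\widetilde\Phi_s(x)\bigr)$; since $q$ is a quasimorphism, $q(\widetilde\Phi_{s+t}(x)) - q(\widetilde\Phi_t(\phi_s x)) - q(\widetilde\Phi_s(x))$ is uniformly bounded by the defect constant $D(q)$. Thus $f_T(x) := q(\widetilde\Phi_T(x))$ is a bounded-error subadditive (sub-cocycle) sequence over the measure-preserving flow, and Kingman's subadditive ergodic theorem (or its bounded-defect variant, essentially the Gambaudo--Ghys argument) yields the a.e.\ and $L^1$ convergence, with $\int_Y \on{ru}(\Phi,\tau_{\on{std}})\,\mu = \lim_T \frac1T\int_Y f_T\,\mu$; independence of the choice of $q$ follows because any two rotation quasimorphisms differ by a bounded function, whose contribution to $f_T/T$ vanishes in the limit.

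Next I would upgrade to a general symplectic cocycle $(E,\Phi)$ with $c_1(E)=0$ and a chosen homotopy class of determinant trivialization $\tau$. Choose a compatible complex structure on $E$; since $c_1(E)=0$ the bundle $E$ is trivializable as a symplectic (equivalently unitary) bundle, so pick a symplectic trivialization $\Theta: E \simeq Y\times\C^n$ compatible with $\tau$ up to the prescribed homotopy class of $\Lambda\Theta$ versus $\tau$. Transporting $\Phi$ through $\Theta$ produces a symplectic cocycle $\Phi^\Theta$ on the trivial bundle, and I \textbf{define} $\on{ru}(\Phi,\tau) := \on{ru}(\Phi^\Theta,\tau_{\on{std}})$. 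The content here is \emph{well-definedness}: two admissible trivializations $\Theta,\Theta'$ differ by a gauge transformation $g:Y\to\U(n)$ whose induced map $\det g:Y\to\U(1)$ is null-homotopic (because both are compatible with the same homotopy class $\tau$), hence lifts to $\wt g:Y\to\wt\U(1)\subset\widetilde{\Sp}(2n)$; then $\widetilde{\Phi^{\Theta'}}_T(x) = \wt g(\phi_T x)\,\widetilde{\Phi^\Theta}_T(x)\,\wt g(x)^{-1}$, so by the quasimorphism property $q$ of the two sequences differ by at most $2D(q) + 2\sup_Y|q\circ\wt g|$, which is bounded, and dividing by $T$ kills it. This is the step I expect to be the \emph{main obstacle}: pinning down exactly how the homotopy class of $\tau$ controls the lift $\wt g$, checking measurability/continuity of everything, and making sure the universal-cover lift is the canonical one so that the two cocycles' lifts really are conjugate by $\wt g$ on the nose.

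With $\on{ru}$ and $\on{Ru}$ defined, the four properties follow fairly mechanically. For \textbf{(a) Covariance}, a cocycle isomorphism $\Psi$ matching $\tau$ to $\tau'$ becomes, after trivializing both sides, exactly a gauge change of the type analyzed in the well-definedness step, so $\on{ru}(\Phi,\tau) = \on{ru}(\Phi',\tau')$ pointwise a.e., and integrating gives the invariant statement. For \textbf{(b) Direct Sum}, trivialize $E_1, E_2$ compatibly with $\tau_1,\tau_2$; then $E_1\oplus E_2$ is trivialized compatibly with $\tau_1\otimes\tau_2$, and under the block-diagonal inclusion $\Sp(2n_1)\times\Sp(2n_2)\hookrightarrow\Sp(2n_1+2n_2)$ a rotation quasimorphism restricts to the sum of rotation quasimorphisms on the factors (this is a standard property of $q$, e.g.\ for the mean Conley--Zehnder index, to be recorded in \S\ref{subsec:rotation_quasimorphisms}); hence $q(\widetilde{\Phi_1\oplus\Phi_2}_T) = q(\widetilde{\Phi_1}_T) + q(\widetilde{\Phi_2}_T)$ exactly, and dividing by $T$ and passing to the limit gives additivity of the densities. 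For \textbf{(c) Linearity}, the density $\on{ru}(\Phi,\tau)$ does not depend on the measure at all, so $\on{Ru}(\Phi,\tau,a\mu+b\nu) = \int_Y \on{ru}(\Phi,\tau)\cdot(a\mu+b\nu) = a\int_Y \on{ru}\cdot\mu + b\int_Y \on{ru}\cdot\nu$ by linearity of the integral, once one notes $\on{ru}(\Phi,\tau)\in L^1(Y,\mu)\cap L^1(Y,\nu)\subseteq L^1(Y,a\mu+b\nu)$. Finally \textbf{(d) Trivial Bundle} is true by construction, with the independence of the rotation quasimorphism $q$ already established in the first stage. I would close by remarking that the $L^1$ bound in the trivial-bundle case, needed for all integrals above to converge, comes from the uniform estimate $|q(\widetilde\Phi_T(x))| \le \sup_{M\in K}|q(M)|\cdot\lceil T\rceil + (\lceil T\rceil)D(q)$ where $K$ is a compact set containing the (bounded) family $\{\Phi(t,x): t\in[0,1], x\in Y\}$ — here compactness of $Y$ is used — so $f_T/T$ is uniformly bounded and dominated convergence applies.
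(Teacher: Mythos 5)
Your treatment of the trivial-bundle case is essentially the paper's: the cocycle identity plus the quasimorphism defect makes $f_T = q\circ\widetilde\Phi_T$ a sub-additive process, Kingman's theorem gives a.e.\ and $L^1$ convergence of $f_T/T$, equivalence of rotation quasimorphisms kills the dependence on $q$, and compactness of $Y$ supplies the integrability bounds. Properties (a)--(d) are then handled the same way the paper handles them (your direct-sum identity is only exact for the determinant quasimorphism, not for an arbitrary $q$, but the error is bounded and disappears after dividing by $T$, so that is a cosmetic point).

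The genuine gap is in your reduction of the general case to the trivial-bundle case. You assert that $c_1(E)=0$ implies $E$ is trivializable as a symplectic (unitary) bundle, and your entire definition of $\on{ru}(\Phi,\tau)$ for general $E$ rests on choosing a global trivialization $\Theta:E\simeq Y\times\C^n$. This is false for bundles of complex rank greater than one over a base of dimension greater than two: higher Chern classes obstruct triviality even when $c_1$ vanishes (e.g.\ a rank-$2$ complex bundle over $S^4$ with $c_2\neq 0$ has $c_1=0$ automatically but is nontrivial). The hypothesis in the theorem only gives triviality of the determinant line bundle $\Lambda E$, which is exactly what the homotopy class $\tau:\Lambda E\simeq\C$ records; it does not give a trivialization of $E$ itself. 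The paper's construction avoids this by never globally trivializing $E$: the polar decomposition $\Phi = U\circ P$ is performed intrinsically on $E$ using a compatible complex structure $J$, the induced map $\Lambda U$ on the determinant line is conjugated by $\tau$ to produce a map $u:\R\times Y\to U(1)$, and this is lifted to the rotation function $\tilde u_T:Y\to\R$. Global data is needed only on $\Lambda E$. Trivializations of $E$ are used only over individual trajectories $\gamma:[0,T]\to Y$ (which are contractible, so $E|_\gamma$ is always trivial) in order to identify $\tilde u_T(x)$ with $r(\widetilde\Phi_\Xi)$ and run the quasimorphism/subadditivity argument. To repair your proof you would need to replace the global trivialization $\Theta$ with this intrinsic determinant-line construction, or restrict the theorem to trivializable $E$; as written, the general case is not established. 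Your well-definedness argument (gauge changes with null-homotopic determinant contribute a bounded conjugation error) is sound in spirit and reappears in the paper as the automorphism lemma used to prove independence of $J$ and of the representative of $\tau$, but it cannot substitute for the missing existence of the trivialization.
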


The data needed to apply Theorem \ref{thm:Ruelle_invariant_intro} arises in a fairly natural way for the dynamical systems that arise in symplectic geometry. Here are the main examples of interest.

\begin{example}[Symplectic Flows] \label{ex:Ru_of_symplectic_flow} Let $(X,\omega)$ be a symplectic manifold with a compact symplectic submanifold $\Sigma \subset X$ and let $V$ be a complete symplectic vector field tangent to $\Sigma$.

\vspace{3pt}

The differential of the symplectic flow $\Phi$ generated by $V$ induces a symplectic cocycle
\[T\Phi:\R \times TX \to \Phi^*TX\] 
The flow $\Phi$ preserves $\Sigma$ since $V$ is tangent to $\Sigma$. Moreover, $\Sigma$ is equipped with the natural invariant measure $\omega^m|_\Sigma$ where $\on{dim}(\Sigma) = 2m$. Given a homotopy class of trivialization $\tau:\Lambda(TX) \simeq \C$ along $\Sigma$, we thus acquire a Ruelle density and invariant via Theorem \ref{thm:Ruelle_invariant_intro}.
\[\on{ru}(T\Phi|_\Sigma,\tau)  
\qquad \text{and}\qquad \on{Ru}(T\Phi|_\Sigma,\tau,\omega^m|_\Sigma)\]
More generally, we only need to assume that the flow $\Phi$ is defined near $\Sigma$ and that $\Sigma$ is a (not necessarily symplectic) submanifold equipped with an invariant measure $\mu$. This special case is discussed in \cite{cgip2003}. 
\end{example}

\begin{example}[Hamiltonian Flows] Let $X$ be a compact symplectic manifold with boundary and let $H:X \to \R$ be a Hamiltonian that is locally constant on $\partial X$. Assume that $c_1(X) = 0$.

\vspace{3pt}

Then as a special case of Example \ref{ex:Ru_of_symplectic_flow}, we get a Ruelle invariant associated to $X$, the flow $\Phi^H$ of $H$ and a chosen homotopy class of trivialization $\tau$. We denote this by
\[\on{Ru}(X,H,\tau) \qquad\text{or simply }\quad\on{Ru}(X,H)\quad\text{if}\quad H^1(X;\Z) = 0\]
\end{example}

\begin{example}[Reeb Flows] Recall that a contact $(2n-1)$-manifold $(Y,\xi)$ is a manifold equipped with a $(2n-2)$-plane field $\xi \subset TY$, called the contact structure, that is the kernel of a contact form $\alpha$. A contact form on $Y$ is a $1$-form that satisfies
\[\ker(d\alpha) \subset TY \text{ is rank 1} \qquad\text{and}\qquad \alpha|_{\ker(d\alpha)} > 0\]
Every contact form comes equipped with a natural Reeb vector field $R$, defined by
\[\alpha(R) = 1 \qquad \iota_Rd\alpha = 0\]
The flow $\Phi:\R \times Y \to Y$ of the Reeb vector field is simply called the Reeb flow of $Y$. Note that $\Phi$ preserves $\alpha$ and the natural volume form $\alpha \wedge d\alpha^{n-1}$. The contact structure $\xi$ of $Y$ is a symplectic vector bundle with symplectic form $d\alpha|_\xi$. Thus
\[(\xi,T\Phi|_\xi)\]
has the structure of a symplectic cocycle. If $\xi$ has vanishing first Chern class, we can choose a homotopy class of trivialization $\tau:\Lambda \xi \simeq \C$ to acquire a Ruelle invariant, denoted in this case by
\[
\on{Ru}(Y,\alpha,\tau) \qquad\text{or simply}\qquad \on{Ru}(Y,\alpha) \quad\text{if}\quad H^1(Y;\Z) = 0
\]
\end{example}

\subsection{Ruelle Invariant Of Liouville Domains} \label{subsec:convex_Reeb_flows} In the case of Liouville domains, the Ruelle invariant yields a new symplectomorphism invariant (under some mild topological hypotheses).

\vspace{3pt}

Recall that a \emph{Liouville domain} $(X,\lambda)$ is a compact symplectic manifold $(X,\omega)$ with a vector field $Z$ and a symplectically dual $1$-form $\lambda = \iota_Z\omega$ such that
\[\omega = d\lambda \qquad\text{and}\qquad Z \text{ points outward along }\partial X\]
The 1-form $\lambda$ and the vector field $Z$ are called the \emph{Liouville form} and \emph{Liouville vector field} of $X$. The \emph{skeleton} $\on{Skel}(X)$ of a Liouville domain $(X,\lambda)$ is the set given by
\[
\on{Skel}(X) = \bigcap_{t < 0} \Phi^Z_t(X) \quad\text{where}\qquad \Phi^Z \text{ is the flow generated by }Z
\] 
The boundary $\partial X$ of a Liouville domain $X$ is a contact manifold with contact form $\lambda|_{\partial X}$. Moreover, $X$ admits a canonical Hamiltonian on the complement of the skeleton 
\[
H_X:X \setminus \on{Skel}(X) \to (0,1] \qquad \text{characterized by}\qquad ZH_X = H_X \text{ and }H_X^{-1}(1) = \partial X
\]
The level sets of $H_X$ are canonically contactomorphic to $\partial X$ and the Hamiltonian vector field of $H_X$ agrees with the Reeb vector field of $\lambda|_{\partial X}$ on each level. Note that $H_X$ extends continuously to the skeleton as $H_X|_{\on{Skel}(X)} = 0$, but in general this extension is not differentiable. 

\begin{figure}[h]
\centering
\includegraphics[width=.9\textwidth]{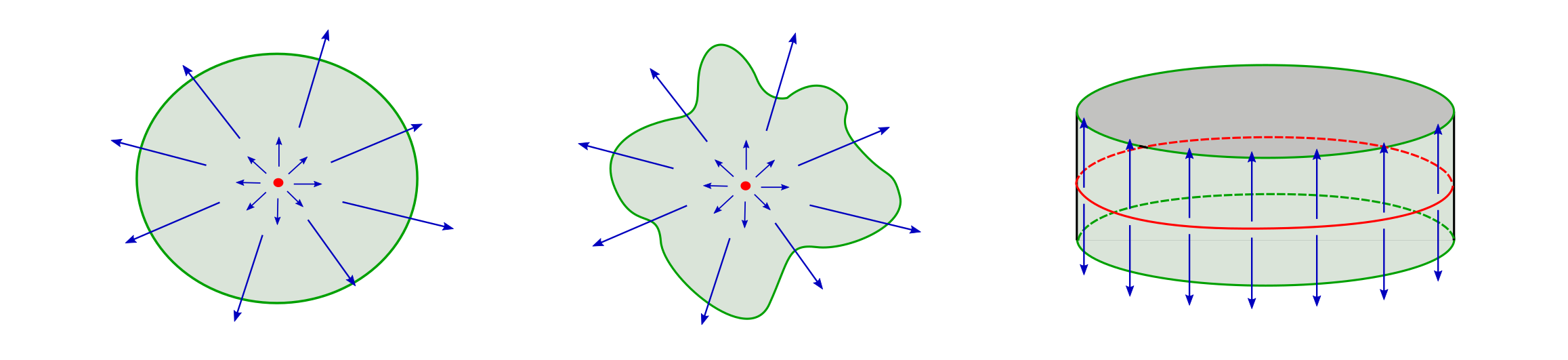}
\caption{Some pictures of 2-dimensional Liouville domains. The skeleton is depicted in red and the Liouville vector field in blue.}
\label{fig:liouville_domain}
\end{figure}

\begin{example}[Star-Shaped Domains] A \emph{star-shaped} domain $X \subset \C^n$ with smooth boundary $Y$ is a domain such that
\[0 \in \on{int}(X) \qquad\text{and}\qquad \text{the radial vector field }\partial_r \text{ is transverse to }Y\]
A star-shaped domain is naturally a Liouville domain, via restriction of the standard symplectic form and Liouville vector field on $\C^n$. In standard coordinates, these are given by
\[\omega = \sum_{j=1}^n dx_j \wedge dy_j \qquad\text{and}\qquad Z = \frac{1}{2} \sum_j x_j \partial_{x_j} + y_j \partial_{y_j}  \qquad\text{where}\qquad z_j = x_j + i \cdot y_j \]
As with any Liouville domain, the restriction $\lambda|_Y$ of the Liouville form $\lambda = \iota_Z\omega$ is a contact form on the boundary $Y$. The Reeb vector field $R$ on $Y$ is given 
\[R = \frac{J\nu}{\langle Z,\nu\rangle} \qquad\text{where }\nu\text{ is the normal vector field and }J\text{ is multiplication by }i\] \end{example}

Strictly speaking, the Ruelle invariant of $H_X$ is not well-defined since $H_X$ is only defined away from $\on{Skel}(X)$. However we can show that $\on{ru}(X,H_X)$ is invariant under $Z$. Thus we can take
\[
\on{Ru}(X,H_X) := \lim_{E \to 0} \; \on{Ru}(X_E,H_X) \qquad\text{where}\qquad X_E = H_X^{-1}[E,1]
\]By applying a standard argument using Grey stability (cf. \cite[Lemma 3.5]{ce2021} or \cite[\S 3]{dgz2021}) along with Stokes theorem, one may prove the following result.

\begin{lemma}\label{lem:ruelle_liouville_domain} Let $(X,\lambda)$ be a Liouville domain with $H^1(X;\Z) = H^2(X;\Z) = 0$. Then
\[\on{Ru}(X,H_X) = \on{Ru}(\partial X,\lambda|_{\partial X})\]
Furthermore, $\on{Ru}(X,H_X) = \on{Ru}(W,H_W)$ if $X$ and $W$ are symplectomorphic. \end{lemma}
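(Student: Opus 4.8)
The plan is to split Lemma~\ref{lem:ruelle_liouville_domain} into two independent assertions: first, that $\on{Ru}(X,H_X) = \on{Ru}(\partial X, \lambda|_{\partial X})$ for a single Liouville domain, and second, that $\on{Ru}(X,H_X)$ is a symplectomorphism invariant. For the first assertion, recall that by definition $\on{Ru}(X,H_X) = \lim_{E\to 0}\on{Ru}(X_E, H_X)$ where $X_E = H_X^{-1}[E,1]$. Since the level sets of $H_X$ are all canonically contactomorphic to $\partial X$, and the Hamiltonian flow of $H_X$ on each level set agrees with the Reeb flow of $\lambda|_{\partial X}$, the Ruelle density $\on{ru}(X,H_X)$ is $Z$-invariant (this is stated in the excerpt). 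Hence the Ruelle invariant of $X_E$ is computed by integrating a $Z$-invariant density; we want to compare this against the Ruelle invariant of $\partial X$ itself. The comparison requires relating the Ruelle cocycle on $TX$ along a level set $H_X^{-1}(c)$ with the Ruelle cocycle on $\xi = \ker(\lambda|_{\partial X})$ along $\partial X$. Using the splitting $TX|_{H_X^{-1}(c)} \cong \xi \oplus \C$ (where the $\C$ factor is spanned by the Reeb and Liouville directions, a symplectic plane on which the linearized flow acts trivially up to homotopy), the Direct Sum property (Theorem~\ref{thm:Ruelle_invariant_intro}(b)) shows the $\C$ factor contributes nothing once $\tau$ is chosen compatibly, so $\on{ru}(X,H_X)$ pulls back from $\on{ru}(\partial X, \lambda|_{\partial X})$ under the contactomorphism. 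Then the measures must be matched: $\omega^n$ restricted to $X_E$, written in coordinates adapted to the flow $\Phi^Z$, decomposes as $dH_X \wedge (\text{Reeb time coordinate}) \wedge (\alpha\wedge d\alpha^{n-1}$ on the level$)$ up to the appropriate Jacobian factor $H_X^{n-1}$ or similar; carrying out this change of variables and taking $E\to 0$ gives the identity, with Linearity (Theorem~\ref{thm:Ruelle_invariant_intro}(c)) handling the integration over the $H_X$-interval.

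For the second assertion, suppose $\psi: X \to W$ is a symplectomorphism. The subtlety is that $\psi$ need not carry the Liouville form $\lambda_X$ to $\lambda_W$, only $d\lambda_X$ to $d\lambda_W$; hence it need not send $H_X$ to $H_W$ nor $\on{Skel}(X)$ to $\on{Skel}(W)$. The standard fix, which is the Gray stability argument alluded to in the excerpt (cf.\ \cite[Lemma 3.5]{ce2021}, \cite[\S 3]{dgz2021}), is as follows: the one-form $\psi^*\lambda_W - \lambda_X$ is closed, hence exact since $H^1(X;\Z)=0$, say equal to $df$. Consider the family of Liouville forms $\lambda_t = \lambda_X + t\,df$ on $X$ interpolating between $\lambda_X$ and $\psi^*\lambda_W$; all have the same differential $\omega$, so all are genuine Liouville forms (the Liouville vector field $Z_t$ with $\iota_{Z_t}\omega = \lambda_t$ still points outward along $\partial X$ after possibly a small deformation, but since $\partial X$ is fixed and $df$ vanishes to the needed order one should instead run the argument on a collar / use that the relevant contact forms on $\partial X$ are $\lambda_t|_{\partial X}$, which are all contact). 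On $\partial X$ the one-forms $\lambda_t|_{\partial X}$ are a smooth family of contact forms with the same kernel only if $df|_{\partial X}$ is suitably controlled; in general one invokes Gray stability to produce a family of diffeomorphisms of $\partial X$ intertwining the Reeb flows up to conjugacy, and then uses the Covariance property (Theorem~\ref{thm:Ruelle_invariant_intro}(a)) to conclude $\on{Ru}(\partial X, \lambda_t|_{\partial X})$ is constant in $t$.

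Concretely, I would argue: $\on{Ru}(X,H_X) = \on{Ru}(\partial X, \lambda_X|_{\partial X})$ by the first assertion; then Gray stability applied to the path $\lambda_t|_{\partial X}$ shows $\on{Ru}(\partial X, \lambda_X|_{\partial X}) = \on{Ru}(\partial X, \psi^*\lambda_W|_{\partial X})$; finally, since $\psi$ restricts to a contactomorphism $(\partial X, \psi^*\lambda_W|_{\partial X}) \to (\partial W, \lambda_W|_{\partial W})$ and the Ruelle invariant is manifestly diffeomorphism-invariant (again Covariance, since $\psi$ pulls back the cocycle and trivialization, using $H^2(X;\Z)=0$ to ensure the trivialization $\tau$ is unique up to homotopy and hence automatically matched), we get $\on{Ru}(\partial X, \psi^*\lambda_W|_{\partial X}) = \on{Ru}(\partial W, \lambda_W|_{\partial W}) = \on{Ru}(W, H_W)$. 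Chaining these equalities yields $\on{Ru}(X,H_X) = \on{Ru}(W,H_W)$.

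The main obstacle I anticipate is the first assertion — specifically, the careful bookkeeping of the measure and the trivialization under the identification $TX|_{\text{level}} \cong \xi \oplus \C$, and justifying that the limit $E\to 0$ commutes with the integral (dominated convergence, using that $\on{ru}(\Phi,\tau)\in L^1$ uniformly in $E$, which should follow from the $Z$-invariance of the density plus a uniform bound near the skeleton). The Gray stability step in the second assertion is routine given the cited precedents, but one must be slightly careful that the deformation $\lambda_t$ keeps the Liouville vector field outward-pointing along $\partial X$; shrinking to a collar neighborhood of $\partial X$ where the deformation can be made arbitrarily small resolves this, since the Ruelle invariant only depends on the contact form on the boundary.
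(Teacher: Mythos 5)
The paper never actually proves Lemma~\ref{lem:ruelle_liouville_domain}; it only asserts that it follows from ``a standard argument using Gray stability along with Stokes' theorem,'' deferring to \cite[Lemma 3.5]{ce2021} and \cite[\S 3]{dgz2021}. Your proposal is therefore an attempt to fill in an omitted proof, and your outline matches the paper's sketch. The first half is essentially right and can be made completely precise with the tools already in the paper: the splitting $TX|_{H_X^{-1}(c)} \cong \on{span}(Z,R)\oplus \xi$ is preserved by the linearized flow (the paper proves $[R,Z]=0$ and $\Psi_\sigma = \on{Id}_2\oplus\Psi_\tau$ in its lemma comparing $\LCZ(X,H_X;\gamma)$ with $\LCZ(Y,\lambda|_Y;\gamma)$), the $\C$-summand contributes zero by the Direct Sum property, and in Liouville coordinates $\omega^n = d(H_X^n)\wedge\alpha\wedge (d\alpha)^{n-1}$, so $\on{Ru}(X_E,H_X) = (1-E^n)\cdot\on{Ru}(\partial X,\lambda|_{\partial X})$ exactly; no dominated-convergence argument is needed. (Small corrections: existence of $\tau$ uses $H^2(X;\Z)=0$ while uniqueness up to homotopy uses $H^1(X;\Z)=0$, and the uniqueness one needs on $\partial X$ is for the class restricted from $X$, since $H^1(\partial X;\Z)$ need not vanish.)

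The genuine gap is in the second half. You write that Gray stability produces ``a family of diffeomorphisms of $\partial X$ intertwining the Reeb flows up to conjugacy,'' and then conclude by Covariance. That is not what Gray stability gives: for a family of contact forms $\alpha_t$ it produces diffeomorphisms $\phi_t$ with $\phi_t^*\alpha_t = g_t\,\alpha_0$ for positive functions $g_t$, i.e.\ it matches contact \emph{structures}, not contact \emph{forms}. The Reeb flows of $\alpha_t$ and of $g_t\alpha_0$ are in general not conjugate to that of $\alpha_0$, and the Ruelle invariant is an invariant of the Reeb dynamics (hence of the form), not of the structure --- this is precisely why symplectomorphism invariance of $\on{Ru}$ is a nontrivial statement rather than an instance of naturality. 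So the step ``Gray stability $\Rightarrow$ $\on{Ru}(\partial X,\lambda_t|_{\partial X})$ is constant in $t$'' does not follow as written, even setting aside the (correctly flagged) issue that the intermediate $\lambda_t|_{\partial X}$ need not be contact. What is actually needed is an argument that $\on{Ru}$ is unchanged under replacing $\lambda$ by $\lambda + df$ --- e.g.\ a computation showing the variation of the Ruelle invariant (or of the Hamiltonian-flow formulation $\on{Ru}(X,H_t)$ along the induced family of canonical Hamiltonians) vanishes for exact deformations, which is where the ``Stokes' theorem'' in the paper's sketch enters. As it stands, your proof of the second assertion would go through verbatim for any dynamical invariant of the contact form, which is too strong; the exactness of $\psi^*\lambda_W - \lambda_X$ must be used beyond merely guaranteeing a path of forms.
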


\noindent Thus the Ruelle invariant $\on{Ru}(X) = \on{Ru}(X,H_X)$ is a symplectic invariant of Liouville domains. 

\begin{example}[Toric Domains] In the case of a toric domain, we can prove an explicit formula for the Ruelle invariant that generalizes the formulas appearing in \cite{h2019,dgz2021}. 

\vspace{3pt}

Let $X_\Omega \subset \C^n$ be a smooth, star-shaped toric domain with moment region $\Omega \subset [0,\infty)^n$. Let $f_\Omega:[0,\infty)^n \to [0,\infty)$ be the unique smooth function such that
\[f^{-1}_\Omega[0,1] = \Omega \qquad\text{and}\qquad \sum_i x_i \cdot \partial_i f_\Omega = f_\Omega\]

\begin{prop} (Proposition \ref{prop:toric_ruelle}) The Ruelle invariant of $X_\Omega$ is given by the following formula.
\[
\on{Ru}(X_\Omega) = \sum_i \int_\Omega \partial_i f_\Omega \cdot \on{dvol}_{\R^n}
\]
\end{prop}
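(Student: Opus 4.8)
The plan is to reduce the computation of $\on{Ru}(X_\Omega)$ to an explicit integral over the moment region using the Trivial Bundle formula of Theorem \ref{thm:Ruelle_invariant_intro}(d), exploiting the torus symmetry to diagonalize the linearized flow. First I would recall that, since $X_\Omega$ is star-shaped, Lemma \ref{lem:ruelle_liouville_domain} identifies $\on{Ru}(X_\Omega)$ with $\on{Ru}(\partial X_\Omega, \lambda|_{\partial X_\Omega})$, and I may as well compute $\on{Ru}(X_E, H_{X_\Omega})$ on the region $X_E = H_{X_\Omega}^{-1}[E,1]$ and let $E \to 0$; by the $Z$-invariance of the Ruelle density it suffices to integrate over a single level set, or equivalently to integrate the Ruelle density against $\omega^n$ over all of $X_\Omega$ and note that the Hamiltonian flow of $H_{X_\Omega}$ is what carries the dynamics. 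The key structural input is that $H_{X_\Omega} = f_\Omega(\pi_1,\dots,\pi_n)$ where $\pi_i = \pi|z_i|^2$ (or $\tfrac12|z_i|^2$, depending on normalization) are the standard moment coordinates: the characterizing Euler-type condition $\sum_i x_i \partial_i f_\Omega = f_\Omega$ is exactly the condition $ZH_{X_\Omega} = H_{X_\Omega}$ in these coordinates, and $f_\Omega^{-1}[0,1] = \Omega$ matches $H_{X_\Omega}^{-1}[0,1] = X_\Omega$.

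Next I would compute the linearized Hamiltonian flow explicitly. Because $H_{X_\Omega}$ is a function of the moment coordinates alone, its Hamiltonian vector field at a point $z$ with $\pi_i(z) = s_i$ rotates the $i$-th complex coordinate plane at constant angular speed $\partial_i f_\Omega(s)$ (up to the normalization factor of $2\pi$), and the linearized flow $T\Phi_t$ on $\C^n = \bigoplus_i \C$ preserves this splitting: on the $i$-th summand it is rotation by angle $t \cdot \partial_i f_\Omega(s)$, plus a shear term coming from the dependence of the rotation speed on $s$. The crucial point is that the shear term is a unipotent upper-triangular perturbation that does not affect the rotation number, so with respect to the tautological trivialization $\tau_{\on{std}}$ and any rotation quasimorphism $q$ (I would take $q$ additive on the block-diagonal maximal torus, e.g. the sum of the winding numbers of the diagonal entries), the Trivial Bundle formula gives
\[
\on{ru}(T\Phi|_{X_\Omega}, \tau_{\on{std}})(z) \;=\; \lim_{T\to\infty} \frac{q\circ \wt{\Phi}_T(z)}{T} \;=\; \sum_{i=1}^n \partial_i f_\Omega(\pi_1(z),\dots,\pi_n(z)).
\]
Here the Direct Sum property (b) reduces the rotation quasimorphism of the block-diagonal part to the sum over blocks, and in each $1$-dimensional block the rotation number of a uniform rotation at speed $\partial_i f_\Omega(s)$ is exactly $\partial_i f_\Omega(s)$.

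The final step is to integrate: $\on{Ru}(X_\Omega) = \int_{X_\Omega} \on{ru} \cdot \tfrac{\omega^n}{n!}$, and the pushforward of the Liouville measure $\tfrac{\omega^n}{n!}$ under the moment map $(\pi_1,\dots,\pi_n): X_\Omega \to \Omega$ is (with the standard normalization) Lebesgue measure $\on{dvol}_{\R^n}$ on $\Omega$. Since the integrand $\sum_i \partial_i f_\Omega$ is constant on the torus fibers, the integral collapses to $\sum_i \int_\Omega \partial_i f_\Omega \cdot \on{dvol}_{\R^n}$, as claimed. I expect the main technical obstacle to be the careful bookkeeping in the second step: verifying that the off-diagonal shear genuinely contributes nothing to the asymptotic rotation number (one must check that $\wt{\Phi}_T$ stays a bounded distance in $\wt{\Sp}(2n)$ from the diagonal torus part, uniformly, so that the quasimorphism defect does not accumulate — this uses that rotation quasimorphisms are quasimorphisms and that unipotent one-parameter subgroups are bounded), together with pinning down the normalization constants so that the $2\pi$ factors cancel and the moment-map pushforward is exactly $\on{dvol}_{\R^n}$ rather than a scalar multiple. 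A secondary point requiring care is the passage $E \to 0$, handled as in \cite[Lemma 3.5]{ce2021} via Gray stability, ensuring no Ruelle density mass concentrates near the skeleton.
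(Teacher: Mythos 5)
Your overall route is the same as the paper's: write the linearized flow in the toric coordinates as a product $T\Phi(t,z)=U(t,\mu(z))\,Q(t,z)$ of a diagonal unitary part and a shear, invoke the trivial-bundle description of the Ruelle density (Proposition \ref{prop:Ruelle_invariant}(d)) together with the quasimorphism property to split the rotation number into the two factors up to bounded error, read off $\sum_i \partial_i f_\Omega(\mu(z))$ from the diagonal part, argue the shear contributes nothing asymptotically, and push the fiberwise-constant density forward under the moment map. The only step where your justification does not hold up as stated is the crucial one you yourself flag: the claim that ``unipotent one-parameter subgroups are bounded'' (equivalently, that $\widetilde{\Phi}_T$ stays a bounded distance from the diagonal torus part) is false --- the shear $Q(t)=\on{Id}+tM$ is an unbounded path in $\on{Sp}(2n)$, and $U(t)Q(t)$ does not remain close to $U(t)$ in any norm sense. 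Boundedness of the group element is not what you need; what you need is that the rotation quasimorphism grows sublinearly (indeed stays bounded) along $\widetilde{Q}(t)$.

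The paper closes exactly this gap as follows: since $Q(t)$ is symplectic for all $t$ with $M$ independent of $t$, one gets $M^2=0$ (Lemma \ref{lem:symplectic_nilpotent}), so every eigenvalue of $Q(t)$ equals $1$; hence the eigenvalue quasimorphism $e$ of Example \ref{ex:eigenvalue_quasimorphism} satisfies $e(\widetilde{Q}(t))=0$ for all $t$, and since $e$ and the determinant quasimorphism $r$ are equivalent (bounded difference), $r(\widetilde{Q}(T))=O(1)$ and the limit of $r(\widetilde{Q}(T))/T$ vanishes. (Alternatively, note $Q(s)Q(t)=Q(s+t)$ because $M^2=0$, so $\widetilde{Q}$ is a genuine one-parameter subgroup and homogeneity of $\rho$ plus vanishing of $\rho$ on unipotents gives the same conclusion.) With that substitution your argument matches the paper's proof; your remaining bookkeeping points (the $2\pi$ normalization, the pushforward of Liouville measure being Lebesgue measure on $\Omega$, and the $E\to 0$ passage via Lemma \ref{lem:ruelle_liouville_domain}) are handled correctly.
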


\noindent We will provide a review of toric domains and their Reeb dynamics in \S \ref{sec:toric_formula}.
\end{example}

\subsection{Symplectic Convexity} \label{subsec:convexity} Our main application of the Ruelle invariant is to distinguish symplectically convex domains from dynamically convex domains. Let us recall the former concept. 

\begin{definition}\label{def:symplectic_convexity} A star-shaped domain $X$ is \emph{symplectically convex} if it is symplectomorphic to a convex star-shaped domain $X'$.
\end{definition}

\noindent Convex domains and their contact boundaries have many special properties that distinguish them from ordinary star-shaped domains and arbitrary contact forms on the sphere, particularly in dimension four (cf. \cite{hwz1998,v2000,ghr2020}).

\vspace{3pt}

In \cite{ce2021}, we demonstrated a new special property of the Ruelle invariant of convex star-shaped domains. To be precise, let $c(X)$ denote the period of the systole of $\partial X$, i.e.
\begin{equation}
c(X) := \on{min}\{T \; : \; T \text{ is the period of a closed Reeb orbit on }\partial X\}
\end{equation}

\begin{thm} \cite{ce2021} \label{thm:4d_ruelle_bound} There are constants $C,c > 0$ such that, for any convex star-shaped domain $X \subset \C^2$
\[c \cdot \on{vol}(X) \le \Ru(X) \cdot c(X) \le C \cdot \on{vol}(X)\]
\end{thm}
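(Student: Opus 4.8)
The plan is to reduce the statement to a two-sided control of the rotation speed of the linearized Reeb flow on $Y:=\partial X$, with convexity of $X$ entering through positivity of a Hessian. First I would make three reductions. By Lemma~\ref{lem:ruelle_liouville_domain}, $\Ru(X)=\Ru(Y,\lambda|_Y)$, and since $c_1(\xi)=0$ and $H^1(Y;\Z)=0$ we may fix a global trivialization $\tau$ of $\xi=\ker\lambda|_Y$; for a convex domain the natural choice is the constant trivialization inherited from $\C^2$. Both sides are homogeneous of the same degree under the dilations of $\C^2$, so we may normalize $c(X)=1$ and must then prove $c\cdot\vol{X}\le\Ru(X)\le C\cdot\vol{X}$. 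Writing $\mu:=\lambda|_Y\wedge d\lambda|_Y$, Stokes gives $\int_Y\mu=\int_X\omega\wedge\omega=2\vol{X}$, so the theorem becomes a two-sided bound on the $\mu$-average of the Ruelle density $\on{ru}(\Phi,\tau)$. The key structural input is convexity: choosing a convex defining Hamiltonian $H$ with $H^{-1}(1)=Y$ and $\nabla^2H\succeq0$ (a smoothed power of the Minkowski gauge), the linearized flow $d\Phi_t|_\xi$ along each Reeb orbit solves a linear Hamiltonian system whose generator is conjugate to $J\nabla^2H\succeq0$; hence in the trivialization $\tau$ the angular coordinate of every nonzero solution is nondecreasing in $t$. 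Two consequences: $\on{ru}(\Phi,\tau)\ge0$ pointwise, and by Theorem~\ref{thm:Ruelle_invariant_intro}(d) we have $\on{ru}(y)=\lim_{T\to\infty}\rho_T(y)/T$ where $\rho_T(y)$ is the genuinely monotone accumulated rotation of $d\Phi_t|_{\xi_y}$, $t\in[0,T]$, so $\Ru(X)=\lim_{T\to\infty}\tfrac1T\int_Y\rho_T\,\mu$ by monotone convergence.

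For the upper bound it suffices to prove a uniform estimate $\rho_T(y)\le C'T/c(X)+C'$; integrating then gives $\Ru(X)\le\tfrac{C'}{c(X)}\int_Y\mu=2C'\vol{X}$. This I would obtain by a Sturm-type comparison: each time $\rho_T$ advances by a full turn the linearized flow passes through a focal configuration, and a short orbit segment carrying a full turn would, through a shooting/fixed-point argument exploiting convexity of $H$, produce a closed Reeb orbit of period $<c(X)$ — contradicting the definition of the systole. Hence consecutive full turns are separated by Reeb time $\gtrsim c(X)$, which is exactly the asserted linear bound.

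For the lower bound a pointwise estimate is genuinely false — on a thin convex body the linearized flow barely rotates in the long directions — so the argument must be global. Using $\on{ru}\ge0$ and the ergodic decomposition of $\mu$, $\Ru(X)=\int_Y\on{ru}\,\mu$ is an average of rotation numbers of ergodic $\Phi$-invariant measures. Dynamical convexity of convex domains (Hofer–Wysocki–Zehnder: every closed Reeb orbit has $\CZ$ index $\ge3$, hence mean rotation number $\ge1$), propagated to arbitrary invariant measures by approximating them by periodic orbits and using continuity of the rotation number, forces these rotation numbers to be $\gtrsim1/c(X)$ on a set carrying a definite fraction of $\mu$. With $c(X)=1$ this yields $\int_Y\on{ru}\,\mu\ge c''\mu(Y)$, i.e.\ $\Ru(X)\ge c\,\vol{X}$.

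The main obstacle is the comparison step in the upper bound: converting the single scalar datum ``the systole has period $c(X)$'' into linear-in-$T$ control of the linearized rotation. For general star-shaped domains this is false — and so is the theorem — so convexity has to be used quantitatively, via the Riccati equation of the linearized flow together with a fixed-point argument that manufactures a short closed orbit out of fast rotation. The lower bound is softer, but carries its own pitfall: one must resist looking for a pointwise rotation estimate and instead argue entirely in the integrated, measure-theoretic setting, extracting the benefit of dynamical convexity only after the ergodic decomposition.
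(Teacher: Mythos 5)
Your preliminary reductions (Lemma \ref{lem:ruelle_liouville_domain}, scale-invariance of $\Ru(X)\cdot c(X)/\on{vol}(X)$, Stokes to rewrite $\on{vol}(X)$ as $\tfrac12\int_Y\lambda\wedge d\lambda$, and positivity of $\on{ru}$ from the positive semi-definite Hessian generator) are all sound and consistent with the paper's setup. But both of your main steps have genuine gaps. The theorem is quoted here from \cite{ce2021}, and the argument there (mirrored by the higher-dimensional proof in \S\ref{sec:Ruelle_invariant_of_convex_domains}) does \emph{not} prove the upper bound by converting fast linearized rotation into short closed orbits. It proceeds via the trace estimate $\Ru(X)\le \frac{8n^2}{\pi}\int_X \on{tr}(\nabla^2H_X)\,\omega^n$ (Proposition \ref{prop:trace_estimate_r} and Lemma \ref{lem:trace_bound_Ru}; in \cite{ce2021} the total mean curvature of $\partial X$ plays this role), a monotonicity/sandwiching estimate for that curvature functional (Proposition \ref{prop:sandwich_estimate}), John's ellipsoid theorem, and an explicit ellipsoid computation (Lemma \ref{lem:ellipsoid_quantities}). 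Your substitute --- ``a full turn of the linearized flow over a time window shorter than $c(X)$ produces, by a shooting/fixed-point argument, a closed Reeb orbit of period $<c(X)$'' --- is the entire content of the upper bound, and no such argument is given or known: fast rotation of the \emph{linearization along a single trajectory} does not manufacture a periodic orbit of the nonlinear flow, and the claim is essentially an unproved converse to dynamical convexity. You correctly flag this as the main obstacle, but it is not an obstacle the paper overcomes by your route; it is replaced wholesale by the trace-plus-John-ellipsoid machinery.

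The lower bound is also broken, quantitatively. Dynamical convexity gives $\CZ(\gamma)\ge 3$, hence total rotation $>1$ over one period $T$ of a closed orbit, i.e.\ rotation \emph{rate} $>1/T$ --- not $\ge 1/c(X)$. Since $c(X)$ is the \emph{minimum} period, orbits of arbitrarily large period are permitted and have arbitrarily small rotation rate, so ``rotation numbers $\gtrsim 1/c(X)$ on a set carrying a definite fraction of $\mu$'' does not follow. The propagation step is also unjustified: invariant measures of a general Reeb flow need not be weak-$*$ limits of periodic-orbit measures (no closing lemma is available), and rotation numbers of ergodic measures are not controlled by those of nearby periodic orbits without further hypotheses. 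The actual lower bound in \cite{ce2021} is a pointwise geometric estimate bounding the linearized rotation from below by an extrinsic curvature quantity, followed by convex-geometric reasoning that the authors describe as ``delicate'' and ``specific to $\C^2$''; indeed they state after Theorem \ref{thm:main_inequality} that they do not know how to extend it to higher dimensions. Your argument, were it correct, would be dimension-independent --- a strong signal that an essential ingredient is missing.
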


\noindent Our second main result in this paper is the generalization of the upper bound in Theorem \ref{thm:4d_ruelle_bound}.

\begin{thm} \label{thm:main_inequality} There is a constant $C(n) > 0$ such that any convex star-shaped domain $X \subset \C^n$ satisfies
\[\Ru(X) \cdot c(X) \le C(n) \cdot \on{vol}(X)\]
\end{thm}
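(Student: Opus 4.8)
The plan is to reduce Theorem \ref{thm:main_inequality} to the toric case via a normalization/reduction argument, and then prove the toric inequality by hand using the explicit formula of Proposition \ref{prop:toric_ruelle} together with a volume estimate on the moment region $\Omega$. The crucial structural input is that all three quantities $\Ru(X)$, $c(X)$ and $\vol{X}$ behave controllably under the operations available to us. First I would record the scaling behavior: under $X \mapsto \lambda X$ one has $\vol{\lambda X} = \lambda^{2n}\vol{X}$, $c(\lambda X) = \lambda^2 c(X)$, and $\Ru(\lambda X) = \lambda^{2n-2}\Ru(X)$ (the last from the toric formula, or from the cocycle definition, since rescaling the domain rescales the Reeb period but not the linearized flow up to conjugation). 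Hence the product $\Ru(X)\cdot c(X)$ and $\vol{X}$ both scale like $\lambda^{2n}$, so the inequality is scale-invariant and I may normalize, say, $c(X) = 1$, i.e. assume the shortest closed Reeb orbit on $\partial X$ has period exactly $1$.

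Next I would pass from a general convex domain to a toric one. The standard trick (used in \cite{dgz2021} and going back to estimates on convex domains) is: any convex domain $X \subset \C^n$ with $c(X) \ge 1$ is contained, after an orthogonal change of coordinates, in a toric domain $X_\Omega$ whose moment region $\Omega$ is controlled — more precisely, the systole bound forces each ``width'' of $X$ in a symplectically orthogonal pair of directions to be bounded below, which translates into $\Omega$ containing a fixed simplex scaled by the systole, and convexity bounds $\Omega$ from above in terms of $\vol{X}$. Then I would use monotonicity: $\Ru$ is \emph{not} obviously monotone, so instead of bounding $\Ru(X)$ by $\Ru(X_\Omega)$ directly, I would argue that the linearized Reeb flow on $\partial X$ is, in a suitable averaged sense, dominated by a ``diagonal'' rotation whose total rotation rate is controlled by the $n$ widths; quantitatively, the Ruelle density at a point is the sum of $n$ rotation numbers (by the Direct Sum property (b) of Theorem \ref{thm:Ruelle_invariant_intro} applied after diagonalizing the transverse flow), each bounded by a universal constant times the reciprocal of the corresponding width, and integrating against the Reeb measure of total mass $\vol{X}$ (up to a dimensional constant) gives $\Ru(X) \le C(n)\sum_i (1/w_i)\cdot$ something. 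Combining with $c(X) \le C'(n)\prod_i w_i^{?}$ — the systole being bounded by the smallest width — yields the claim. Alternatively, and more cleanly, I would first prove the toric inequality $\Ru(X_\Omega)\cdot c(X_\Omega) \le C(n)\vol{X_\Omega}$ outright: by Proposition \ref{prop:toric_ruelle}, $\Ru(X_\Omega) = \sum_i \int_\Omega \partial_i f_\Omega\,\dvol$, and by homogeneity of $f_\Omega$ the integrand is comparable to $f_\Omega/\,\min_i x_i$-type expressions, while $c(X_\Omega)$ equals the minimum over $i$ of the ``axis periods'' $2\pi$ times the $i$-th intercept of $\Omega$, and $\vol{X_\Omega} = n!\,\vol{\Omega}$ up to the standard $(2\pi)$-factors; the resulting inequality is then a purely measure-theoretic statement about convex bodies $\Omega$ in the positive orthant with $f_\Omega$-level set $\Omega$, provable by slicing $\Omega$ along the coordinate in which the intercept is smallest.

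Then I would transfer the toric bound back to general convex $X$ using Lemma \ref{lem:ruelle_liouville_domain} (symplectic invariance of $\Ru$) and the fact that $c$ and $\vol$ are symplectic invariants: it suffices to show every convex $X$ admits a symplectomorphic copy that is ``toric-comparable,'' or more robustly, that the chain of inequalities above for the diagonalized transverse flow never actually used toricity, only convexity plus the pointwise bound on rotation numbers coming from the second fundamental form of $\partial X$ being positive semidefinite. Concretely, at a point $y \in \partial X$ the linearized Reeb flow restricted to $\xi_y$ is generated by an infinitesimally symplectic endomorphism whose ``rotation speed'' is controlled by the shape operator of $\partial X$ at $y$ (this is the convex geometry input, as in \cite{ce2021}); convexity makes this operator sign-definite, which caps the rotation quasimorphism growth and hence, via property (d) of Theorem \ref{thm:Ruelle_invariant_intro}, caps $\on{ru}$. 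Integrating this pointwise cap against $\alpha\wedge d\alpha^{n-1}$ and relating $\int_{\partial X}\alpha\wedge d\alpha^{n-1}$ to $\vol{X}$ and $c(X)$ by the isoperimetric-type inequalities for convex domains (the ones underlying the Viterbo conjecture circle of ideas, but here only in the easy direction) completes the argument.

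\textbf{Main obstacle.} The hard part will be the convex-geometry step: extracting a genuinely \emph{uniform} (in $n$, and over all convex $X$) pointwise bound on the Ruelle density in terms of the systole and local curvature, and then controlling the error when the transverse linearized flow is not literally diagonal. In dimension four \cite{ce2021} this was handled by an explicit normal-form analysis of $\mathrm{Sp}(2)$-valued cocycles and a linking interpretation that is genuinely two-dimensional; in higher dimensions one must replace linking by the asymptotic Maslov/rotation data and argue that the ``worst case'' rotation rate is still governed by the narrowest symplectic cross-section, which requires a careful choice of trivialization $\tau$ and an averaging argument to kill cross-terms between the $n$ symplectic $2$-planes. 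I expect the cleanest route is in fact to do everything on the toric model first (where Proposition \ref{prop:toric_ruelle} makes all three quantities completely explicit and the inequality becomes elementary calculus on $\Omega$) and then prove a \emph{comparison lemma} stating that for the purpose of this inequality a convex $X$ may be replaced by a toric domain without loss — this comparison, not the toric computation, is where the real work lies.
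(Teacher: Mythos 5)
Your proposal correctly identifies two true ingredients --- the scale invariance of the inequality, and the fact that convexity caps the rotation rate of the linearized flow via positive semi-definiteness of the generator (this is exactly the paper's trace estimate, Proposition \ref{prop:trace_estimate_r} and Lemma \ref{lem:trace_bound_Ru}, giving $\Ru(X) \le \tfrac{8n^2}{\pi} n!\, S(H_X)$ with $S(H_X) = \int_X \Delta H_X \cdot \mu_g$). But the argument has a genuine gap at its center, and you name it yourself: the ``comparison lemma'' replacing a general convex $X$ by a toric model is never supplied, and the mechanisms you float for it do not work. The Ruelle invariant is not monotone under inclusion, and the Direct Sum property (b) cannot be ``applied after diagonalizing the transverse flow'': that property requires an honest $\Phi$-invariant splitting of the symplectic bundle into subcocycles, which a generic convex hypersurface does not admit, so the Ruelle density is not a sum of $n$ independent rotation numbers controlled by widths. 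Likewise, the proposed endgame --- integrating the pointwise cap against $\alpha\wedge d\alpha^{n-1}$ and invoking ``isoperimetric-type inequalities'' --- cannot close the argument: $\int_{\partial X}\alpha\wedge d\alpha^{n-1}$ equals $2n\cdot\vol{X}$ by Stokes and carries no information about $c(X)$, whereas the whole difficulty is that $S(H_X)$ blows up for thin domains exactly as $c(X)$ shrinks, and one must make that cancellation quantitative.

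The paper closes this gap not by a toric reduction but by an ellipsoid reduction. After the trace bound, it proves a sandwich estimate (Proposition \ref{prop:sandwich_estimate}): if $G \le H \le L\cdot G$ are homogeneous of degree one with positive semi-definite Hessians, then $S(H) \le \exp(\tfrac12 L d^2)\, S(G)$; this is obtained from a variational formula for $S$ along the linear interpolation $F_t = (1-t)H + tG$, using Lemma \ref{lem:V_linear_pos_def} to control the boundary terms. John's ellipsoid theorem (in the symplectic form of Lemma \ref{lem:John_ellipsoid}) sandwiches $X$ between $\tfrac{1}{2n}E$ and $E$ for a standard ellipsoid $E$, hence $H_E \le H_X \le (2n)^2 H_E$, hence $S(H_X) \le \exp(8n^4)\, S(H_E)$; and for standard ellipsoids everything is explicit, $c(E)\cdot S(H_E) \le 4\pi n\cdot \vol{E}$ (Lemma \ref{lem:ellipsoid_quantities}), with $c(X) \le c(E)$ and $\vol{E} \le (2n)^{2n}\vol{X}$ by monotonicity of the systole on convex domains and of volume. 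If you want to salvage your outline, replace the toric comparison by this ellipsoid comparison: the sandwich estimate for the Laplacian functional is the missing lemma that makes the reduction legitimate, and it uses convexity of $H_X$ (not toricity) in an essential way.
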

\noindent Let us briefly sketch the proof, which is strategically similar to the proof of Theorem \ref{thm:4d_ruelle_bound} in \cite{ce2021}.

\vspace{3pt}

\emph{Proof Sketch.} We start by observing that the tangent cocycle $T\Phi$ induced by the Hamiltonian flow $\Phi$ of $H_X$ is generated by the Hessian of $H_X$, in the sense that
\[\frac{d}{dt}(T\Phi)(t,x) = J\circ A(\Phi(t,x)) \circ T\Phi(t,x) \qquad\text{where}\qquad A(x) = \nabla^2 H_X(t,x)\]
General properties of the rotation quasimorphism (see \S \ref{subsec:rotation_quasimorphisms}) imply a trace estimate for the Ruelle invariant (see Proposition \ref{prop:Ruelle_invariant}(e)) when the generator $A$ is positive semi-definite, which is the case if $X$ is convex. Thus we get
\[
\on{Ru}(X) \le \frac{8n^2}{\pi} \cdot S(H_X) \qquad\text{where}\qquad S(H_X) := \int_X \on{tr}(A) \cdot \omega^n = \int_X \Delta H_X \cdot \omega^n
\]
By analyzing the functional $S$, we prove (Proposition \ref{prop:sandwich_estimate}) that if $X$ and $W$ are sandwiched, in the sense that $W \subset X \subset c \cdot W$ for some constant $c \ge 1$, then
\[S(H_X) \le C(c,n) \cdot S(H_W) \qquad\text{where $C(c,n)$ depends only on $n$ and $c$}\]
On the other hand, by the John ellipsoid theorem, we can find a standard symplectic ellipsoid $E$ such that $E \subset X \subset 2n \cdot E$ (after applying a symplectomorphism to $X$). For this ellipsoid, we have
\[
c(X) \le c(2n \cdot E) \qquad \on{Ru}(X) \le C'(n) \cdot S(H_{2n \cdot E}) \qquad \on{vol}(2n \cdot E) \le (2n)^{2n} \cdot \on{vol}(X)
\]
This reduces the proof to the statement that $S(H_E) \cdot c(E) \le C''(n) \cdot \on{vol}(E)$ for any standard ellipsoid $E$ and a constant $C''(n)$ depending on $n$. This is a simple calculation (Lemma \ref{lem:ellipsoid_quantities}). \qed

\vspace{3pt}

\noindent We will carry our a detailed version of this proof (keeping track of constants) in \S \ref{subsec:proof_of_main_inequality}.

\begin{remark} Our proofs of Theorem \ref{thm:main_inequality} above and Theorem \ref{thm:4d_ruelle_bound} in \cite{ce2021} are very similar. 

\vspace{3pt}

The key difference is our use of the Laplacian integral in place of the total mean curvature of the contact boundary, which plays an almost identical role in \cite{ce2021}. A higher dimensional bound by some extrinsic curvature integral (cf. \cite[Lemma 3.11]{ce2021}) would, likely, further simplify and improve the proof of Theorem \ref{thm:main_inequality}. At this time, we do not have a construction of the Ruelle invariant in higher dimensions that makes such a bound manifest.

\vspace{3pt}

Relatedly, the lower bound in Theorem \ref{thm:4d_ruelle_bound} is likely true in higher dimensions. However, at this time, it is not clear how to adapt our methods to prove it. The proof in \cite{ce2021} relies on some delicate geometric reasoning specific to $\C^2$. 
\end{remark}

\begin{remark} \label{rmk:main_thm_constant} The constant $C(n)$ in Theorem \ref{thm:main_inequality} can be given explicitly as
\[C(n) := 2^{2n+5} \cdot n^{2n+3} \cdot \on{exp}(8n^4)\]
We believe that this constant is far from optimal. In fact, in dimension 4, it is an inferior constant to the one produced in the upper bound of Theorem \ref{thm:4d_ruelle_bound} in \cite{ce2021}. \end{remark}

\subsection{Dynamical Convexity} Symplectic convexity is a mysterious and fundamentally extrisic condition that nonetheless plays a fundamental role in the symplectic geometry of star-shaped domains. One is thus drawn naturally to the following problem.

\begin{problem} \label{problem:what_is_convexity} Give a characterization of symplectic convexity in terms of symplectomorphism invariant properties, i.e. without referencing an embedding to $\C^n$.\end{problem}

A prominent candidate criterion to resolve Problem \ref{problem:what_is_convexity} was introduced by Hofer-Wysocki-Zehnder in their groundbreaking paper \cite{hwz1998}. This characterization uses the lower-semicontinuous extension $\LCZ$ (see \S \ref{subsec:CZ_index}) of the Conley-Zehnder index $\CZ$, which can be viewed as a sort of Floer-theoretic Morse index of a closed Reeb orbit.

\begin{definition} A contact form on $\alpha$ on $S^{2n-1}$ is \emph{dynamically convex} if
\[\LCZ(\gamma) \ge n+1 \qquad\text{for every closed Reeb orbit }\gamma\text{ of }\alpha\]
Likewise, a star-shaped domain $(X,\lambda)$ is dynamically convex if $(\partial X,\lambda|_{\partial X})$ is. \end{definition}

\noindent Since \cite{hwz1998}, dynamical convexity has been used as a key hypothesis for many results in symplectic geometry (cf. \cite{z2019a,z2019b,am2014,am2015,gg2016,fvk2018,hn2014}). It is simple to check that every strictly positively curved convex domain is dynamically convex, but the converse has been open for more than 20 years.

\begin{question} \label{qu:dynamical_convex_is_convex} Is every dynamically convex contact form on $S^{2n-1}$ also convex?
\end{question} 

\noindent In dimension four, we resolved this problem in \cite{ce2021} by constructing examples of dynamically convex contact manifolds violating both bounds in Theorem \ref{thm:4d_ruelle_bound}.

\vspace{3pt}

There was substantial evidence prior to \cite{ce2021} that the answer to Question \ref{qu:dynamical_convex_is_convex} is no. For example, Abbondandolo-Bramham-Hryniewicz-Salom\~{a}o proved in \cite{abhs2} that the weak Viterbo conjecture fails for dynamically convex domains.  There is substantial evidence for the latter conjecture, especially in dimension four \cite{ch2020}, so the contact forms in \cite{abhs2} are likely not convex. 

\vspace{3pt}

In higher dimensions, Ginzburg-Macarini \cite{gm2020} constructed examples of dynamically convex contact forms admitting an action of a finite group $G$ that were not $G$-equivariantly isomorphic to a convex boundary with a similar $G$-action. However, their methods only apply when $G$ is non-trivial, and thus do not answer Question \ref{qu:dynamical_convex_is_convex}.

\vspace{3pt}

Theorem \ref{thm:main_inequality} can be used to resolve Question \ref{qu:dynamical_convex_is_convex} in any dimension. In fact, using the results in \cite{ce2021}, Dardennes-Gutt-Zhang \cite{dgz2021} introduced an elegant toric construction of non-convex, dynamically convex domains in $\C^2$ that is much simpler than the open book construction in \cite{ce2021}. Using a straight forward adaptation of their operation, we prove the following result.

\begin{prop} (Proposition \ref{prop:toric_counter_examples}) \label{prop:toric_counter_examples_intro} Let $X_\Omega$ be a star-shaped, concave toric domain. Then for any $C,\epsilon > 0$, there is a smooth, star-shaped, concave moment region
\[\hat{\Omega} \supset \Omega\]
that satisfies the following properties
\[\on{vol}(X_\Omega) \le \on{vol}(X_{\hat{\Omega}}) \le \on{vol}(X_\Omega) + \epsilon \qquad \on{Ru}(X_{\hat{\Omega}}) \ge C \qquad c(X_\Omega) \le c(X_{\hat{\Omega}})\]
\end{prop}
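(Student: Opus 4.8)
The plan is to realize $\hat\Omega$ as the union of $\Omega$ with a long, thin ``tentacle'' attached along the boundary, chosen so that the added Euclidean volume is at most $\epsilon$ while the tentacle forces the Ruelle invariant to be large. This mirrors the Dardennes--Gutt--Zhang operation in $\C^2$, so I would first set up the toric dictionary from \S\ref{sec:toric_formula}: a concave toric domain corresponds to a concave moment region, and by the formula $\on{Ru}(X_\Omega) = \sum_i \int_\Omega \partial_i f_\Omega \cdot \on{dvol}_{\R^n}$ the Ruelle invariant is an integral over $\Omega$ of the (positive) partial derivatives of the defining function $f_\Omega$. The key observation is that $\partial_i f_\Omega$ blows up near the part of $\partial\Omega$ lying on a coordinate hyperplane $\{x_i = 0\}$: there $f_\Omega$ must increase from $0$ to $1$ over a region whose $x_i$-extent is small if the region is thin in that direction. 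So a region that is very thin in the $x_i$-direction but long in the remaining directions contributes a large amount to the $i$-th summand of $\on{Ru}$ while contributing little volume.

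Concretely, I would fix an index, say $i=1$, and a small parameter $\delta>0$, and append to $\Omega$ a slab-like region $S_\delta$ of the form (roughly) $\{0 \le x_1 \le \delta,\ (x_2,\dots,x_n) \in R_\delta\}$ where $R_\delta$ is a large box in the remaining coordinates, glued smoothly onto $\Omega$ near a boundary point on $\{x_1=0\}$ and rounded off so that $\hat\Omega$ remains smooth, concave and star-shaped. The smoothing is the fiddly but routine part: one needs to check that concavity is preserved (the obstruction to keeping $X_{\hat\Omega}$ concave is exactly this) and that $0$ remains in the interior, using that $\Omega$ was already concave and star-shaped so the union with a convex-ish slab attached along the boundary can be smoothed without destroying concavity. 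The volume bound $\on{vol}(X_{\hat\Omega}) \le \on{vol}(X_\Omega) + \epsilon$ follows because $\on{vol}(X_{\hat\Omega}) = n!\,\on{vol}_{\R^n}(\hat\Omega)$ (up to the usual constant from the toric correspondence) and $\on{vol}_{\R^n}(S_\delta) = \delta \cdot \on{vol}_{\R^{n-1}}(R_\delta) \to 0$ if we let $\on{vol}_{\R^{n-1}}(R_\delta)$ grow, say, like $\delta^{-1/2}$; the monotonicity $\on{vol}(X_\Omega) \le \on{vol}(X_{\hat\Omega})$ and $c(X_\Omega) \le c(X_{\hat\Omega})$ are immediate from inclusion $\Omega \subset \hat\Omega$, since enlarging a concave toric domain only lengthens the systole (the shortest Reeb orbit of the smaller domain persists, and no shorter one is created — this should be quoted from the toric Reeb dynamics review, or proved directly from the combinatorial description of closed Reeb orbits on $\partial X_\Omega$).

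The heart of the argument is the estimate $\on{Ru}(X_{\hat\Omega}) \ge C$. Here I would lower bound $\on{Ru}(X_{\hat\Omega}) \ge \int_{S_\delta} \partial_1 f_{\hat\Omega} \cdot \on{dvol}_{\R^n}$ (dropping the other nonnegative summands and the contribution of $\Omega$). On the slab $S_\delta$, the defining function $f_{\hat\Omega}$ satisfies $\sum_j x_j \partial_j f_{\hat\Omega} = f_{\hat\Omega} \le 1$ and $f_{\hat\Omega} = 1$ on the far boundary $\{x_1 = \delta(\dots)\}$ of the slab, so along the $x_1$-direction $f_{\hat\Omega}$ must grow by a definite amount over a tiny interval; Fubini over the $(x_2,\dots,x_n)$-fibers then gives $\int_{S_\delta}\partial_1 f_{\hat\Omega} \gtrsim \on{vol}_{\R^{n-1}}(R_\delta)$, which we arranged to be as large as we like. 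Choosing $\on{vol}_{\R^{n-1}}(R_\delta)$ to exceed $C$ and $\delta$ small enough that $\delta\cdot\on{vol}_{\R^{n-1}}(R_\delta) < \epsilon$ simultaneously (e.g. $\on{vol}_{\R^{n-1}}(R_\delta)= \max\{C,\delta^{-1/2}\}$ with $\delta\to 0$) gives both conclusions.

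I expect the main obstacle to be the smoothing step: ensuring that $\hat\Omega$ can be made a genuinely smooth, concave, star-shaped moment region while still containing all of $\Omega$, with the Ruelle lower bound surviving the rounding near the corner where the slab meets $\Omega$ and near the far end of the slab. One has to make sure the rounding happens on a scale much smaller than $\delta$ and much smaller than $\on{vol}_{\R^{n-1}}(R_\delta)^{1/(n-1)}$ so that a positive fraction of $S_\delta$ still has the ``thin in $x_1$'' geometry driving the lower bound, and so that concavity (a convexity condition on the complement, hence delicate to preserve under gluing) is not violated. A clean way to organize this is to first build a continuous, piecewise-smooth $\hat\Omega$, verify all four estimates with room to spare, and then invoke a general smoothing lemma for concave toric domains — which, given the role played by the analogous construction in \cite{dgz2021}, should already be available in the toric section \S\ref{sec:toric_formula} or provable by a standard mollification that shrinks the domain by an arbitrarily small amount in a controlled way.
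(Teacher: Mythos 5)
Your overall strategy is the right one and is essentially the paper's (both generalize the Dardennes--Gutt--Zhang strain): graft onto $\Omega$ a region that is very thin in the $x_1$-direction and long in the remaining directions, so that $\partial_1 f$ is of order $(\text{thickness})^{-1}$ there and the Ruelle integral $\sum_i \int \partial_i f$ blows up while the added Euclidean volume stays below $\epsilon$; the bounds $\on{vol}(X_\Omega)\le\on{vol}(X_{\hat\Omega})$ and $c(X_\Omega)\le c(X_{\hat\Omega})$ come from inclusion, the latter via the Gutt--Hutchings bracket formula for concave domains (Lemma \ref{lem:min_period_concave}). Your Fubini lower bound is also essentially correct, with the minor caveat that $1-f(0,y)$ is only bounded below on the inner portion of the slab's cross-section (near the far edge $f(0,y)$ is close to $1$), which costs a harmless dimensional factor.

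The genuine gap is in the concavity step, and it is not a smoothing technicality. Concavity of $\hat\Omega$ means $[0,\infty)^n\setminus\hat\Omega$ is convex, and the complement of a box $[0,\delta]\times R_\delta$ in the orthant is a union of half-spaces, which is not convex: already for $n=2$ the points $(0,L+1)$ and $(\delta+\eta,L/2)$ lie outside $\Omega\cup S_\delta$ while the segment joining them passes through the slab. This defect lives at scale $L$, so no rounding at a scale $\ll\delta$ repairs it. Indeed, if $\hat\Omega\supset \Omega\cup S_\delta$ is concave, its (convex) complement $K$ is disjoint from the compact convex box, hence separated from it by a hyperplane $\{h=c\}$, hence $\hat\Omega$ must contain the entire region $\{h<c\}\cap[0,\infty)^n$ --- a simplex containing the box. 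The minimal volume of such a simplex is about $\tfrac{n^n}{n!}\,\delta L^{n-1}$, so the proof survives, but the region you are forced to add is precisely a ``flat ellipsoid'' moment region $\Delta=\{A^n x_1 + A^{-1}(x_2+\dots+x_n)\le 1\}$, which is what the paper uses: its complement in the orthant is a half-space intersected with the orthant, so $\Omega\cup\Delta$ has convex complement (an intersection of two convex sets) and is concave on the nose, with only the crease where $\partial\Omega$ meets $\partial\Delta$ needing smoothing. Replace the box by such a simplex from the outset and your argument closes; the Ruelle lower bound even simplifies, since $f_\Delta$ is linear and $\sum_i\partial_i f_\Delta$ equals the explicit constant $A^n+(n-1)A^{-1}$.
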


\noindent Smooth concave toric domains are examples of strictly monotone toric domains (see Definition \ref{def:strictly_monotone}), which are all dynamically convex (Proposition \ref{prop:strictly_monotone_implies_dynamically_convex}). Therefore, Proposition \ref{prop:toric_counter_examples} resolves Question \ref{qu:dynamical_convex_is_convex} as it implies the following corollary.

\begin{cor} There are dynamically convex contact forms on $S^{2n-1}$ that are not symplectically convex. 
\end{cor}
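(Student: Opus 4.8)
The plan is to deduce the corollary from Proposition~\ref{prop:toric_counter_examples_intro} combined with Theorem~\ref{thm:main_inequality}. First I would fix $n\ge 2$ and pick any convenient star-shaped, concave toric domain $X_\Omega\subset\C^n$ to serve as a seed — for instance the standard ball $B^{2n}$, whose moment region is the standard simplex. This gives finite baseline values $\on{vol}(X_\Omega)$ and $c(X_\Omega)>0$. Let $C(n)$ be the explicit constant from Theorem~\ref{thm:main_inequality} (see Remark~\ref{rmk:main_thm_constant}).

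Next I would apply Proposition~\ref{prop:toric_counter_examples_intro} with $\epsilon := 1$ (anything finite works) and with the threshold
\[
C := \frac{C(n)\cdot\big(\on{vol}(X_\Omega)+1\big)}{c(X_\Omega)} + 1.
\]
This produces a smooth, star-shaped, concave moment region $\hat\Omega\supset\Omega$ with
\[
\on{vol}(X_{\hat\Omega})\le \on{vol}(X_\Omega)+1,\qquad \on{Ru}(X_{\hat\Omega})\ge C,\qquad c(X_{\hat\Omega})\ge c(X_\Omega).
\]
Since $X_{\hat\Omega}$ is a smooth concave toric domain, it is strictly monotone (Definition~\ref{def:strictly_monotone}), hence dynamically convex by Proposition~\ref{prop:strictly_monotone_implies_dynamically_convex}; equivalently $(S^{2n-1},\alpha)$ is dynamically convex, where $\alpha$ is the contact form on $S^{2n-1}$ obtained by pulling back $\lambda|_{\partial X_{\hat\Omega}}$ along a diffeomorphism $S^{2n-1}\cong\partial X_{\hat\Omega}$.

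Finally I would argue by contradiction that $\alpha$ is not symplectically convex. If it were, then by Definition~\ref{def:symplectic_convexity} the domain $X_{\hat\Omega}$ would be symplectomorphic to some convex star-shaped domain $X'\subset\C^n$. By the symplectomorphism invariance in Lemma~\ref{lem:ruelle_liouville_domain} (noting $H^1(X_{\hat\Omega};\Z)=H^2(X_{\hat\Omega};\Z)=0$ since $X_{\hat\Omega}$ is star-shaped, hence contractible) we would have $\on{Ru}(X')=\on{Ru}(X_{\hat\Omega})\ge C$; moreover $\on{vol}(X')=\on{vol}(X_{\hat\Omega})$ and $c(X')=c(X_{\hat\Omega})$ are symplectic invariants (volume via $\omega^n$, the systole via the contact boundary). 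Applying Theorem~\ref{thm:main_inequality} to the convex domain $X'$ gives
\[
C\cdot c(X_\Omega)\le \on{Ru}(X')\cdot c(X')\le C(n)\cdot\on{vol}(X')\le C(n)\cdot\big(\on{vol}(X_\Omega)+1\big),
\]
so $C\le C(n)(\on{vol}(X_\Omega)+1)/c(X_\Omega)$, contradicting the choice of $C$. Hence $\alpha$ is dynamically convex but not symplectically convex, proving the corollary. The only subtlety — and the step I would be most careful about — is making sure the hypotheses of Theorem~\ref{thm:main_inequality} and Lemma~\ref{lem:ruelle_liouville_domain} genuinely apply to $X'$ (smoothness and the topological vanishing conditions) and that $c(X)$, $\on{vol}(X)$, $\on{Ru}(X)$ are all literally invariant under the symplectomorphism in question; everything else is bookkeeping with the explicit constant.
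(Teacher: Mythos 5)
Your argument is correct and is exactly the deduction the paper intends (the paper leaves it implicit): feed a concave toric seed into Proposition~\ref{prop:toric_counter_examples_intro} with the threshold $C$ calibrated against the constant of Theorem~\ref{thm:main_inequality}, note the output is dynamically convex by Proposition~\ref{prop:strictly_monotone_implies_dynamically_convex}, and rule out symplectic convexity by contradiction using the symplectomorphism invariance of $\on{Ru}$, $c$, and volume. The subtlety you flag about invariance of $c$ under the symplectomorphism is handled by the same Gray-stability argument underlying Lemma~\ref{lem:ruelle_liouville_domain}, so nothing is missing.
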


\subsection*{Outline} This concludes the introduction {\bf \S \ref{sec:introduction}}. The rest of the paper is organized as follows.

\vspace{3pt}

In {\bf \S \ref{sec:symplectic_linear_algebra}}, we discuss preliminaries from symplectic linear algebra: the polar decomposition (\S \ref{subsec:polar_decomposition}), the rotation quasimorphism (\S \ref{subsec:rotation_quasimorphisms}) and Conley-Zehnder indices (\S \ref{subsec:CZ_index}-\ref{subsec:indices_of_orbits}).

\vspace{3pt}

In {\bf \S \ref{sec:Ruelle_density}}, we carry out the construction of the Ruelle invariant in detail. We start by discussing the construction of the rotation function via sub-additive ergodic theory (\S \ref{subsec:rotation_function}). Then we construct the Ruelle invariant and demonstrate its properties (\S \ref{subsec:construction_of_ruelle}). 

\vspace{3pt}

In {\bf \S \ref{sec:Ruelle_invariant_of_convex_domains}}, we prove the main estimate on the Ruelle invariant. The begin with some preliminary estimates from Riemannian geometry (\S \ref{subsec:linear_tensor_fields}-\ref{subsec:laplacian_functional}). We then review some properties of ellipsoids (\S \ref{subsec:ellipsoids}) before proceeding to the main proof (\S \ref{subsec:proof_of_main_inequality}).

\vspace{3pt}

In {\bf \S \ref{sec:toric_formula}}, we construct our toric counter-examples. First, we introduce toric domains and prove a Ruelle invariant formula (\S \ref{subsec:toric_domains}). We then discuss monotone (\S \ref{subsec:monotone_domains}) and concave (\S \ref{subsec:concave_domains}) toric domains. Finally, we construct our counter-example in the last section of the paper (\S \ref{subsec:counterexamples}). 

\subsection*{Acknowledgements} We would like to thank Lior Alon for helpful conversation. JC was supported by the National Science Foundation under Award No. 2103165. 

\section{Symplectic Linear Algebra} \label{sec:symplectic_linear_algebra} In this section, we review background topics from symplectic linear algebra that will be required later in the paper.

\vspace{3pt}

 Specifically, we discuss polar decompositions and rotation quasimorphisms, which are key ingredients in our construction of the Ruelle invariant in \S \ref{sec:Ruelle_density}. We also discuss variants and properties of the Conley-Zehnder index, which will be needed in \S \ref{sec:toric_formula}.

\subsection{Polar Decomposition} \label{subsec:polar_decomposition} Recall that every matrix $A\in\on{GL}(n,\R)$ admits a unique \emph{polar decomposition} into a product $A=UP$ where $U$ is orthogonal and $P$ is symmetric positive definite. 

\vspace{3pt}

We can view the polar decomposition as pair of smooth maps between spaces of matrices.
\begin{equation}
\Phi = (U,P):\on{GL}(n) \to \on{O}(n) \times \on{P}(n) 
\end{equation}
Here $\on{O}(n)$ and $\on{P}(n)$ are, respectively, the spaces of orthogonal matrices and symmetric matrices.
\[
\on{O}(n) = \{A \in \on{GL}(n) \; : \; AA^T = \on{Id}\} \qquad\text{with Lie algebra}\qquad \mathfrak{o}(n) = \{A \in \mathfrak{gl}(2n) \; : \; A + A^T = 0 \}
\]
\[
\on{P}(n) := \{A \in \on{GL}(n) \; : \; A = A^T\} \qquad \text{with tangent space}\qquad \mathfrak{p}(n) := \{A \in \mathfrak{gl}(n) \; : \; A = A^T\}
\]
We will need an explicit integral expression for the derivative of the polar decomposition. 

\begin{lemma} \label{lem:polar_derivative} The differential $TU:T\on{GL}(n) \to T\On(n)$ of the map $U:\on{GL}(n) \to \On(n)$ is given by
\[TU_A(UB) = U \cdot \int_0^\infty e^{-sP}(B - B^T)e^{-sP} ds \quad\text{at}\quad A = UP\]
\end{lemma}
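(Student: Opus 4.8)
The plan is to differentiate the defining relations of the polar decomposition and solve the resulting Sylvester-type equation using the integral representation of the inverse of the operator $X \mapsto PX + XP$ on symmetric matrices. Write $A = UP$ with $U = U(A) \in \On(n)$ and $P = P(A) \in \Pd(n)$. The two defining equations are
\[
A = UP, \qquad U^T U = \on{Id}, \qquad P = P^T .
\]
Given a tangent vector $\dot A = UB$ at $A$ (so $B = U^{-1}\dot A = U^T \dot A$ is an arbitrary matrix, since $U$ is invertible), differentiate $A = UP$ to get $\dot A = \dot U P + U \dot P$, hence $UB = \dot U P + U\dot P$, i.e.
\[
B = (U^T\dot U) P + \dot P .
\]
Set $\Omega := U^T \dot U$; differentiating $U^TU = \on{Id}$ shows $\Omega + \Omega^T = 0$, so $\Omega \in \Ona(n)$ is skew-symmetric, while $\dot P$ is symmetric since $P = P^T$. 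So the equation $B = \Omega P + \dot P$ decomposes $B$ into "a skew-times-$P$ part plus a symmetric part." Taking the transpose gives $B^T = -P\Omega + \dot P$, and subtracting yields
\[
B - B^T = \Omega P + P \Omega .
\]

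Now the key algebraic point: since $P$ is symmetric positive definite, the linear map $L_P : \Ona(n) \to \Ona(n)$, $\Omega \mapsto \Omega P + P\Omega$, is invertible — indeed, if $P = \sum \lambda_i \, e_i e_i^T$ with $\lambda_i > 0$, then in the eigenbasis $L_P$ acts on the $(i,j)$ entry by multiplication by $\lambda_i + \lambda_j > 0$, so $L_P^{-1}$ is given by multiplication by $(\lambda_i + \lambda_j)^{-1}$. The standard integral identity
\[
\frac{1}{\lambda_i + \lambda_j} = \int_0^\infty e^{-s\lambda_i} e^{-s\lambda_j}\, ds
\]
then packages $L_P^{-1}$ as $M \mapsto \int_0^\infty e^{-sP} M e^{-sP}\, ds$ (this converges because $P$ is positive definite, and one verifies it inverts $L_P$ by differentiating $e^{-sP}(\Omega P + P\Omega)e^{-sP} = -\tfrac{d}{ds}\big(e^{-sP}\Omega e^{-sP}\big)$ and integrating, using $e^{-sP} \to 0$). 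Applying $L_P^{-1}$ to $B - B^T = L_P(\Omega)$ gives
\[
\Omega = \int_0^\infty e^{-sP}(B - B^T) e^{-sP}\, ds,
\]
and since $TU_A(UB) = \dot U = U\Omega$, this is exactly the claimed formula.

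The only mildly delicate points — not really obstacles — are: (i) justifying convergence of the improper integral and the interchange with differentiation, which follows from the exponential decay $\|e^{-sP}\| \le e^{-s\lambda_{\min}(P)}$; and (ii) confirming that $\dot U P = U\Omega P$ really is the full differential, i.e. that the decomposition $B = \Omega P + \dot P$ with $\Omega$ skew and $\dot P$ symmetric is unique, which is immediate from invertibility of $L_P$ (the skew part $\Omega P$ of... more precisely, $B - B^T = \Omega P + P\Omega$ determines $\Omega$ uniquely, hence $\dot P = B - \Omega P$ is determined too). No genuinely hard step arises; the content is the observation that differentiating the orthogonality constraint forces the Sylvester equation, whose solution operator has the stated integral form.
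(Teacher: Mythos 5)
Your proof is correct and follows essentially the same route as the paper: differentiate the polar decomposition to split the variation into a skew part and a symmetric part, arrive at the Lyapunov equation $\Omega P + P\Omega = B - B^T$, and invert it with the integral formula $\int_0^\infty e^{-sP}(\cdot)e^{-sP}\,ds$. The only cosmetic difference is that you establish skew-symmetry of $\Omega = U^T\dot U$ by differentiating $U^TU = \mathrm{Id}$ and verify the integral inverse by hand, whereas the paper identifies $U\cdot\mathfrak{p}(n)$ as the kernel of $TU_A$ and cites the Lyapunov solution from the literature.
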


\begin{proof} Fix $A \in \GL(2n)$ and let $A = UP$ be the polar decomposition. Note that we can split the tangent space $T_A\GL(2n)$ into a direct sum
\[
T_A\GL(2n) = \R^{n\times n} = U \cdot \mathfrak{o}(n) + U \cdot \mathfrak{p}(n)
\]
That is, any matrix can be written as a sum $US + UT$ where $S$ is anti-symmetric and $T$ is symmetric. Clearly, if $T \in \mathfrak{p}(n)$ is a small symmetric matrix $T \in \mathfrak{p}(n)$, the unitary part of $U(P + T)$ is $U$. Thus, $U \cdot \mathfrak{p}(n)$ is the kernel of $TU_A$, and so
\[TU(UB) = TU_A(US) \qquad\text{where}\qquad S = \frac{1}{2}(B - B^T)\]

Thus, we must compute $TU_A(US)$ where $S$ is the anti-symmetric part of $B$. Let
\[T\Phi_A(US) = (UM,N) \qquad\text{with}\qquad M \in \mathfrak{o}(n) \text{ and }N \in \mathfrak{p}(n)\]
denote the image of $US$ under $T\Phi_A$. Essentially by definition, $M$ and $N$ are the unique matrices that satisfy
\[US = T(\Phi^{-1})_{U,P}(UM,N) = \frac{d}{dt}(Ue^{Mt}P + U (P+tN))= UMP + UN\]
Multiplying this equation by $U^{-1}$ and taking the transpose, we acquire the two equations
\[S = MP + N \qquad\text{and}\qquad -S = -PM + N\]
The difference of these two equations is the well-known Lyupanov equation for $M$.
\[2S = \{M,P\} = MP + PM\]
This equation has an integral solution (cf. \cite[Thm 12.3.3 and Thm 13.1.1]{l1985}) given by
\[M = 2\int_0^\infty e^{-Ps}Se^{-Ps} ds = \int_0^\infty e^{-Ps}(B - B^T)e^{-Ps} ds \]
By construction of $M$, we have $TU_A(UB) = UM$, so this is the desired formula. 
\end{proof}

We are, of course, mostly interested in the symplectic polar decomposition. Let $\Omega$ denote the standard linear symplectic structure on $\C^n \simeq \R^{2n}$, i.e.
\[\Omega = \left[\begin{array}{cc}
0 & -I_n\\
I_n & 0
\end{array}\right]\]
We abbreviate the group of linear symplectomorphisms on $(\C^n,\Omega)$ and its Lie algebra in the usual way. 
\[
\Sp(2n) = \{A \in \on{GL}(2n) \; : \; A\Omega A^T = \Omega\} \qquad\text{and}\qquad \Spa(2n) = \{A \in \mathfrak{gl}(2n) \; : \; A\Omega + \Omega A^T = 0 \}
\]
Recall that $\mathfrak{sp}(2n) = \Omega \cdot \mathfrak{p}(2n)$ or, in other words, that $A$ is in the symplectic Lie algebra if and only if $\Omega A$ is symmetric. We let
\[\on{U}(n) = \on{O}(2n) \cap \on{Sp}(2n)\]
denote the unitary group on $\C^n$. By standard linear algebra (cf. \cite[Ch. 2]{ms2017}), the polar decomposition restricts to a map
\[
(U,P):\Sp(2n) \to \on{U}(n) \times (P(2n) \cap \on{Sp}(2n))
\]

The derivative formula in Lemma \ref{lem:polar_derivative} implies an estimate for the trace of derivative of the polar decomposition. This will be a key ingredient for bounding the Ruelle invariant in \S \ref{subsec:construction_of_ruelle}.

\begin{lemma}[Trace Estimate] \label{lem:trace_estimate} Let $A$ be a symplectic matrix and let $\Omega S$ be a symplectic Lie algebra element with $S$ positive semi-definite. Then
\[|\on{tr}_\C(TU_A(\Omega SA) \cdot U^{-1})| \le 16n^2 \cdot \on{tr}_\R(S)\]
\end{lemma}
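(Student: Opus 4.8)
The plan is to use the integral formula from Lemma \ref{lem:polar_derivative} together with the positivity hypothesis on $S$ to control the trace. Write $A = UP$ for the symplectic polar decomposition, so $U \in \U(n)$ and $P \in \Pd(2n) \cap \Sp(2n)$ is symmetric positive definite. Apply Lemma \ref{lem:polar_derivative} with $B = U^{-1}\Omega S A = U^{-1}\Omega S U P$: we get
\[
TU_A(\Omega S A) \cdot U^{-1} = \int_0^\infty e^{-sP}(B - B^T) e^{-sP}\, ds.
\]
Taking the (complex) trace and using linearity and the cyclic property, the problem reduces to bounding $\int_0^\infty \bigl|\on{tr}_\C\!\bigl(e^{-2sP}(B - B^T)\bigr)\bigr|\, ds$, after commuting one exponential past the other (they need not commute with $B-B^T$, so I would instead keep $e^{-sP}(B-B^T)e^{-sP}$ and use $\on{tr}(XY)$-type bounds). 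The key point is that $B - B^T = U^{-1}\Omega S U P - (U^{-1}\Omega S U P)^T = U^{-1}\Omega S U P - P^T U^T S^T \Omega^T U = U^{-1}\Omega S U P + P U^{-1}\Omega S U$ (using $S = S^T$, $\Omega^T = -\Omega$, $U^T = U^{-1}$, and $\Omega$ commutes with $U$ since $U$ is unitary), i.e. $B - B^T = \{P, U^{-1}\Omega S U\}$ is an anticommutator with the positive definite symmetric matrix $P$.

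First I would record the relevant estimates: since $U$ is unitary, $\tilde S := U^{-1}\Omega S U$ has the same operator norm and, more importantly, the same trace-type quantities as $\Omega S$ up to the conjugation, and one checks $\on{tr}_\R(\tilde S) $ is controlled by $\on{tr}_\R(S)$ (here I would use that $\Omega$ is orthogonal, so $|\on{tr}(\Omega S)| \le \|\Omega S\|_1 = \|S\|_1 = \on{tr}_\R(S)$ when $S \succeq 0$, and conjugation by the orthogonal $U$ preserves the trace norm). Then, with $M = TU_A(\Omega SA)U^{-1} = \int_0^\infty e^{-sP}\{P,\tilde S\}e^{-sP}\,ds$, I would compute the integral explicitly: $\frac{d}{ds}\bigl(e^{-sP}\tilde S e^{-sP}\bigr) = -e^{-sP}\{P,\tilde S\}e^{-sP}$, so the integral telescopes to $M = \bigl[-e^{-sP}\tilde S e^{-sP}\bigr]_0^\infty = \tilde S$ (the boundary term at $\infty$ vanishes because $P \succ 0$). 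Hence $M = U^{-1}\Omega S U$, and $\on{tr}_\C(M) = \on{tr}_\C(U^{-1}\Omega S U) = \on{tr}_\C(\Omega S)$.

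It then remains to bound $|\on{tr}_\C(\Omega S)|$ by a constant times $\on{tr}_\R(S)$. Since $S \succeq 0$, write $S = \sum_j \mu_j v_j v_j^T$ with $\mu_j \ge 0$ and $\{v_j\}$ orthonormal; then $\on{tr}_\R(S) = \sum_j \mu_j$ and $|\on{tr}_{\mathbb{R}}(\Omega S)| = |\sum_j \mu_j \langle v_j, \Omega v_j\rangle| \le \sum_j \mu_j \|\Omega\|\,\|v_j\|^2 = \on{tr}_\R(S)$. The passage from the real trace to the complex trace $\on{tr}_\C$ on $\C^n \cong \R^{2n}$ costs a universal factor (at most $n$, or a small constant), which together with bookkeeping from writing $B - B^T$ as an anticommutator and the (in the end trivial) integration accounts for the stated constant $16n^2$ — I expect the honest constant here is much smaller than $16n^2$, and the factor is deliberately generous. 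The main obstacle is purely organizational: keeping the conjugations by $U$, the transpose-versus-adjoint conventions, and the real-vs-complex trace normalization straight, so that the anticommutator identity $B - B^T = \{P, U^{-1}\Omega SU\}$ and the telescoping of the integral both come out cleanly; once those are pinned down the estimate is immediate and in fact gives $M = U^{-1}\Omega SU$ exactly, so no genuine inequality is needed beyond $|\on{tr}_\C(\Omega S)| \le C(n)\on{tr}_\R(S)$.
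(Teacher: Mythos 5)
The decisive step in your argument is the identity $B - B^T = \{P,\, U^{-1}\Omega S U\}$, and it is false: in passing from $P\,U^T S \Omega U$ to $P\,U^{-1}\Omega S U$ you slide $S$ past $\Omega$, which requires $[S,\Omega]=0$. A generic positive semi-definite $S$ does not commute with $\Omega$ (every symmetric $S$ gives $\Omega S\in\Spa(2n)$, so no commutation is forced). The correct computation gives $B - B^T = \tilde S P - P \tilde S^T$ with $\tilde S = U^T\Omega S U$; writing $\tilde S = \tilde S_a + \tilde S_s$ in antisymmetric and symmetric parts, this is $\{P,\tilde S_a\} + [\tilde S_s,P]$, and only the anticommutator piece telescopes. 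Hence your exact evaluation $TU_A(\Omega S A)\cdot U^{-1} = U^{-1}\Omega S U$ is wrong. Concretely, for $n=1$ take $A = P = \on{diag}(\lambda,\lambda^{-1})$ (so $U=\on{Id}$) and $S = \on{diag}(1,0)$. Then $B - B^T = \lambda\,\Omega$ and the Lyapunov equation $MP+PM = \lambda\,\Omega$ gives $M = \tfrac{\lambda^2}{1+\lambda^2}\,\Omega$, which depends on $P$; by contrast $U^{-1}\Omega S U = \Omega S$ is $\lambda$-independent and is not even an element of $\mathfrak{u}(1)$. So the quantity to be bounded genuinely depends on $A$ and admits no closed form in terms of $S$ alone; the claim that ``no genuine inequality is needed'' cannot be correct.

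Your setup up to that point (applying Lemma \ref{lem:polar_derivative} with $B = U^T\Omega S U P$) is exactly the paper's. Where the argument must actually do work is in estimating the integral $\int_0^\infty e^{-sP}\big(U^T\Omega S U P + P U^T S \Omega U\big)e^{-sP}\,ds$ without evaluating it: the paper converts $\on{tr}_\C$ to a real trace against $\Omega^{-1}$, bounds the trace of each of the two terms of the integrand using the positivity of $S$, $P$ and $e^{-sP}$, and then integrates via $\int_0^\infty \on{tr}\big(Pe^{-sP}\big)\,ds = \on{tr}(\on{Id}_{2n}) = 2n$, which is where the dimension-dependent constant comes from. Note also that bounding the leftover commutator contribution in your decomposition is not free: its ordinary real trace vanishes, but the complex trace involves $\Omega^{-1}$, which does not commute with $e^{-sP}$, so you would be led back to an estimate of the same integral type. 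Separately, your closing remark that $|\on{tr}_\R(\Omega S)|\le\on{tr}_\R(S)$ is moot, since $\on{tr}_\R(\Omega S)=0$ for every $\Omega S\in\Spa(2n)$.
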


\begin{proof} First, note that we can compute the complex trace as a real trace, as follows.
\begin{equation} \label{eqn:trace_estimate_1}
\on{tr}_\C(TU_A(\Omega SA) \cdot U^{-1}) = 2i \cdot \on{tr}_\R(\Omega^{-1} \cdot TU_A(\Omega SA) \cdot U^{-1}) \end{equation}
Thus it suffices to estimate the real trace of $\Omega^{-1} \cdot TU_A(\Omega SA) \cdot U^{-1}$. We may write $\Omega SA = UB$ where $B = U^T\Omega SUP$ and apply Lemma \ref{lem:polar_derivative} to see that
\[
TU_A(\Omega SA) = U \cdot \int_0^\infty e^{-sP}(U^T\Omega SUP + P^TU^TS\Omega U)e^{-sP} ds
\]
We multiply on the left by $-\Omega $ and on the right by $U^{-1} = U^T$ to acquire the formula
\begin{equation} \label{eqn:trace_estimate_2}
\Omega^T \cdot TU_A(\Omega SA) \cdot U^T = \int_0^\infty (\Omega^TUe^{-sP}U^T\Omega)S(UPe^{-sP}U^T) + (\Omega^TUe^{-sP}PU^T\Omega)\Omega^T S\Omega(Ue^{-sP}U^T) ds
\end{equation}
The matrices $S,P$ and $e^{-sP}$ are all positive definite and $e^{-sP}$ has eigenvalues between $0$ and $1$. Thus, we may estimate the integrand on the righthand side as follows.
\[
|\on{tr}_\R((\Omega^TUe^{-sP}U^T\Omega)S(UPe^{-sP}U^T) + (\Omega^TUe^{-sP}PU^T\Omega)\Omega^TS\Omega(Ue^{-sP}U^T))|\]
\[\le |\on{tr}_\R((\Omega^TUe^{-sP}U^T\Omega)S(UPe^{-sP}U^T))| + |\on{tr}_\R((\Omega^TUe^{-sP}PU^T\Omega)\Omega^TS\Omega(Ue^{-sP}U^T))|\]
\[\le 2 \cdot \on{tr}_\R(e^{-sP}) \cdot \on{tr}_\R(S) \cdot \on{tr}(Pe^{-sP}) \le 4n \cdot \on{tr}_\R(S) \cdot \on{tr}(Pe^{-sP}) 
\]
Therefore, we have
\[
|\on{tr}_\R(\Omega^T \cdot TU_A(\Omega SA) \cdot U^T) \le 4n \cdot \on{tr}_\R(S) \cdot \big(\int_0^\infty Pe^{-sP} ds \big) = 4n \cdot \on{tr}_\R(S) \cdot \on{tr}_\R(I_{2n}) = 8n^2 \cdot \on{tr}_\R(S)
\]
We can plug this estimate into (\ref{eqn:trace_estimate_1}) to acquire the desired result. \end{proof}

\subsection{Rotation Quasimorphisms} \label{subsec:rotation_quasimorphisms} The rotation quasimorphism is a certain (equivalence class of) quasimorphism on the universal cover $\widetilde{\Sp}(2n)$ of $\Sp(2n)$. Let us recall the relevant definitions.

\vspace{3pt}

\begin{definition} A \emph{quasimorphism} $q:G \to \R$ from a group $G$ is a map that satisfies
\begin{equation} \label{eqn:quasimorphism_property}
|q(gh) - q(g) - q(h)| < C \qquad\text{for all }g,h \in G\text{ and some }C > 0\text{ independent of $g,h$}
\end{equation}
Two quasimorphisms $q$ and $q'$ are \emph{equivalent} if $|q - q'|$ is bounded, and $q$ is \emph{homogeneous} if
\[q(g^k) = k \cdot q(g) \qquad \text{for any}\qquad g \in G \qquad\text{and}\qquad k\in\Z\]
\end{definition}

The universal cover of the symplectic group possesses a canonical homogeneous quasimorphism, due to the following result of Salamon-Ben Simon \cite{ss2010}.

\begin{thm}[\cite{ss2010}, Thm 1] \label{thm:quasi_homo_twSp} There exists a unique homogeneous quasimorphism
\[\rho:\twSp(2n) \to \R\]
that restricts to the lift of the complex determinant $\on{det}_\C$ on $\on{U}(n)$. That is, the diagram
\begin{equation}
\begin{tikzcd}
\widetilde{\on{U}}(n) \arrow[r,"\rho"] \arrow[d,swap,"\pi"] & \R \arrow[d,"\on{exp}(2\pi i \cdot )"]\\
\on{U}(n) \arrow[r,"\on{det}_\C"] & U(1)
\end{tikzcd}\qquad \text{commutes.}
\end{equation}

\end{thm}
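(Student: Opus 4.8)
The plan is to establish existence by writing down an explicit, non-homogeneous quasimorphism built from the polar decomposition and then homogenizing it, and to establish uniqueness by a descent argument to the group $\Sp(2n)$ itself.

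For existence, I would begin with the unitary part $U\colon\Sp(2n)\to\on{U}(n)$ of the polar decomposition (\S\ref{subsec:polar_decomposition}), which is a deformation retraction, so that $\on{det}_\C\circ U\colon\Sp(2n)\to U(1)$ induces an isomorphism on $\pi_1$. Let $\pi\colon\twSp(2n)\to\Sp(2n)$ be the covering, let $\tilde\delta\colon\widetilde{\on{U}}(n)\to\R$ denote the lift of $\on{det}_\C$ to the universal cover of $\on{U}(n)$, and let $f\colon\twSp(2n)\to\R$ be the continuous lift of $\on{det}_\C\circ U\circ\pi$ normalized so that $f|_{\widetilde{\on{U}}(n)}=\tilde\delta$. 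The crucial claim is that $f$ is a quasimorphism: its defect $D(\tilde g,\tilde h)=f(\tilde g\tilde h)-f(\tilde g)-f(\tilde h)$ is invariant under the deck group $Z=\ker\pi$ in each variable (changing the lift of $\on{det}_\C\circ U$ changes $f$ by an integer), so it descends to a continuous function on $\Sp(2n)\times\Sp(2n)$, which vanishes on $\on{U}(n)\times\on{U}(n)$ since $\tilde\delta$ is a homomorphism there. The boundedness of $D$ is the boundedness of a Maslov-type $2$-cocycle on $\Sp(2n)$; one can derive it either from the nonpositive curvature of the symmetric space $\Sp(2n)/\on{U}(n)$ (the Siegel upper half space), or, closer to the tools set up here, from the derivative formula of Lemma \ref{lem:polar_derivative} together with the trace bound of Lemma \ref{lem:trace_estimate}, which control how fast $\on{det}_\C\circ U$ can wind along paths and hence the winding around a geodesic triangle.

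Granting that $f$ is a quasimorphism, I would set $\rho(\tilde g):=\lim_{k\to\infty}f(\tilde g^{\,k})/k$; the limit exists by the subadditivity coming from the defect bound, $\rho$ is homogeneous, and $\rho-f$ is bounded, by the standard properties of homogenization. Since $f$ restricts to a group homomorphism on $\widetilde{\on{U}}(n)$ we have $f(\tilde u^{\,k})=kf(\tilde u)$, whence $\rho|_{\widetilde{\on{U}}(n)}=f|_{\widetilde{\on{U}}(n)}=\tilde\delta$, which is the normalization demanded by the theorem.

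For uniqueness, suppose $\rho'$ is another homogeneous quasimorphism with the same restriction and set $\psi:=\rho-\rho'$, a homogeneous quasimorphism vanishing on $\widetilde{\on{U}}(n)$. The center $Z=\ker\pi\cong\pi_1(\Sp(2n))\cong\pi_1(\on{U}(n))$ is contained in $\widetilde{\on{U}}(n)$, so $\psi|_Z=0$; applying the defect bound to $(\tilde g z)^k=\tilde g^k z^k$ and dividing by $k$ gives $\psi(\tilde g z)=\psi(\tilde g)+\psi(z)=\psi(\tilde g)$ for $z\in Z$, so $\psi$ descends to a homogeneous quasimorphism on $\Sp(2n)$. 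This must vanish, since $\Sp(2n)$ is uniformly perfect---being a connected semisimple Lie group with finite center---and so carries no nonzero homogeneous quasimorphism; hence $\rho=\rho'$. I expect the only genuine obstacle to be the boundedness of $D$; everything else is formal, and uniqueness is ultimately forced by the single fact that $\tilde\delta$ takes the value $1$ on the generator of the central subgroup $Z\subset\widetilde{\on{U}}(n)$.
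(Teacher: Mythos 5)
This theorem is imported from Ben Simon--Salamon \cite{ss2010}; the paper gives no proof of its own, so there is nothing internal to compare against. Your argument is, in outline, the standard one (and essentially the one in \cite{ss2010}): construct the lifted determinant $f=\on{det}_\C\circ\, U$, verify the defect bound, homogenize, and prove uniqueness by descending the difference of two such quasimorphisms to $\Sp(2n)$, where homogeneous quasimorphisms vanish. Both halves are correct in structure, and the descent step is handled properly: $\ker\pi\cong\pi_1(\Sp(2n))\cong\pi_1(\U(n))$ does lie in $\widetilde{\U}(n)$, and $\psi(\tilde g z)=\psi(\tilde g)+\psi(z)$ for central $z$ is the standard additivity of homogeneous quasimorphisms on commuting elements.

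Two points deserve more care. First, your suggested route to the defect bound via Lemma \ref{lem:polar_derivative} and Lemma \ref{lem:trace_estimate} does not work as stated: the trace estimate only controls $\tfrac{d}{dt}\on{det}_\C(U_t)$ along paths whose generator $S_t$ is positive semi-definite, and the defect of $f$ at an arbitrary pair $(\tilde g,\tilde h)$ is not computed along such paths. The boundedness of the defect is genuinely the Barge--Ghys/Dupont cocycle bound (nonpositive curvature of the Siegel upper half-space, or equivalently boundedness of the K\"ahler primitive); the paper records exactly this as Lemma \ref{lem:determinant_quasimorphism} with citations \cite{bg1992,d1979}, and you should simply invoke it rather than claim it follows from the polar-decomposition lemmas. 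Second, the uniqueness step is not ``formal'': the entire content is the vanishing of all homogeneous quasimorphisms on $\Sp(2n,\R)$, which requires either uniform perfectness of $\Sp(2n,\R)$ (an algebraic theorem, needed here because $\rho'$ is not assumed continuous, so continuous-bounded-cohomology arguments do not directly apply) or Shtern-type automatic continuity. The fact you cite is true and is precisely the input used in \cite{ss2010}, but it should be flagged as a nontrivial external theorem rather than folklore; with that citation in place the proof is complete.
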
 

\begin{definition} A \emph{rotation quasimorphism} $q:\twSp(2n) \to \R$ is a quasimorphism that is equivalent to the quasimorphism $\rho$ in Theorem \ref{thm:quasi_homo_twSp}. \end{definition}

We will use two representatives of this equivalence class of quasimorphisms. The first is defined using the complex determinant of the unitary part of the polar decomposition.

\begin{example} \label{ex:determinant_quasimorphism} \cite{bg1992} The \emph{determinant quasimorphism} $r:\twSp(2n) \to \R$ is the lift of the composition
\begin{equation} \Sp(2n) \xrightarrow{U} \U(n) \xrightarrow{\text{det}_\C} \U(1) \simeq \R/\Z \end{equation} 
\end{example}

\begin{lemma} \cite{bg1992,d1979} \label{lem:determinant_quasimorphism} There exists a constant $C > 0$ such that
\[
|r(\tilde{\Phi}\tilde{\Psi}) - r(\tilde{\Phi}) - r(\tilde{\Psi})| < C \qquad\text{for all}\qquad \tilde{\Phi},\tilde{\Psi} \in \twSp(2n)
\]
\end{lemma}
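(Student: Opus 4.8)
The plan is to reduce the statement to the defect of the map $U:\Sp(2n)\to\U(n)$ assigning the unitary part of the polar decomposition, since $r$ is by construction the lift of $\det_{\C}\circ\, U$. Concretely, I would argue that it suffices to prove a multiplicative estimate on $\U(n)$: there is a constant $c>0$ so that for all $A,B\in\Sp(2n)$ the unitary factors satisfy $U(AB)=U(A)\,U(B)\,W$ with $W\in\U(n)$ lying in a fixed compact neighborhood of the identity, independent of $A,B$; equivalently, $U(A)^{-1}U(AB)U(B)^{-1}$ ranges over a relatively compact subset of $\U(n)$. Granting this, $\det_{\C}(U(AB))=\det_{\C}(U(A))\det_{\C}(U(B))\det_{\C}(W)$, and since $\det_{\C}(W)$ lies in a fixed arc of $\U(1)\simeq\R/\Z$ bounded away from the whole circle, lifting to $\widetilde{\Sp}(2n)$ (compatibly, using the fixed homotopy class of path, i.e. the universal cover) gives $|r(\tilde A\tilde B)-r(\tilde A)-r(\tilde B)|<C$ where $C$ bounds the lifted argument of $\det_{\C}(W)$. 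This is the structure of the Barge--Ghys argument \cite{bg1992}; the deformation-retraction $\Sp(2n)\simeq\U(n)$ makes the lift to $\widetilde{\Sp}(2n)$ behave exactly like the lift on $\widetilde{\U}(n)$.

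To establish the boundedness of the defect $W(A,B):=U(A)^{-1}U(AB)U(B)^{-1}$, I would use the Iwasawa-type structure of the polar decomposition. Writing $A=U(A)P(A)$ and $B=U(B)P(B)$ with $P(A),P(B)$ symmetric positive-definite symplectic, one has $AB=U(A)P(A)U(B)P(B)$, and the point is that the ``middle'' factor $P(A)U(B)$ again has a polar decomposition $P(A)U(B)=U'P'$ with $U'\in\U(n)$ and $P'$ positive-definite, giving $AB=U(A)U'P'P(B)$. The unitary part of $AB$ is then the unitary part of $U(A)U'(P'P(B))$, which is $U(A)U'\cdot(\text{unitary part of }P'P(B))$. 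Chasing the definitions, $W(A,B)$ is a product of two factors of the form (unitary part of a product of two positive-definite matrices)$^{\pm1}$, conjugated by unitaries. The key elementary fact is that for positive-definite $P,Q$ the unitary part of $PQ$ is \emph{not} generally $\mathrm{Id}$, but it does lie in a bounded region — more sharply, $PQ$ is conjugate to the positive-definite matrix $P^{1/2}QP^{1/2}$, so $PQ$ has positive real spectrum, which constrains its unitary part; and the set of unitary parts of $\{PQ: P,Q \text{ positive-definite symplectic}\}$ is all of $\U(n)$ only a priori — one needs the sharper observation that the \emph{total} defect, after accounting for both steps and the cocycle cancellations, stays in a compact set. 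An alternative and perhaps cleaner route, which I would actually prefer, is to cite the general principle (due to Barge--Ghys \cite{bg1992} and Dupont \cite{d1979}, as the lemma already does) that a continuous map $G\to\U(1)$ which is a homomorphism on a maximal compact subgroup $K$ and on which $G$ deformation-retracts to $K$ lifts to a quasimorphism on $\widetilde G$; here $G=\Sp(2n)$, $K=\U(n)$, and the map is $\det_{\C}\circ U$. Under this route the lemma is essentially a citation, and the only thing to check is that $\det_{\C}\circ U$ genuinely restricts to $\det_{\C}$ on $\U(n)$ — which is immediate since $U|_{\U(n)}=\mathrm{id}$.

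The main obstacle is making the compactness of the defect $W(A,B)$ genuinely uniform in $A$ and $B$, since both $P(A)$ and $P(B)$ can have arbitrarily large and arbitrarily small eigenvalues: one must see that the unitary parts nevertheless cannot wind arbitrarily, which is exactly where the rigidity of the polar decomposition on $\Sp(2n)$ (as opposed to $\GL(2n,\R)$) and the fact that we are composing with $\det_{\C}$ rather than tracking the full $\U(n)$ data enters. I expect this is handled most efficiently by invoking the abstract Barge--Ghys/Dupont lifting criterion rather than by an explicit estimate, and I would present it that way, remarking that an explicit bound on $C$ (which we do not need here, since Theorem \ref{thm:Ruelle_invariant_intro} only requires existence of \emph{some} rotation quasimorphism) can be extracted from Lemma \ref{lem:polar_derivative} if desired.
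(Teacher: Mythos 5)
The paper offers no proof of this lemma at all: it is stated as a known result with citations to Barge--Ghys and Dupont, and the route you say you would actually take --- invoking those references after checking that $\on{det}_\C\circ U$ restricts to $\on{det}_\C$ on $\U(n)$ --- is therefore exactly the paper's approach. At that level the proposal is fine.

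Since most of your write-up is devoted to a direct argument, one substantive comment on it. The reduction to ``$W(A,B)=U(A)^{-1}U(AB)U(B)^{-1}$ ranges over a relatively compact subset of $\U(n)$'' is vacuous: $\U(n)$ is compact, so every subset is relatively compact, and likewise $\on{det}_\C(W)$ always lies in the compact set $\U(1)$. What the lemma requires is boundedness of the real-valued defect $D(\tilde{\Phi},\tilde{\Psi})=r(\tilde{\Phi}\tilde{\Psi})-r(\tilde{\Phi})-r(\tilde{\Psi})$, which is a lift of $\on{det}_\C(W)$ to $\R$, and compactness downstairs gives no control on a lift. The correct version of your strategy is: $D$ descends to a continuous function on $\Sp(2n)\times\Sp(2n)$ (the deck group is central and shifts $r$ by integers), so if one shows that $\on{det}_\C(W(A,B))$ uniformly \emph{avoids some point} of $\U(1)$, then $D$ omits a coset $\theta_0+\Z$, and connectedness of $\Sp(2n)^2$ together with $D(\on{Id},\on{Id})=0$ confines $D$ to an interval of length $1$. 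That avoidance statement is the real content, and your Iwasawa manipulation does not establish it --- although it simplifies further than you note: since the unitary part of $P_0M$ with $M$ unitary is $M$ itself, your middle factor has $U'=U(B)$, so $W(A,B)$ is exactly the unitary part of a product $Q_1Q_2$ of two positive-definite symplectic matrices. Such a product has positive real spectrum, and it is this spectral constraint (compare the eigenvalue quasimorphism of Example \ref{ex:eigenvalue_quasimorphism}, which vanishes on matrices with no eigenvalues in $(U(1)\cup(-\infty,0))\setminus\{1\}$) from which the avoidance can actually be extracted. None of this is needed for the paper, which treats the lemma as a black box, but as written your direct sketch does not close.
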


\noindent The second uses the eigenvalues of $A$, and appears in formulations of the Conley-Zehnder index.

\begin{example} \cite{g2014} \label{ex:eigenvalue_quasimorphism} The \emph{eigenvalue quasimorphism} $e:\twSp(2n) \to \R$ is the lift of the map
\[\underline{e}:\Sp(2n) \to U(1)\]
defined as follows. Let $A$ be a symplectic matrix. For each eigenvalue $\lambda \in U(1) \setminus \{\pm 1\}$ with generalized complex eigenspace $V(\lambda)\subset \C^{2n}$, consider the real quadratic form
\[Q(A,\lambda):V(\lambda) \otimes V(\lambda) \to \R \qquad \text{given by}\qquad Q(A,\lambda)(v \otimes w) = \on{Im}(\omega(v,\bar{w}))\]
Let $m(A,\lambda)$ be the maximal real dimension of a real subspace of $V(\lambda)$ on which $Q(A,\lambda)$ is positive definite. Finally, let $n(A)$ denote the sum of the complex dimensions of the generalized eigenspaces of $A$ with negative real eigenvalues. Then
\begin{equation} \label{eqn:eigenvalue_quasimorphism}
\underline{e}(A) = (-1)^{n(A)/2} \cdot \prod_{\lambda \in U(1) \setminus \{\pm 1\}} \lambda^{m(A,\lambda)/2} \in U(1)
\end{equation}
Note that if $A$ has no eigenvalues in $(U(1) \cup (-\infty,0)) \setminus \{1\}$, then $\underline{e}(A)$ is $1$ by convention. \end{example}

\begin{prop}[Trace Estimate] \label{prop:trace_estimate_r} Let $A:[0,T] \to \Sp(2n)$ be a path of symplectic matrices with $A_0 = \on{Id}$ and let
\[
S_t := -\Omega\frac{dA}{dt} A_t^{-1}
\]
Suppose that $S$ is positive semi-definite. Then the determinant quasimorphism $r$ (see Example \ref{ex:determinant_quasimorphism}) satisfies 
\[
r(A) \le \frac{8n^2}{\pi} \int_0^T \on{tr}(S_t) dt
\]
where we regard $A$ as an element of the universal cover $\widetilde{\on{Sp}}(2n)$.
\end{prop}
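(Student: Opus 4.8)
The plan is to realize $r(A)$ as the winding of $\on{det}_\C$ of the unitary part of $A_t$ along the path, and then bound the rate of that winding pointwise via the trace estimate of Lemma \ref{lem:trace_estimate}.

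\emph{Algebraic setup.} Since $\frac{dA}{dt}A_t^{-1} \in \Spa(2n)$ and a matrix $B$ lies in $\Spa(2n)$ precisely when $\Omega B$ is symmetric, the matrix $-S_t = \Omega\frac{dA}{dt}A_t^{-1}$ is symmetric; together with the hypothesis this makes $S_t$ symmetric and positive semi-definite, and $\frac{dA}{dt} = \Omega S_t A_t$ with $\Omega S_t \in \Spa(2n)$. Hence Lemma \ref{lem:trace_estimate} applies at each time $t$ to the vector $\Omega S_t A_t$ tangent to $\Sp(2n)$ at $A_t$.

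\emph{Unwinding the quasimorphism.} By Example \ref{ex:determinant_quasimorphism}, regarding $A$ as the element of $\twSp(2n)$ represented by the path $\{A_t\}_{t\in[0,T]}$ from $A_0 = \on{Id}$, the number $r(A)$ is the endpoint of the continuous lift to $\R$, starting at $0$, of the path $t \mapsto \on{det}_\C(U(A_t)) \in \U(1) \simeq \R/\Z$. Writing $\on{det}_\C(U(A_t)) = e^{2\pi i\theta(t)}$ with $\theta$ continuous and $\theta(0) = 0$, Jacobi's formula and cyclicity of the trace give
\[
2\pi i\,\dot\theta(t) = \frac{d}{dt}\log\on{det}_\C(U(A_t)) = \on{tr}_\C\!\big(TU_{A_t}(\Omega S_t A_t)\cdot U(A_t)^{-1}\big),
\]
where $U(A_t)$ is the unitary part of the symplectic polar decomposition of $A_t$; the right-hand side is purely imaginary since $U(A_t)^{-1}\,TU_{A_t}(\Omega S_t A_t) \in \Ua(n)$, so $\theta$ is real-valued.

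\emph{Applying the trace estimate.} By Lemma \ref{lem:trace_estimate}, $\big|\on{tr}_\C(TU_{A_t}(\Omega S_t A_t)\cdot U(A_t)^{-1})\big| \le 16n^2\,\on{tr}_\R(S_t)$, hence $|\dot\theta(t)| \le \tfrac{8n^2}{\pi}\on{tr}_\R(S_t)$. Integrating over $[0,T]$,
\[
r(A) = \theta(T) = \int_0^T \dot\theta(t)\,dt \le \int_0^T |\dot\theta(t)|\,dt \le \frac{8n^2}{\pi}\int_0^T \on{tr}(S_t)\,dt,
\]
which is the claim (indeed with $|r(A)|$ on the left, though only the one-sided bound is needed). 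The one genuine subtlety is the bookkeeping in the middle step: that $r$ is a bona fide function on $\twSp(2n)$ computed by this winding integral along the prescribed path, and that the identification $\U(1) \simeq \R/\Z$ is responsible for the factor $1/(2\pi)$ --- hence for $8n^2/\pi$ rather than $16n^2$. Everything substantive is already contained in Lemma \ref{lem:trace_estimate} (itself resting on Lemma \ref{lem:polar_derivative}); and if $A$ is only piecewise $C^1$ or absolutely continuous, $\theta$ is Lipschitz and the argument is unchanged.
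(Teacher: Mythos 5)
Your proposal is correct and follows essentially the same route as the paper: both express $r(A)$ as the winding integral $\frac{1}{2\pi i}\int_0^T \on{tr}_\C(\dot U_t U_t^{-1})\,dt$ of the determinant of the unitary part and then apply Lemma \ref{lem:trace_estimate} pointwise, with the factor $2\pi$ converting $16n^2$ into $8n^2/\pi$. The extra bookkeeping you include (that $S_t$ is symmetric, that the integrand is purely imaginary, and the remark on regularity) is harmless and only makes explicit what the paper leaves implicit.
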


\begin{proof} Let $U$ be the unitary part of $A$. The rotation quasimorphism on $A$ is given by
\[r(A) = \frac{1}{2\pi i} \cdot \int_0^T \frac{\frac{d}{dt}(\text{det}_\C(U_t))}{\text{det}_\C(U_t)} dt = \frac{1}{2\pi i} \cdot \int_0^T \text{tr}_\C(\frac{dU_t}{dt} U_t^{-1}) dt\]
The trace estimate in Lemma \ref{lem:trace_estimate} implies that the trace above can be estimated as
\[
|\text{tr}_\C(\frac{dU_t}{dt} U_t^{-1})| \le 16n^2 \cdot \on{tr}(S_t) \qquad\text{and thus}\qquad |r(A)| \le \frac{8n^2}{\pi} \cdot \int_0^T \on{tr}(S_t) ds\qedhere
\]
\end{proof}

\subsection{Conley-Zehnder Index} \label{subsec:CZ_index} The Conley-Zehnder index is an invariant of certain non-degenerate elements of the universal cover of the symplectic group. 

\vspace{3pt}

Recall that a symplectic matrix $A \in \Sp(2n)$ is \emph{non-degenerate} if none of its eigenvalues is equal to $1$. That is
\[
\on{det}(A - \on{Id}) \neq 0
\]
We let $\Sp_\star(2n) \subset \Sp(2n)$ denote the open set of non-degenerate symplectic matrices and $\widetilde{\Sp}_\star(2n)$ denote its inverse image in the universal cover. 

\begin{thm} \label{thm:CZ_index} (cf. \cite{g2014}) There is a unique continuous map, called the Conley-Zehnder index, of the form
\[
\CZ:\widetilde{\Sp}_\star(2n) \to \Z \qquad \text{for each dimension $n$}
\]
that satisfies the following list of axioms.
\begin{itemize}
    \item[(a)] (Naturality) $\CZ$ is invariant under conjugation.
    \[\CZ(\widetilde{\Psi}\widetilde{\Phi}\widetilde{\Psi}^{-1}) = \CZ(\widetilde{\Phi})\]
    \item[(b)] (Direct Sum) $\CZ$ is additive under direct sum.
    \[\CZ(\widetilde{\Phi} \oplus \widetilde{\Psi}) = \CZ(\widetilde{\Psi}) + \CZ(\widetilde{\Phi})\]
    \item[(c)] (Maslov Index) If $\widetilde{\Psi} \in \pi_1(\Sp(2n))$ is an element of $\widetilde{\Sp}(2n)$ starting and ending on $\on{Id}$, then
    \[\CZ(\widetilde{\Psi}\widetilde{\Phi}) = 2\mu(\widetilde{\Psi}) + \CZ(\widetilde{\Phi})\]
    Here $\mu$ is the Maslov index of the loop (cf. \cite{ms2017}).
    \vspace{3pt}
    \item[(d)] (Signature) Let $\widetilde{\Psi} \in \widetilde{\Sp}_\star(2n)$ be the homotopy class of the path $t \mapsto \exp(2\pi \Omega At)$ for $t \in [0,1]$, where $A$ is a non-degenerate symmetric matrix with eigenvalues $\lambda$ of norm less than $1$. Then
    \[\CZ(\widetilde{\Psi}) = \frac{1}{2} \cdot \on{sign}(A) \qquad\text{where $\on{sign}(\cdot)$ is the signature.}\]
\end{itemize}
\end{thm}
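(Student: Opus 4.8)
The plan is to establish existence and uniqueness separately, following the standard strategy for axiomatic characterizations of the Conley--Zehnder index (as in Gutt~\cite{g2014} or Salamon--Zehnder). For \emph{uniqueness}, I would argue that any two maps $\CZ,\CZ':\widetilde{\Sp}_\star(2n)\to\Z$ satisfying (a)--(d) must agree. The key observation is that $\widetilde{\Sp}_\star(2n)$ has connected components indexed compatibly with the axioms: two elements of $\widetilde{\Sp}_\star(2n)$ lie in the same component if and only if the underlying paths in $\Sp_\star(2n)$ are homotopic rel endpoints, and since $\CZ$ is continuous and $\Z$-valued it is constant on components. So it suffices to check that the value on each component is forced. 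First, every non-degenerate symplectic matrix can be connected within $\Sp_\star(2n)$ to a direct sum of standard model blocks (rotations $R_\theta$ with $\theta\notin 2\pi\Z$, hyperbolic blocks, and $-\mathrm{Id}$-type blocks); axiom (b) reduces the computation to these blocks, and axiom (d) pins down the value of the ``elliptic/signature'' generators while axiom (c) fixes the ambiguity coming from $\pi_1(\Sp(2n))\cong\Z$. Concatenating with a loop of Maslov index $\mu$ shifts $\CZ$ by $2\mu$, so once the value is known on one lift of each component of $\Sp_\star(2n)$, axiom (c) determines it on every lift. A short case analysis over the model blocks then shows the value is uniquely determined everywhere.

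For \emph{existence}, I would exhibit an explicit formula and verify the four axioms. The cleanest route is to use the eigenvalue quasimorphism $e:\widetilde{\Sp}(2n)\to\R$ from Example~\ref{ex:eigenvalue_quasimorphism}, or equivalently the rotation quasimorphism $\rho$ of Theorem~\ref{thm:quasi_homo_twSp}, together with a correction term detecting the ``jump'' that occurs as eigenvalues cross $1$. Concretely, one sets $\CZ(\widetilde\Phi)$ equal to twice a suitable continuous real-valued lift of the eigenvalue map along the path, rounded appropriately at the endpoint (non-degeneracy guarantees no eigenvalue sits at $1$ at the endpoint, so the rounding is unambiguous). Naturality (a) follows because conjugation preserves eigenvalues and the quadratic forms $Q(A,\lambda)$, hence preserves $\underline e$; direct-sum additivity (b) follows from multiplicativity of $\underline e$ under block sums and additivity of the lift; the Maslov property (c) follows because precomposing with a loop $\widetilde\Psi\in\pi_1(\Sp(2n))$ changes the lift of $\det_\C U$ by $\mu(\widetilde\Psi)$, and the factor of $2$ appears because $\CZ$ is built from $2e$; the signature axiom (d) is a direct computation with the path $t\mapsto\exp(2\pi\Omega A t)$, diagonalizing $A$ and summing the contributions of each eigenvalue, which reproduces $\tfrac12\operatorname{sign}(A)$. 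Continuity is automatic since the quasimorphism lift varies continuously and the endpoint rounding is locally constant on $\widetilde{\Sp}_\star(2n)$.

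The main obstacle I anticipate is the bookkeeping at \emph{degenerate transitions}: one must show the proposed formula is genuinely integer-valued and continuous across the strata where eigenvalues on the unit circle collide or pass through $\pm1$ (away from the endpoint), and that the ``rounding'' convention is consistent with axiom (d) rather than off by a constant or a sign. This is where the $(-1)^{n(A)/2}$ factor and the precise definition of $m(A,\lambda)$ in Example~\ref{ex:eigenvalue_quasimorphism} must be handled with care. In practice, since this theorem is quoted from~\cite{g2014}, I would verify existence and uniqueness by reducing to the normal-form blocks as above and citing the detailed block computations there, presenting only the reduction and the verification of the axioms on model blocks in full.
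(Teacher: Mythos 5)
The paper offers no proof of Theorem~\ref{thm:CZ_index}: it is quoted from Gutt~\cite{g2014} (ultimately Salamon--Zehnder), so there is no in-paper argument to compare against. Your uniqueness outline matches the standard one from that literature and is essentially sound: continuity forces $\CZ$ to be locally constant, axioms (a), (b) and the normal-form decomposition reduce everything to model blocks, (d) pins down the value near the identity, and (c) propagates it across the fibers of $\widetilde{\Sp}_\star(2n)\to\Sp_\star(2n)$. Two small corrections: elements of $\widetilde{\Sp}_\star(2n)$ are homotopy classes of paths in $\Sp(2n)$ starting at the \emph{degenerate} matrix $\on{Id}$, not paths in $\Sp_\star(2n)$; and the reduction silently uses the nontrivial fact that the two components of $\Sp_\star(2n)$ (distinguished by the sign of $\det{\Phi-\on{Id}}$) are connected and simply connected, so that every element can be joined to a reference matrix $W^\pm$ by an essentially unique path in the non-degenerate locus.

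The existence step as literally described has a genuine gap. You propose $\CZ(\widetilde\Phi)$ as ``twice a continuous lift of the eigenvalue map, rounded appropriately at the endpoint,'' with non-degeneracy of the endpoint making the rounding unambiguous. This works only for $n=1$. In general the defect $\CZ(\widetilde\Phi)-2\rho(\widetilde\Phi)$ has magnitude up to $n$ --- the paper's own Lemma~\ref{lem:LCZ_index}(d) records $\LCZ\ge 2\rho-n$, and the bound is attained: for $n=2$ and the path $t\mapsto\on{exp}(2\pi i\theta t)\oplus\on{exp}(2\pi i\theta t)$ with small $\theta>0$, Lemma~\ref{lem:LCZ_of_U1_element} gives $\CZ=2$ while $2\rho=4\theta$ is close to $0$. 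So for $n\ge 2$ no rounding of $2e(\widetilde\Phi)$ recovers $\CZ$; the correction term depends on the symplectic conjugacy class of the endpoint, not merely on its eigenvalues avoiding $1$. The standard fixes are either to extend the path inside $\Sp_\star(2n)$ to the reference matrix $W^\pm$ and take the winding number of the Salamon--Zehnder circle map along the concatenation, or to use the crossing-form/normal-form definition; either should replace the rounding step, after which your verification of axioms (a)--(d) on model blocks goes through as in~\cite{g2014}.
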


\noindent There are a number of inequivalent ways to extend the Conley-Zehnder from $\widetilde{\Sp}_\star(2n)$ to $\widetilde{\Sp}(2n)$ \cite{g2014,gg2016}. We are primarily interested in the following extension.

\begin{definition} \label{def:LCZ_index} The \emph{lower semi-continuous Conley-Zehnder index} is the map
\[\LCZ:\widetilde{\Sp}(2n) \to \Z \qquad\text{with}\qquad \LCZ(\widetilde{\Phi}) = \inf\big\{\liminf_{j \to \infty} \CZ(\widetilde{\Phi}_j) \; : \; \widetilde{\Phi}_j \in \widetilde{\Sp}(2n) \text{ with }\widetilde{\Phi}_j \to \widetilde{\Phi}\big\}\]\end{definition}

Evidently, $\CZ$ extends $\LCZ$ in the sense that $\CZ = \LCZ$ on $\widetilde{\Sp}_\star(2n)$ and some axioms of $\CZ$ survive as properties of $\LCZ$. We record these properties, along with a key lower bound, below.

\begin{lemma} \label{lem:LCZ_index} The lower semi-continuous Conley-Zehnder index $\LCZ$ has the following properties.
\begin{itemize}
    \item[(a)] (Naturality) $\LCZ$ is invariant under conjugation.
    \[\LCZ(\widetilde{\Psi}\widetilde{\Phi}\widetilde{\Psi}^{-1}) = \LCZ(\widetilde{\Phi})\]
    \item[(b)] (Direct Sum) $\LCZ$ is additive under direct sum.
    \[\LCZ(\widetilde{\Phi} \oplus \widetilde{\Psi}) = \LCZ(\widetilde{\Psi}) + \CZ(\widetilde{\Phi})\]
    \item[(c)] (Maslov Index) If $\widetilde{\Psi} \in \pi_1(\Sp(2n))$ is an element of $\widetilde{\Sp}(2n)$ starting and ending on $\on{Id}$, then
    \[\LCZ(\widetilde{\Psi}\widetilde{\Phi}) = 2\mu(\widetilde{\Psi}) + \LCZ(\widetilde{\Phi})\]
    \item[(d)] (Lower Bound) Let $\rho:\widetilde{\Sp}(2n) \to \R$ be the homogeneous rotation quasi-morphism. Then
    \[\LCZ(\widetilde{\Phi}) \ge 2 \cdot \rho(\widetilde{\Phi}) - n\] 
\end{itemize}
\end{lemma}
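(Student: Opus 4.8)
The plan is to verify properties (a)--(c) by a density/continuity argument reducing to the corresponding axioms of $\CZ$ in Theorem \ref{thm:CZ_index}, and then to prove the lower bound (d) by combining (c) with the case of elements represented by paths generated by symmetric matrices, where both $\LCZ$ and $\rho$ can be computed explicitly. Throughout I would use that $\twSp_\star(2n)$ is open and dense in $\twSp(2n)$ and that $\CZ$ is locally constant on it, so that for any $\wt\Phi \in \twSp(2n)$ there is a sequence $\wt\Phi_j \in \twSp_\star(2n)$ with $\wt\Phi_j \to \wt\Phi$, and $\LCZ(\wt\Phi)$ is by definition the infimum over such sequences of $\liminf_j \CZ(\wt\Phi_j)$. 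A first observation is that this infimum is actually attained and that $\LCZ$ is upper semicontinuous from the non-degenerate side: since $\CZ$ is continuous (hence locally constant) on $\twSp_\star(2n)$, $\LCZ(\wt\Phi) = \CZ(\wt\Phi)$ for $\wt\Phi$ non-degenerate, and in general $\LCZ \le \liminf$ over \emph{any} approaching sequence.

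For (a): conjugation $\wt\Psi(\cdot)\wt\Psi^{-1}$ is a homeomorphism of $\twSp(2n)$ preserving $\twSp_\star(2n)$ (eigenvalues are conjugation-invariant), so it carries approximating sequences to approximating sequences and $\CZ$ is invariant on the non-degenerate ones by Theorem \ref{thm:CZ_index}(a); taking infima gives the claim. For (c): multiplication by a fixed loop $\wt\Psi \in \pi_1(\Sp(2n))$ is again a homeomorphism of $\twSp(2n)$ preserving non-degeneracy, and on $\twSp_\star(2n)$ we have $\CZ(\wt\Psi\wt\Phi_j) = 2\mu(\wt\Psi) + \CZ(\wt\Phi_j)$; passing to $\liminf$ and then $\inf$ over sequences $\wt\Phi_j \to \wt\Phi$ (which correspond bijectively to sequences $\wt\Psi\wt\Phi_j \to \wt\Psi\wt\Phi$) yields the identity. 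For (b), the subtlety is that one factor, say $\wt\Psi$, may be taken non-degenerate while $\wt\Phi$ is arbitrary; one approximates $\wt\Phi$ by non-degenerate $\wt\Phi_j$, uses that $\wt\Psi \oplus \wt\Phi_j$ is then non-degenerate with $\CZ(\wt\Psi \oplus \wt\Phi_j) = \CZ(\wt\Psi) + \CZ(\wt\Phi_j)$ by Theorem \ref{thm:CZ_index}(b), and checks that the infimum over approximating sequences of $\wt\Phi$ computes $\LCZ(\wt\Phi)$ and is unaffected by the fixed summand $\wt\Psi$ (one should also note that a general approximating sequence of $\wt\Psi \oplus \wt\Phi$ can be compared to one of block form by openness of non-degeneracy, so no smaller $\liminf$ arises).

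The heart of the lemma is (d). Here the strategy is: first prove the inequality $\LCZ(\wt\Phi) \ge 2\rho(\wt\Phi) - n$ for a dense, well-understood family, then extend. The natural family is the elements $\wt\Phi$ represented by paths $t \mapsto \exp(2\pi \Omega A t)$, $t \in [0,1]$, for symmetric $A$; any $\wt\Phi$ is a product of such a generator with a loop in $\pi_1(\Sp(2n))$ up to homotopy, so by (c) and the homogeneity and loop-behavior of $\rho$ (the loop contributes $2\mu$ on the left and $\mu$-type terms to $\rho$ consistently) it suffices to treat the generators. For a generator with $A$ non-degenerate and all eigenvalues of small norm, Theorem \ref{thm:CZ_index}(d) gives $\CZ = \tfrac12\on{sign}(A)$, while $\rho$ of the same path is computed from the unitary part of $\exp(2\pi\Omega A t)$ and can be bounded above by (number of positive eigenvalues)$/2 \le$ something controlled by $\tfrac12(\on{sign}(A) + n)$; chasing the arithmetic gives $2\rho - n \le \on{sign}(A)/2 \le \CZ$. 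For a general generator one approximates $A$ by non-degenerate symmetric matrices $A_j \to A$ with eigenvalues still of small norm; then $\wt\Phi_j \to \wt\Phi$, $\CZ(\wt\Phi_j) = \tfrac12\on{sign}(A_j)$, and — crucially — $\rho$ is \emph{continuous} (being a continuous quasimorphism, indeed $\rho = r$ up to a bounded error and $r$ is given by the continuous integral formula in Proposition \ref{prop:trace_estimate_r}), so $\rho(\wt\Phi_j) \to \rho(\wt\Phi)$. Taking $\liminf$ in $2\rho(\wt\Phi_j) - n \le \CZ(\wt\Phi_j)$ and then the infimum over approximating sequences gives $2\rho(\wt\Phi) - n \le \LCZ(\wt\Phi)$.

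The main obstacle I expect is the eigenvalue bookkeeping in (d): making precise the claim that for the generator $\exp(2\pi\Omega A t)$ one has $2\rho \le \on{sign}(A) + n$ (equivalently controlling the winding of $\det_\C$ of the unitary part by the inertia of $A$), and checking this survives both the small-eigenvalue normalization and the reduction-to-generators step via (c). A clean way around the normalization issue is to first reduce a general $\wt\Phi$ to a generator with small-norm eigenvalues by multiplying by a suitable loop (using that $\rho$ and $\LCZ$ transform compatibly under $\pi_1(\Sp(2n))$, already established in (c)), so that only the small-eigenvalue case of Theorem \ref{thm:CZ_index}(d) is ever needed; the remaining point is then the purely linear-algebraic inequality relating $\rho$ of such a generator to $\on{sign}(A)$, which should follow by diagonalizing $A$ and computing the rotation contributed by each eigenvalue block.
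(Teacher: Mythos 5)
Your plan for (a) and (c) matches the paper's (the paper simply says these follow immediately from the corresponding axioms of $\CZ$, which is exactly your homeomorphism-plus-infimum argument). But for (b) and (d) the paper does not argue from scratch: it cites Ginzburg--G\"urel \cite{gg2016} (Lemma 4.3 for the direct sum, Eq.\ 4.6 for the lower bound, stated there via the mean index $\hat{\mu}=2\rho$). Your attempt to prove these two parts directly has genuine gaps.

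For (b), the definition of $\LCZ(\widetilde{\Phi}\oplus\widetilde{\Psi})$ takes the infimum of $\liminf\CZ$ over \emph{all} non-degenerate sequences in $\widetilde{\Sp}(2(n+m))$ converging to $\widetilde{\Phi}\oplus\widetilde{\Psi}$, not just block-diagonal ones. Block sequences give the inequality $\le$, but the inequality $\ge$ requires showing that a non-block perturbation cannot achieve a strictly smaller $\liminf$ of $\CZ$ than the sum of the two block-wise infima. Your parenthetical that such a sequence ``can be compared to one of block form by openness of non-degeneracy'' is not an argument --- openness of non-degeneracy gives no control on how $\CZ$ jumps as you deform a non-block perturbation to a block one through the degenerate locus. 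This comparison is precisely the content of \cite[Lemma 4.3]{gg2016}; it is the whole difficulty of the statement.

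For (d), the reduction step fails as stated: not every element of $\widetilde{\Sp}(2n)$ is, up to a loop in $\pi_1(\Sp(2n))$, represented by a path $t\mapsto\exp(2\pi\Omega At)$ with $A$ symmetric, because the exponential map $\mathfrak{sp}(2n)\to\Sp(2n)$ is not surjective (already in $\Sp(2)=\mathrm{SL}(2,\R)$, matrices of trace less than $-2$ are not exponentials of real traceless matrices). So the family you propose to extend from is not dense-up-to-loops in the required sense, and the case analysis never covers a general $\widetilde{\Phi}$. Two further points need repair even where the reduction applies: (i) your justification that $\rho$ is continuous --- ``$\rho=r$ up to a bounded error and $r$ is continuous'' --- is invalid, since being within bounded distance of a continuous function does not imply continuity (the correct route is the identification of $2\rho$ with the mean index, whose continuity is proved in \cite{gg2016}); and (ii) the ``purely linear-algebraic inequality'' bounding the winding of $\det_\C$ of the unitary part of $\exp(2\pi\Omega At)$ by the inertia of $A$ is exactly the substance of the claim and is left unproved. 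Given that both (b) and (d) reduce to nontrivial facts established in \cite{gg2016}, the honest options are to cite that reference, as the paper does, or to supply these two arguments in full.
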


\noindent The naturality and Maslov index properties follow immediately from the same properties of $\CZ$. The direct sum property is \cite[Lemma 4.3, p. 45]{gg2016} and the lower bound is given in \cite[Eq. 4.6, p. 43]{gg2016}. Note that the lower bound in \cite{gg2016} is stated in terms of the mean index $\hat{\mu}$ (see \cite[p. 41]{gg2016}).

\vspace{3pt}

As an example, we calculate $\LCZ$ in the case of paths in $U(1)$. We will use this calculation in \S \ref{sec:toric_formula}.

\begin{lemma} \label{lem:LCZ_of_U1_element} Let $\widetilde{u}(\theta) \in \widetilde{\Sp}(2)$ for $\theta \in \R$ be the homotopy class of the path
\[[0,1] \to \on{U}(1) \subset \Sp(2) \qquad\text{where}\qquad t \mapsto \on{exp}(2\pi i \theta \cdot t) \]
Then $\LCZ(\widetilde{u}(\theta))$ is given by $2 \lceil \theta \rceil - 1$. As a special case, we have $\LCZ(\widetilde{\on{Id}}_2) = -1$.
\end{lemma}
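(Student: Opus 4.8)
The plan is to compute $\CZ(\widetilde{u}(\theta))$ for non-integer $\theta$ using the axioms of Theorem \ref{thm:CZ_index}, then pass to $\LCZ$ by taking the appropriate infimum as dictated by Definition \ref{def:LCZ_index}. First I would treat the non-degenerate case: if $\theta \notin \Z$, then $\on{exp}(2\pi i \theta) \neq 1$, so $\widetilde{u}(\theta) \in \widetilde{\Sp}_\star(2)$ and $\CZ$ is defined. To pin down its value, I would use the Signature axiom (d) together with the Maslov axiom (c). Concretely, for $0 < \theta < 1$ the path $t \mapsto \on{exp}(2\pi i \theta t)$ is (up to a reparametrization/homotopy rel endpoints inside $\widetilde{\Sp}_\star$) close to the path $t \mapsto \on{exp}(2\pi \Omega A t)$ for a small negative-definite $A$ — here one uses that $\Omega$ acting on $\R^2$ generates exactly the rotation $\on{exp}(2\pi i t)$ with the right sign convention, and that scaling $A$ toward $0$ keeps the endpoint non-degenerate while not crossing $\on{Id}$ in between. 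The Signature axiom then gives $\CZ = \tfrac12 \on{sign}(A) = -\tfrac12 \cdot 2 = -1$ for $0 < \theta < 1$. Actually to be careful about sign conventions I would instead directly check a single normalization against a known reference (e.g. \cite{g2014}) and then propagate; the robust statement is $\CZ(\widetilde u(\theta)) = 2\lceil\theta\rceil - 1$ for $\theta \notin \Z$, which for $0<\theta<1$ reads $\CZ = 1$ — so I will fix the convention by that formula and verify the base case carefully.

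Next I would bootstrap to all non-integer $\theta$ using the Maslov axiom. The loop $t \mapsto \on{exp}(2\pi i t)$, $t \in [0,1]$, represents a generator $\sigma$ of $\pi_1(\Sp(2)) \cong \Z$ with Maslov index $\mu(\sigma) = 1$; concatenating $\widetilde u(\theta)$ with $k$ copies of this loop gives $\widetilde u(\theta + k)$. Axiom (c) then yields $\CZ(\widetilde u(\theta+k)) = 2k + \CZ(\widetilde u(\theta))$, so once the base case on $(0,1)$ is fixed, $\CZ(\widetilde u(\theta)) = 2\lceil \theta \rceil - 1$ holds for all $\theta \in \R \setminus \Z$.

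Finally I would compute $\LCZ(\widetilde u(m))$ at integer $\theta = m$. By Definition \ref{def:LCZ_index}, $\LCZ(\widetilde u(m)) = \inf\{\liminf_j \CZ(\widetilde\Phi_j) : \widetilde\Phi_j \to \widetilde u(m)\}$. Approaching $\widetilde u(m)$ within the $1$-parameter family $\widetilde u(m + \eps)$ gives $\CZ(\widetilde u(m+\eps)) = 2m+1$ for small $\eps > 0$ and $2m - 1$ for small $\eps < 0$; more general approximating sequences can only do as well as, but not better than, the worst nearby stratum, which one checks is the $\eps \to 0^-$ side giving value $2m-1$ — this is exactly the content of lower semicontinuity, and it matches $2\lceil m \rceil - 1 = 2m-1$. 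I would justify the lower bound "$\LCZ(\widetilde u(m)) \ge 2m-1$" either by invoking the lower bound Lemma \ref{lem:LCZ_index}(d), since the homogeneous rotation quasimorphism evaluates to $\rho(\widetilde u(m)) = m$ (it restricts to the lift of $\det_\C$ on $\U(1)$ by Theorem \ref{thm:quasi_homo_twSp}), giving $\LCZ(\widetilde u(m)) \ge 2m - 1$; combined with the explicit approximating sequence achieving $2m-1$ this forces equality. Specializing to $m = 0$ gives $\LCZ(\widetilde{\on{Id}}_2) = -1$. The main obstacle I anticipate is nailing the sign/normalization convention in the base case $0 < \theta < 1$ — reconciling the Signature axiom's orientation with the direction of the rotation $\on{exp}(2\pi i \theta t)$ — since everything else is a clean induction via the Maslov axiom plus a semicontinuity argument pinned by the quasimorphism lower bound.
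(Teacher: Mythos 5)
Your proposal is correct and follows essentially the same route as the paper: the signature axiom pins down the value on a base interval, the Maslov axiom propagates to all non-integer $\theta$, and lower semicontinuity handles the integers. One small point to tighten: for $0<\theta<1$ the path is exactly $t\mapsto\exp(2\pi\Omega A t)$ with $A=\theta I_2$, which is \emph{positive} definite, so the signature axiom gives $\CZ=+1$ directly --- your initial guess of a negative-definite generator was the wrong sign, though your corrected formula $2\lceil\theta\rceil-1$ is right (the paper instead takes $(-1,0]$ as the base interval, where $A$ is negative definite and $\CZ=-1$, which lets it absorb the integer case into the base case). Your justification of the lower bound $\LCZ(\widetilde{u}(m))\ge 2m-1$ at integers via Lemma \ref{lem:LCZ_index}(d) and $\rho(\widetilde{u}(m))=m$ is more explicit than the paper's, which simply asserts $\LCZ(\widetilde{u}(0))=-1$.
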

\begin{proof} By the signature property in Theorem \ref{thm:CZ_index}, we can directly compute that
\[
\CZ(\widetilde{u}(\theta)) = -1 \text{ if }\theta \in (-1,0) \qquad\text{and}\qquad \LCZ(\widetilde{u}(\theta)) = -1 \text{ if }\theta = 0
\]
Since we can write $\theta = \lceil \theta \rceil + r$ for $r \in (-1,0]$, the Maslov index property then implies that
\[\LCZ(\widetilde{u}(\theta)) = \LCZ(\widetilde{u}(\lceil \theta \rceil) \cdot \widetilde{u}(r)) = 2\mu(\widetilde{u}(\lceil \theta \rceil)) + \LCZ(\widetilde{u}(r)) = 2 \lceil \theta \rceil - 1 \qedhere\] \end{proof}

\subsection{Indices Of Orbits} \label{subsec:indices_of_orbits} We conclude this section by discussing the Conley-Zehnder index of Hamiltonian and Reeb orbits.  

\begin{definition} \label{def:LCZ_Hamiltonian} Let $X$ be a symplectic manifold with $c_1(TX) = 0$ and let $H:X \to \R$ be a Hamiltonian. The (lower semi-continuous) Conley-Zehnder index
\[
\LCZ(X,H;\gamma) \qquad \text{or simply} \qquad \LCZ(\gamma) 
\]
of a contractible periodic Hamiltonian orbit $\gamma:[0,T] \to X$ is defined as follows. Let $T\Phi$ be the differential of the Hamiltonian flow. Choose a disk $\Sigma \subset X$ bounded by $\gamma$ and a trivialization $\tau:TX|_\Sigma \simeq \C^n$. Let $\widetilde{\Psi}_\tau \in \widetilde{\Sp}(2n)$ be the homotopy class of the path
\begin{equation} \label{eqn:linearized_flow_Hamiltonian}
\Psi_\tau:[0,T] \to \Sp(2n) \qquad\text{given by}\qquad \Phi_\tau(t) = \tau_{\gamma(t)} \circ T\Phi(t,z) \circ \tau_{\gamma(0)}^{-1} \in \Sp(2n)
\end{equation}
We define $\LCZ(X,H;\gamma)$ to be $\LCZ(\widetilde{\Psi}_\tau)$. Since $c_1(TX) = 0$, this is independent of $\Sigma$. \end{definition}

\begin{definition} \label{def:LCZ_Reeb} Let $(Y,\xi)$ be a closed contact manifold with $c_1(\xi) = 0$ and let $\alpha$ be a contact form. The (lower semi-continuous) Conley-Zehnder index
\[
\LCZ(Y,\alpha;\gamma)\qquad\text{or simply}\qquad\LCZ(\gamma)
\]
of a contractible periodic Reeb orbit $\gamma:[0,T] \to Y$ is defined as follows. Let $T\Phi|_\xi$ be the differential of the Reeb flow restricted to $\xi$. Choose a disk $\Sigma$ bounded by $\gamma$ and a trivialization $\tau:\xi|_\Sigma \simeq \C^{n-1}$. Let $\widetilde{\Psi}_\tau \in \widetilde{\Sp}(2n-2)$ be the homotopy class of the path
\begin{equation} \label{eqn:linearized_flow_Reeb}
\Psi_\tau:[0,T] \to \Sp(2n) \qquad\text{given by}\qquad \Phi_\tau(t) = \tau_{\gamma(t)} \circ T\Phi|_\xi(t,y) \circ \tau_{\gamma(0)}^{-1}
\end{equation}
We define $\LCZ(Y,\alpha;\gamma)$ to be $\LCZ(\widetilde{\Psi}_\tau)$. Since $c_1(TX) = 0$, this is independent of $\Sigma$.   
\end{definition}

In the case of a Liouville domain, these two versions of $\LCZ$ can be related.

\begin{lemma} Let $(X,\lambda)$ be a Liouville domain with boundary $(Y,\xi)$. Fix an contractible loop
\[\gamma:[0,T] \to Y = \partial X \qquad\text{with}\qquad \gamma(0) = \gamma(T)\]
that is an orbit of the canonical Hamiltonian $H_X$, or equivalently a Reeb orbit of $\lambda|_Y$. Then
\[\LCZ(X,H_X;\gamma) = \LCZ(Y,\lambda|_Y;\gamma) - 1\] 
\end{lemma}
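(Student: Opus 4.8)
The plan is to compare the two linearized flows directly. Given the Reeb orbit $\gamma$ on $Y = \partial X$, the tangent space $T_{\gamma(t)}X$ along $\gamma$ splits symplectically as $\xi_{\gamma(t)} \oplus L_{\gamma(t)}$, where $L_{\gamma(t)} = \operatorname{span}\{Z, R\}$ is the symplectic plane spanned by the Liouville vector field $Z$ and the Reeb vector field $R = X_{H_X}$. This is a $T\Phi$-invariant splitting: the Hamiltonian flow $\Phi$ of $H_X$ preserves level sets (so it preserves $\xi$) and commutes suitably with the Liouville flow along $\gamma$, so $L$ is preserved as well. First I would make this splitting precise and observe that with respect to it, the linearized flow decomposes as $T\Phi(t) = T\Phi|_\xi(t) \oplus T\Phi|_L(t)$. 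Choosing a trivialization $\tau_\xi$ of $\xi$ over a spanning disk and extending by the canonical frame $\{Z,R\}$ of $L$ gives a trivialization $\tau$ of $TX$, and the resulting path in $\widetilde{\Sp}(2n)$ is the direct sum $\widetilde{\Psi}_\xi \oplus \widetilde{\Psi}_L$. By the Direct Sum property of $\LCZ$ (Lemma \ref{lem:LCZ_index}(b)), it then suffices to show $\LCZ(\widetilde{\Psi}_L) = -1$.

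For the $L$-factor, the key point is that the Hamiltonian flow acts on the $\{Z,R\}$-plane along the closed orbit $\gamma$ in an essentially trivial way: since $H_X$ is homogeneous of degree one with respect to $Z$ (i.e.\ $ZH_X = H_X$), the Hamiltonian vector field $R = X_{H_X}$ satisfies $[Z,R] = 0$ (or more precisely $\mathcal{L}_Z R = 0$ after accounting for the homogeneity, which is exactly the statement that the Reeb flows on different level sets are conjugate via the Liouville flow). Concretely, in the frame $\{Z, R\}$ the linearized return map over one period $T$ is the identity on the $R$-direction (since $\gamma$ closes up) and is $2 \times 2$ upper-triangular with $1$'s on the diagonal, i.e.\ a shear. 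Its homotopy class in $\widetilde{\Sp}(2)$ is the class $\widetilde{u}(\theta)$ for $\theta$ just below $0$, or exactly $\widetilde{\operatorname{Id}}_2$, depending on the sign of the shear and the chosen trivialization. In either case Lemma \ref{lem:LCZ_of_U1_element} gives $\LCZ = -1$. I would carry out the explicit computation of this $2 \times 2$ path — writing $\Psi_L(t)$ in the $\{Z,R\}$ frame using $\dot{\Psi}_L = J A \Psi_L$ with $A = \nabla^2 H_X$ restricted to $L$, noting $A R = 0$ along $\gamma$ — to pin down that the homotopy class is exactly the one computed in Lemma \ref{lem:LCZ_of_U1_element}.

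The main obstacle is the bookkeeping around the trivialization and the homotopy class of the $L$-factor: one must verify that extending $\tau_\xi$ by the canonical frame $\{Z,R\}$ genuinely extends over the spanning disk $\Sigma$ (so that the disk/trivialization data in Definitions \ref{def:LCZ_Hamiltonian} and \ref{def:LCZ_Reeb} are compatible) and that the degree-zero ambiguity in the $2 \times 2$ path doesn't shift the index by an even integer via the Maslov property (Lemma \ref{lem:LCZ_index}(c)). Since $c_1(TX) = 0$ makes both indices trivialization-independent, the cleanest route is: fix $\tau_\xi$ and the disk $\Sigma$ computing $\LCZ(Y,\lambda|_Y;\gamma)$, extend to $\tau$ on $TX|_\Sigma$ using $\{Z,R\}$ (which is well-defined and nonvanishing on all of $\Sigma$ since $Z$ points outward and $R$ is transverse), and then the identity $\LCZ(X,H_X;\gamma) = \LCZ(\widetilde{\Psi}_\xi) + \LCZ(\widetilde{\Psi}_L) = \LCZ(Y,\lambda|_Y;\gamma) + (-1)$ follows. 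Everything else is the routine $2 \times 2$ shear computation.
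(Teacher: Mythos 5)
Your proposal follows essentially the same route as the paper: split $TX|_\gamma$ as $\xi \oplus \operatorname{span}\{Z,R\}$, extend the trivialization of $\xi$ over the disk by the canonical frame of the rank-two factor, and apply the direct sum property of $\LCZ$ together with the computation $\LCZ(\widetilde{\on{Id}}_2) = -1$. The one place you hedge --- whether the $2\times 2$ factor is a shear or the identity --- is resolved cleanly in the paper by noting that $ZH_X = H_X$ forces $[R,Z]=0$, so the flow preserves $Z$ and $R$ individually and the path in the $\{Z,R\}$ frame is exactly the constant path $\on{Id}_2$; this matters because Lemma \ref{lem:LCZ_of_U1_element} only covers paths in $\on{U}(1)$, so a genuine nontrivial shear would require a separate argument (via Definition \ref{def:LCZ_index} and the signature axiom) rather than a direct appeal to that lemma.
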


\begin{proof} Let $Z$ and $R$ denote the Liouville and Hamiltonian vector field, respectively. Also, we adopt the shortened notation $H = H_X$. Note that we have a splitting
\[TX = \xi^\omega \oplus \xi = \on{span}(Z,R) \oplus \xi \simeq \C \oplus \xi\]
Now choose a disk $\Sigma \subset Y$ bounded by $\gamma$ and let $\tau:\xi|_\Sigma \simeq \C^{n-1}$ be the unique isotopy class of trivialization of $\xi|_\Sigma$. Then we may form a trivialization $\sigma:TX|_\Sigma \simeq \C^n$ as the direct sum
\[\sigma = \tau_{\on{std}} \oplus \tau:TX \simeq \C \oplus \xi \simeq \C^n \qquad\text{where}\qquad \tau_{\on{std}}:\C \simeq \C \text{ is the tautological trivialization}\]
The flow $\Phi$ of $H$ preserves $Z$ and $R$. Indeed, $R$ generates $\Phi$, and since $ZH = H$ we have
\[
dH = \mathcal{L}_Z(dH) = \mathcal{L}_Z(\iota_R \omega) = \iota_{[Z,R]}\omega + \iota_R \mathcal{L}_Z\omega = \iota_{[Z,R]}\omega + dH \qquad\text{and so}\qquad [R,Z] = 0
\]
Thus the paths $\Psi_\sigma:[0,T] \to \Sp(2n)$ in (\ref{eqn:linearized_flow_Hamiltonian}) and $\Psi_\tau:[0,T] \to \Sp(2n-2)$ in $(\ref{eqn:linearized_flow_Reeb})$ are related by
\[
\Psi_\sigma = \on{Id}_2 \oplus \Psi_\tau \qquad\text{where}\qquad \on{Id}_2:[0,T] \to \Sp(2)\text{ is the constant path}
\]
By Lemma \ref{lem:LCZ_of_U1_element}, we have $\LCZ(\widetilde{\on{Id}}_2) = -1$. Thus we have
\[\LCZ(X,H;\gamma) = \LCZ(\widetilde{\Psi}_\sigma) = \LCZ(\widetilde{\on{Id}}_2) + \LCZ(\widetilde{\Psi}_\tau) = -1 + \LCZ(Y,\alpha;\gamma)\qedhere\]\end{proof}

As a corollary, we have a different characterization of dynamical convexity in terms of the Hamiltonian flow of the canonical Hamiltonian.

\begin{cor} \label{cor:dyn_cvx_Hamiltonian} A star-shaped domain $X \subset \C^n$ is dynamically convex if and only if the closed orbits $\gamma$ of the canonical Hamiltonian $H_X$ satisfy
\[\LCZ(X,H_X;\gamma) \ge n\]
\end{cor}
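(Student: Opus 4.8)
\emph{Proof proposal.} The plan is to deduce the corollary directly from the preceding lemma together with the definition of dynamical convexity, after reconciling the closed orbits of $H_X$ with the closed Reeb orbits of $\lambda|_{\partial X}$.

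First I would record that since $X \subset \C^n$ is star-shaped, $X$ is diffeomorphic to a ball; in particular $\partial X$ is diffeomorphic to $S^{2n-1}$, $c_1(TX) = 0$, and $H^1(X;\Z) = 0$. Consequently every loop in $X$ or in $\partial X$ is contractible, and the indices $\LCZ(X,H_X;\cdot)$ and $\LCZ(\partial X,\lambda|_{\partial X};\cdot)$ of Definitions \ref{def:LCZ_Hamiltonian} and \ref{def:LCZ_Reeb} are defined for all closed orbits and independent of the auxiliary disk and trivialization.

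Next I would reconcile the two notions of closed orbit. Recall from \S \ref{subsec:convex_Reeb_flows} that $H_X$ is defined on $X \setminus \on{Skel}(X)$, that each level set $H_X^{-1}(E)$ is canonically contactomorphic to $(\partial X,\lambda|_{\partial X})$ via the Liouville flow, and that under this identification the Hamiltonian vector field of $H_X$ corresponds to the Reeb vector field. Hence every closed $H_X$-orbit is carried by the Liouville flow onto a closed $H_X$-orbit lying on $\partial X = H_X^{-1}(1)$, and this correspondence preserves the Conley--Zehnder index: the Liouville flow intertwines the Hamiltonian flows on the two level sets up to a time reparametrization, so the linearized return maps are conjugate paths in $\twSp(2n)$, and $\LCZ$ is unaffected. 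Thus it suffices to verify the stated inequality for closed orbits $\gamma$ contained in $\partial X$, which are exactly the closed Reeb orbits of $\lambda|_{\partial X}$.

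Finally I would apply the lemma: for such $\gamma$ one has $\LCZ(X,H_X;\gamma) = \LCZ(\partial X,\lambda|_{\partial X};\gamma) - 1$. By definition $X$ is dynamically convex iff $(\partial X,\lambda|_{\partial X})$ is, i.e.\ iff $\LCZ(\partial X,\lambda|_{\partial X};\gamma) \ge n+1$ for every closed Reeb orbit $\gamma$ (here $\partial X$ has dimension $2n-1$, so the threshold is $n+1$); by the displayed identity this is equivalent to $\LCZ(X,H_X;\gamma) \ge n$ for every such $\gamma$, hence for every closed orbit of $H_X$. I expect the only real (and minor) obstacle to be the bookkeeping in the third paragraph, namely making precise that rescaling by the Liouville flow gives an index-preserving bijection between closed $H_X$-orbits and closed Reeb orbits of $\lambda|_{\partial X}$; everything else is a direct translation between definitions. \qed
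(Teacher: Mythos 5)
Your argument is correct and is essentially the paper's (implicit) proof: the corollary is stated as an immediate consequence of the preceding lemma $\LCZ(X,H_X;\gamma) = \LCZ(\partial X,\lambda|_{\partial X};\gamma) - 1$ combined with the definition of dynamical convexity, which is exactly what you do. Your extra bookkeeping about transporting closed $H_X$-orbits from interior level sets to $\partial X$ via the Liouville flow (which commutes with the Hamiltonian flow since $[R,Z]=0$, so the linearized flows are conjugate and $\LCZ$ is preserved) is a reasonable way to make that reduction precise.
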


\section{Ruelle Density And Invariant} \label{sec:Ruelle_density} In this section, we construct the Ruelle invariant of a symplectic cocycle of a flow on a compact manifold, and demonstrate its basic properties.

\subsection{Rotation Function} \label{subsec:rotation_function} For the rest of the section, we fix a flow and a symplectic cocycle.
\[\phi:\R \times Y \to Y \qquad \Phi:\rho^*E \to \phi^*E\]
We also fix a $\phi$-invariant Borel measure $\mu$. Our construction of the Ruelle invariant requires an auxilliary family of maps
\[\tilde{u}_T:Y \to \R \qquad\text{also denoted by} \qquad \tilde{u}_T(\Phi;J,\tau):Y \to \R \qquad\text{for }T \in [0,\infty)\]
depending on a choice of complex structure $J$ and trivialization $\tau$. We refer to $\tilde{u}_T$ as the \emph{rotation function} at time $T$. The goal of this subsection is to define the rotation function and prove some basic properties.

\vspace{3pt}

Let us, first, recall the definitions of the various auxilliary data required to build $\tilde{u}_T$. 

\begin{definition} A (compatible) \emph{complex structure} $J$ on $E$ is an bundle map $J:E \to E$ such that
\[J^2 = -\on{Id} \qquad\text{and}\qquad \omega(J-,-) \text{ is a fiberwise inner product}\]
\end{definition}

A choice of compatible complex structure $J$ gives $E$ the structure of a Hermitian vector bundle. Standard results in algebraic topology (cf. \cite{ms2017}) state that the space $\mathcal{J}(E,\omega)$ of compatible complex structures on $E$ is contractible. Moreover, any two choices $J,J'$ of such complex structures yield isomorphic Hermitian vector bundles $(E,\omega,J) \simeq (E,\omega,J')$.

\begin{definition} The \emph{determinant bundle} $\Lambda E$ of $(E,J)$ is the maximal wedge power of $E$ as a complex vector bundle. That is
\[\Lambda E := \wedge^k\big(E\big) \qquad\text{where}\qquad k = \on{rank}_\C(E) \]
A \emph{trivialization} $\tau:\Lambda E \simeq \C$ is a unitary bundle map to the trivial bundle. \end{definition} 

The determinant bundle of $E$ is independent of $J$ up to (homotopically unique) isomorphism. In particular, the set of homotopy classes of trivialization
\[
\mathcal{T}(\Lambda E) := \{\text{trivializations }\tau:\Lambda E \simeq \C\} \; \big/ \text{ homotopy }
\]
is well-defined, without reference to a specific choice of $J$. The determinant bundle admits a trivialization if and only if $c_1(E) = c_1(\Lambda E) = 0$. Furthermore, the space of trivializations is naturally a torsor over $[Y,S^1] \simeq H^1(Y;\Z)$.

\vspace{3pt}

We are now ready to proceed with the construction of the rotation function.

\begin{construction} Choose a compatible complex structure $J$ on $E$ and an explicit unitary trivialization $\tau:\Lambda E \simeq \C$ in the chosen class. Start by taking the polar decomposition of $\Phi$
\[\Phi:\rho^*E \xrightarrow{P} \rho^*E \xrightarrow{U} \phi^*E\]
Here $P$ is self-adjoint and $U$ is unitary with respect to $\omega$ and $J$. The determinant $\Lambda U$ of $U$ and the trivialization $\tau$ determine a unitary bundle map
\[u:\C \xrightarrow{\tau^{-1}} \rho^*\Lambda E \xrightarrow{\Lambda U} \phi^*\Lambda E \xrightarrow{\tau} \C \quad\text{or equivalently a map}\quad u:\R \times Y \to U(1)\]
The map $u$ sends $0 \times Y$ to $1 \in U(1)$, and is therefore null-homotopic. Thus there is a unique lift
\[
\tilde{u}:\R \times Y \to \R \qquad \text{such that}\qquad \exp(2\pi i \cdot \tilde{u}) = u \quad\text{and}\quad \tilde{u}|_{0 \times Y} = 0
\]
The rotation function $\tilde{u}_T$ is simply this lift at time $T$, i.e. $\tilde{u}(T,-)$. \end{construction}

\vspace{3pt}

We may view these maps as a version of the rotation quasimorphism applied pointwise in $Y$ to $\Phi$. To make this precise, it will be helpful to fix some notation.

\begin{notation}\label{notation:lift_cocycle} Given a trajectory $\gamma:[0,T] \to Y$ of $\phi$ and a trivialization $\Xi:E|_\gamma \simeq \C^n$, we let
\[
\Phi_\Xi:[0,T] \to \on{Sp}(2n) \qquad\text{denote the map}\qquad \Phi_\Xi(t) := \Xi(t) \circ \Phi(t,x) \circ \Xi(0)^{-1}
\]
Furthermore, we let $\tilde{\Phi}_\Xi$ denote the unique lift of $\Phi_\Xi$ to the universal cover satisfying $\tilde{\Phi}_\Xi(0) = \tilde{\on{Id}}$. 
\end{notation}

\begin{lemma}[Quasimorphism] \label{lem:rotation_function_vs_rotation_quasimorphism} Let $\gamma:[0,T] \to Y$ be a trajectory of $\phi$ with $\gamma(0) = X$ and let $\Xi:E|_\gamma \simeq \C^n$ be a unitary trivialization of $E$ over $\gamma$ such that the map $\Lambda\Xi:\Lambda E|_\gamma \simeq \C$ is $\tau|_\gamma$. Then
\[
\tilde{u}_T(x) = r(\widetilde{\Phi}_\Xi)
\]
\end{lemma}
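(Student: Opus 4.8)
The plan is to unwind both sides to explicit expressions in terms of the unitary part $U$ of the polar decomposition of $\Phi$ along $\gamma$, and then observe that they agree because both are the winding number of the same $U(1)$-valued path. First I would recall that by Lemma \ref{lem:polar_derivative} (or rather the functoriality of the polar decomposition) the unitary part of $\Phi_\Xi(t)$, computed in the trivialization $\Xi$, is exactly $U_\Xi(t) := \Xi(t)\circ U(t,x)\circ\Xi(0)^{-1}$, where $U$ is the bundle-level unitary part; this uses that $\Xi$ is a \emph{unitary} trivialization, so it intertwines the polar decomposition on $E$ with the polar decomposition on $\C^n$. Consequently, by the definition of the determinant quasimorphism $r$ (Example \ref{ex:determinant_quasimorphism}), the value $r(\widetilde{\Phi}_\Xi)$ is the lift to $\R$ of the loop $t\mapsto \det_\C(U_\Xi(t))$ starting at $0$, evaluated at $t=T$.

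Next I would compare this with the construction of $\tilde u_T$. By definition $\tilde u_T(x)$ is the value at $T$ of the unique lift $\tilde u:\R\times Y\to\R$ of the map $u:\R\times Y\to U(1)$ obtained as $\tau\circ\Lambda U\circ\tau^{-1}$, with $\tilde u|_{0\times Y}=0$. Restricting to the trajectory $\gamma$ and using the hypothesis $\Lambda\Xi=\tau|_\gamma$, the map $\Lambda\Xi$ identifies $\Lambda E|_\gamma$ with $\C$ compatibly with $\tau$, so along $\gamma$ the map $u(t,x)$ equals $\Lambda\Xi(t)\circ\Lambda U(t,x)\circ\Lambda\Xi(0)^{-1}$, which under the standard identification $\Lambda(\C^n)\simeq\C$ is precisely $\det_\C(U_\Xi(t))$. (Here one uses the elementary fact that for a unitary matrix $M$, the induced map on $\wedge^n\C^n\simeq\C$ is multiplication by $\det_\C M$.) Thus along $\gamma$ the two $U(1)$-valued paths coincide, both start at $1\in U(1)$ at $t=0$, and hence their unique lifts to $\R$ starting at $0$ coincide for all $t$; evaluating at $t=T$ gives $\tilde u_T(x) = r(\widetilde\Phi_\Xi)$.

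The main obstacle — really the only non-bookkeeping point — is the first step: verifying that a unitary trivialization of the bundle commutes with taking polar decomposition, i.e.\ that polar decomposition is natural under unitary bundle isomorphisms. This is where compatibility of $J$ with $\omega$ is used (the polar decomposition relevant here is the one with respect to the Hermitian structure determined by $J$), and it is the reason the hypothesis ``$\Xi$ unitary'' appears; without it the unitary part of $\Phi_\Xi$ would not be the conjugate of the bundle unitary part $U$. A secondary, genuinely trivial point to address carefully is the normalization of lifts: both $\tilde u$ and $\tilde\Phi_\Xi$ (hence $r(\tilde\Phi_\Xi)$ as a real number rather than an element of $\R/\Z$) are pinned down by their value at $t=0$, so the two lifts agree on the nose, not just modulo $\Z$.
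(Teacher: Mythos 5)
Your proposal is correct and follows essentially the same route as the paper's proof: conjugate the polar decomposition through the unitary trivialization to identify $U_\Xi(t) = \Xi(t)\circ U(t,x)\circ\Xi(0)^{-1}$, observe that $\det_\C(U_\Xi(t)) = u(t,x)$ via $\Lambda\Xi = \tau|_\gamma$, and conclude by uniqueness of lifts pinned at $0$. The two points you flag as the real content (naturality of the polar decomposition under unitary maps, and the normalization of lifts) are exactly the ones the paper's argument rests on.
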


\begin{proof} Since $\Xi$ is unitary, the unitary parts $U_\Xi$ of $\Phi_\Xi$ and $U$ of $\Phi$ are related by
\[
U_\Xi(t) := \Xi(t) \circ U(t,x) \circ \Xi(0)^{-1} \in \Sp(2n)
\]
The map on the determinant bundle $\Lambda \C^n = \C$ induced by $U_\Xi$ is simply the determinant over $\C$. In particular, we have
\[
\on{det}_\C(U_\Xi(t)) =  \Lambda \Xi(t) \circ \Lambda U(t,x) \circ \Lambda \Xi(0)^{-1} = \tau(\phi(t,x)) \circ \Lambda U(t,x) \circ \tau^{-1}(x) = u(t,x)
\]
In particular, the maps $r \circ \widetilde{\Phi}_\Xi:[0,T] \to \R$ and $\tilde{u}(x):[0,T] \to \R$ are both lifts of the same map $[0,T] \to U(1)$ that are $0$ at $t = 0$. This implies that they agree, proving the result. \end{proof}

The rotation functions at time $T$ essentially define a sub-additive process in the sense of Kingman \cite{k1973}. We use the following definition, which specializes the one in \cite{k1973} to our setting.

\begin{definition} \label{def:subadditive_process} A \emph{sub-additive process} $f_T:Y \to \R$ for $T \in [0,\infty)$ for the dynamical system $(Y,\phi)$ with invariant measure $\mu$ is a family of $\mu$-integrable functions that, for some $C > 0$, satisfy
\[f_{S+T} \le f_S + \phi^*_Sf_T + C \qquad\qquad \int_Y f_T\cdot \mu \ge -C \cdot T \qquad\qquad \int_Y \big(\sup_{0 \le S \le 1} |f_S|\big) \cdot \mu < \infty\]
\end{definition}

\begin{lemma} \label{lem:subadditive_process} The family of maps $\tilde{u}_T$ are a sub-additive process for $(Y,\phi)$ and $\mu$.
\end{lemma}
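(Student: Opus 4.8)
The plan is to verify the three conditions in Definition \ref{def:subadditive_process} directly, using the quasimorphism interpretation from Lemma \ref{lem:rotation_function_vs_rotation_quasimorphism} together with the cocycle identity. Fix a compatible complex structure $J$ and trivialization $\tau$ used to define $\tilde{u}_T$. The key structural fact is that for a trajectory $\gamma$ starting at $x$, the value $\tilde{u}_{S+T}(x)$ can be compared, via the bounded defect of the determinant quasimorphism $r$ (Lemma \ref{lem:determinant_quasimorphism}), to $\tilde{u}_S(x) + \tilde{u}_T(\phi(s,x))$. Concretely, choose a unitary trivialization $\Xi$ of $E$ over the time-$(S+T)$ trajectory with $\Lambda\Xi = \tau$; by Lemma \ref{lem:rotation_function_vs_rotation_quasimorphism}, $\tilde{u}_{S+T}(x) = r(\widetilde{\Phi}_\Xi)$, $\tilde{u}_S(x) = r(\widetilde{\Phi}_\Xi|_{[0,S]})$, and $\tilde{u}_T(\phi(S,x)) = r(\widetilde{\Psi})$ where $\widetilde{\Psi}$ is the lift of the path $t\mapsto \Xi(S+t)\circ\Phi(t,\phi(S,x))\circ\Xi(S)^{-1}$. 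The cocycle property $\Phi(S+t,x) = \Phi(t,\phi(S,x))\Phi(S,x)$ translates, after conjugating by $\Xi$, into $\widetilde{\Phi}_\Xi(S+t) = \widetilde{\Psi}(t)\cdot\widetilde{\Phi}_\Xi(S)$ in $\widetilde{\on{Sp}}(2n)$, so the sub-additivity inequality $|\tilde{u}_{S+T}(x) - \tilde{u}_S(x) - \tilde{u}_T(\phi(S,x))| = |r(\widetilde{\Psi}\cdot\widetilde{\Phi}_\Xi(S)) - r(\widetilde{\Phi}_\Xi(S)) - r(\widetilde{\Psi})| \le C$ is exactly the quasimorphism estimate of Lemma \ref{lem:determinant_quasimorphism}, with $C$ independent of $x,S,T$. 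This gives the first condition (in fact a two-sided bound, which is stronger than sub-additivity).

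For the second condition, the lower bound $\int_Y \tilde{u}_T\cdot\mu \ge -C\cdot T$, I would again use Lemma \ref{lem:rotation_function_vs_rotation_quasimorphism} to write $\tilde{u}_T(x) = r(\widetilde{\Phi}_\Xi)$ and split $[0,T]$ into $\lceil T\rceil$ unit subintervals; iterating the quasimorphism defect gives a pointwise bound $\tilde{u}_T(x) \ge \sum_{j} \tilde{u}_1(\phi(j,x)) - C'T$ for integer $T$, and for general $T$ one absorbs the fractional remainder into the constant. Since $\mu$ is $\phi$-invariant, $\int_Y \tilde{u}_1(\phi(j,x))\cdot\mu = \int_Y \tilde{u}_1\cdot\mu$ is a fixed finite number (finiteness coming from the third condition below applied at $S=1$), so the integral of $\tilde{u}_T$ is bounded below by a constant times $T$. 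The third condition, $\int_Y \big(\sup_{0\le S\le 1}|\tilde{u}_S|\big)\cdot\mu < \infty$, follows from compactness of $Y$: the bundle map $\Phi$ and the polar decomposition depend continuously on $(S,x)\in[0,1]\times Y$, hence so does the unitary part $U$ and the lift $\tilde{u}(S,x)$, so $\sup_{0\le S\le 1}|\tilde{u}_S(x)|$ is a continuous (in particular bounded) function on the compact manifold $Y$, and $\mu$ is a finite measure.

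The main obstacle is really just the bookkeeping in translating the cocycle identity into a multiplicative relation in $\widetilde{\on{Sp}}(2n)$ that is compatible with the chosen trivialization $\Xi$ — one must be careful that $\Xi$ is only defined along a single trajectory and that restricting a trajectory to a subinterval, or reparametrizing by a time shift, interacts correctly with the "based at $\widetilde{\on{Id}}$" normalization of the lifts in Notation \ref{notation:lift_cocycle}. Once that is set up cleanly, every inequality reduces to Lemma \ref{lem:determinant_quasimorphism} plus $\phi$-invariance of $\mu$ and compactness of $Y$; none of the steps requires an extended computation. I would present the argument by first recording the cocycle-to-group-relation translation as a short observation, then dispatching the three conditions in order.
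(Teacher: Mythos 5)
Your proposal is correct and follows essentially the same route as the paper: sub-additivity via Lemma \ref{lem:rotation_function_vs_rotation_quasimorphism}, the cocycle identity, and the bounded defect of $r$ from Lemma \ref{lem:determinant_quasimorphism}; the lower bound on $\int_Y \tilde{u}_T\cdot\mu$ by iterating over unit time steps using $\phi$-invariance of $\mu$; and the integrability condition from continuity of $\tilde{u}$ on $[0,1]\times Y$ and compactness of $Y$. The only cosmetic difference is that the paper controls the unit-step contributions by the minimum of $\tilde{u}$ on $[0,1]\times Y$ rather than by $\int_Y\tilde{u}_1\cdot\mu$, which changes nothing of substance.
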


\begin{proof} We verify the properties in Definition \ref{def:subadditive_process}. For the first property, fix a trajectory $\gamma:[0,S+T] \to Y$ of $\phi$ with $\gamma(0) = x$, and choose a unitary trivialization $\Xi:E|_\gamma \simeq \C^n$ inducing the trivialization $\tau:\Lambda E \simeq \C$. Define
\begin{equation} \label{eqn:lem:subadditive_process_1}
\Phi_\Xi:[0,S+T] \to \Sp(2n) \qquad \Phi_\Xi(t) := \Xi(t) \circ \Phi(t,x) \circ \Xi(0)^{-1}
\end{equation}
Let $\tilde{\Phi}_\Xi:[0,S+T] \to \twSp(2n)$ denote the lift to the universal cover. Then by Lemma \ref{lem:rotation_function_vs_rotation_quasimorphism} and the quasimorphism property of $r$, there is a constant $c > 0$ such that
\begin{equation} \label{eqn:lem:subadditive_process_2} \tilde{u}_{S+T}(x) = r(\tilde{\Phi}_\Xi(S+T)) \le r(\tilde{\Phi}_\Xi(S)) + r(\tilde{\Phi}_\Xi(S+T)\tilde{\Phi}_\Xi(S)^{-1}) + C\end{equation}
Clearly $\tilde{u}_S(x) = r(\tilde{\Phi}_\Xi(S))$ by Lemma \ref{lem:rotation_function_vs_rotation_quasimorphism}. Moreover, the cocycle property of $\Phi$ implies that
\[
\Phi_\Xi(S+t)\Phi_\Xi(S)^{-1} =  \Xi(t) \circ \Phi(S + t,x) \circ \Phi(S,x) \circ \Xi(s) = \Xi(S + t) \circ \Phi(t,\phi_S(t)) \circ \Xi(S)^{-1}
\]
Thus Lemma \ref{lem:rotation_function_vs_rotation_quasimorphism} also implies that $\phi^*_S\tilde{u}_T(x) = r(\tilde{\Phi}_\Xi(S+T)\tilde{\Phi}_\Xi(S)^{-1})$. The first property in Definition \ref{def:subadditive_process} then follows from (\ref{eqn:lem:subadditive_process_2}). To see the second property, note that if $T = m + S$ for $S \in [0,1]$, we have
\[
\int_Y \tilde{u}_T \cdot \mu \ge \sum_{k=0}^{m-1} \int_Y \phi^*_k\tilde{u}_1 \cdot \mu + \int_Y \phi^*_m\tilde{u}_S \cdot \mu - cm \ge (-c + M \cdot \mu(Y)) \cdot T
\]
Here $M$ is the minimum of $\tilde{u}_T$ for $T \in [0,1]$ and $0$. We can thus take the constant in the lemma to be $c - M \cdot \mu(Y)$. Finally, the third property follows immediately from the fact that $\tilde{u}:[0,1] \times Y \to \R$ is continuous and $Y$ is compact.
\end{proof}

In \cite{k1973}, Kingman proves several ergodic theorems, one of which can be stated as follows.

\begin{thm}[\cite{k1973}, Thm 4] \label{thm:kingman_ergodic} Let $f_T$ be a sub-additive process in the sense of Definition \ref{def:subadditive_process}. Then $\frac{f_T}{T}$ converges in $L^1(Y,\mu)$ and pointwise almost everywhere as $T \to \infty$.
\end{thm}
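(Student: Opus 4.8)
The plan is to prove this version of Kingman's subadditive ergodic theorem along the lines of the orbit--decomposition argument of Katznelson and Weiss, after two cosmetic reductions. First, the additive constant $C$ in Definition~\ref{def:subadditive_process} is harmless: $g_T := f_T + C$ satisfies the genuine subadditivity $g_{S+T} \le g_S + \phi_S^* g_T$ (as $C$ is $\phi$--invariant), still obeys the two integrability hypotheses, and has $g_T/T - f_T/T \to 0$ uniformly, so we may assume $C = 0$. Second, the continuous parameter reduces to integer time: subadditivity gives, for $T \ge 1$,
\[
f_{\lfloor T\rfloor + 1} - \phi_T^*\, f_{1 - \{T\}} \;\le\; f_T \;\le\; f_{\lfloor T\rfloor} + \phi_{\lfloor T\rfloor}^*\, f_{\{T\}},
\]
and the correction terms are dominated by $\phi^*$--translates of $h := \sup_{0 \le S \le 1}|f_S| \in L^1(Y,\mu)$; since $h \circ \phi_t / t \to 0$ a.e.\ and in $L^1$ as $t \to \infty$ (Borel--Cantelli, using $h \in L^1$), it suffices to treat the genuinely subadditive discrete process $(f_n)_{n \in \N}$ over the $\mu$--preserving map $\phi_1$.

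For the discrete statement, put $\gamma := \lim_n \frac1n\int_Y f_n\,\mu$, which exists in $[-\infty,\infty)$ by Fekete's lemma applied to the subadditive sequence $n \mapsto \int_Y f_n\,\mu$ and is finite by the second hypothesis of Definition~\ref{def:subadditive_process}. Set $\bar f := \limsup_n f_n/n$ and $\underline f := \liminf_n f_n/n$; from $f_{n+1} \le f_1 + f_n \circ \phi_1$ one gets $\bar f \le \bar f \circ \phi_1$ and $\underline f \le \underline f \circ \phi_1$ a.e., hence (after truncating to secure integrability and using that $\phi_1$ preserves $\mu$) $\bar f$ and $\underline f$ are $\phi_1$--invariant. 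Since $\underline f \le \bar f$ pointwise, the theorem reduces to proving
\[
\int_Y \bar f\,\mu \;\le\; \gamma \;\le\; \int_Y \underline f\,\mu,
\]
for then $\int_Y(\bar f - \underline f)\,\mu = 0$ with nonnegative integrand, so $\bar f = \underline f =: f$ a.e., with $\int_Y f\,\mu = \gamma$ and $f_n/n \to f$ a.e.

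The upper bound is the easier half: fixing $m \ge 1$ and iterating subadditivity gives $f_{mk} \le \sum_{j=0}^{k-1} f_m \circ \phi_1^{jm}$; dividing by $mk$, letting $k \to \infty$, and invoking Birkhoff's theorem for $\phi_1^m$ yields $\limsup_k f_{mk}/(mk) \le \frac1m\,\mathbb{E}(f_m \mid \mathcal{I}_m)$ a.e.\ (with $\mathcal{I}_m$ the $\phi_1^m$--invariant $\sigma$--algebra); writing a general index as $mk + r$, $0 \le r < m$, and absorbing the $O(1)$ remainder by $(\sup_{0 \le r < m} f_r^+)\circ\phi_1^{mk}/(mk) \to 0$ gives $\bar f \le \frac1m\,\mathbb{E}(f_m \mid \mathcal{I}_m)$, hence $\int_Y \bar f\,\mu \le \frac1m\int_Y f_m\,\mu$ for all $m$, and the infimum over $m$ is $\gamma$.

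The lower bound $\gamma \le \int_Y \underline f\,\mu$ is the main obstacle, and the only place the dynamics really enters, via a stopping--time orbit decomposition. Fix $\eps, M > 0$, let $\underline f_M := \underline f \vee (-M)$ (still $\phi_1$--invariant), and for a.e.\ $x$ let $\tau(x)$ be the least $n \ge 1$ with $f_n(x) \le (\underline f_M(x) + \eps)\,n$, finite since $\liminf_n f_n(x)/n \le \underline f_M(x)$; choose $N$ with $\mu\{\tau > N\}$ arbitrarily small. For large $L$, greedily partition $\{0,\dots,L-1\}$ into ``good'' blocks of length $\tau(\phi_1^j x) \le N$ (along which $f$ drops at average rate $\le \underline f_M(x) + \eps$, using the $\phi_1$--invariance of $\underline f_M$) and length--one ``bad'' blocks where $\phi_1^j x \in \{\tau > N\}$; subadditivity then gives
\[
f_L(x) \;\le\; (\underline f_M(x) + \eps)\,L \;+\; O\!\big((M + \eps)\,\mu\{\tau > N\}\,L\big) \;+\; \sum_{j=0}^{L-1}\mathbf{1}_{\{\tau > N\}}(\phi_1^j x)\, f_1^+(\phi_1^j x) \;+\; (\text{tail}),
\]
where the tail is bounded by $\sum_{k = L-N}^{L-1} f_1^+\circ\phi_1^k$. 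Integrating, dividing by $L$, and letting $L \to \infty$ (the left side tends to $\gamma$; the tail contributes $O(N/L)$; the bad--block sum contributes $\int_{\{\tau > N\}} f_1^+\,\mu$), then $\mu\{\tau > N\} \to 0$ (so all error terms vanish, as $f_1^+ \in L^1$), then $\eps \to 0$, then $M \to \infty$ by monotone convergence, yields $\gamma \le \int_Y \underline f\,\mu$. Finally, $L^1$ convergence is soft: $(f_n/n)^+ \le \frac1n\sum_{k < n} f_1^+ \circ \phi_1^k$, whose right side converges in $L^1(Y,\mu)$, so Pratt's generalized dominated convergence theorem together with the a.e.\ convergence and $\frac1n\int_Y f_n\,\mu \to \gamma = \int_Y f\,\mu$ gives $f_n/n \to f$ in $L^1(Y,\mu)$; undoing the two reductions transfers both modes of convergence back to $(f_T)_{T \ge 0}$.
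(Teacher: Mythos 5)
The paper itself offers no proof of this statement: it is imported verbatim from Kingman \cite{k1973}, so your self-contained Katznelson--Weiss-style argument is by necessity a different route, and its discrete-time core is the standard one and is fine as a sketch: Fekete's lemma for $\gamma$, the (truncation-assisted) $\phi_1$-invariance of $\limsup$ and $\liminf$, the Birkhoff upper bound $\bar f \le \tfrac1m\,\mathbb{E}(f_m\mid \mathcal{I}_m)$, the stopping-time orbit decomposition for $\gamma \le \int_Y \underline f\,\mu$, and the Pratt/Scheff\'e argument for $L^1$ convergence. The reduction eliminating the additive constant $C$ is also correct.

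The one step that does not hold as written is the passage from integer to real times in the pointwise statement. In your sandwich, the upper correction $\phi_{\lfloor T\rfloor}^*f_{\{T\}}$ sits at the integer time $\lfloor T\rfloor$, and Borel--Cantelli with $h\in L^1$ does give $h\circ\phi_1^n/n \to 0$ a.e.; but the lower correction $\phi_T^*f_{1-\{T\}}$ is evaluated at the non-integer time $T$, and the claim that $h\circ\phi_t/t \to 0$ a.e.\ as $t\to\infty$ through \emph{real} $t$ is not a Borel--Cantelli statement: it requires controlling $\sup_{0\le s\le 1}h\circ\phi_s$ along orbits, i.e.\ one needs $G:=\sup_{0\le s\le 1}h\circ\phi_s\in L^1(Y,\mu)$ (equivalently, integrability of the two-parameter supremum $\sup_{0\le s\le u\le 1}\lvert f_{u-s}\circ\phi_s\rvert$). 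This does not follow from Definition \ref{def:subadditive_process}: subadditivity only bounds $f_u\circ\phi_s$ from \emph{below} (via $f_{s+u}\le f_s+\phi_s^*f_u+C$), and the supremum of an $L^1$ function over unit-time orbit segments of a measure-preserving flow need not be integrable (it can even be a.e.\ infinite, e.g.\ for a linear flow on the two-torus and a density with an integrable singularity along a circle transverse to the flow). So your reduction does give $L^1$ convergence for all real $T$ and a.e.\ convergence along integer times, but the a.e.\ statement along real $T$ needs an extra ingredient: either the separability/integrability assumptions Kingman imposes on the two-parameter process in his continuous-parameter theorem, or, in the situation the paper actually uses, the joint continuity of $(T,x)\mapsto \tilde u_T(x)$ on the compact manifold $Y$, which makes $G$ continuous, hence integrable, and closes the gap. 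You should either strengthen the hypotheses accordingly or make this continuity assumption explicit when transferring the pointwise convergence back to continuous time.
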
{}

Applying Theorem \ref{thm:kingman_ergodic} to $\tilde{u}$ via Lemma \ref{lem:subadditive_process}, we immediately acquire the following result.

\begin{cor} The family of maps $\frac{\tilde{u_T}}{T}$ converges in $L^1(Y,\mu)$ and pointwise almost everywhere as $T \to \infty$. \end{cor}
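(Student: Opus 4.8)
The plan is to invoke Kingman's sub-additive ergodic theorem (Theorem \ref{thm:kingman_ergodic}) directly, with the only work being to check its hypotheses --- but that has already been done. Indeed, Lemma \ref{lem:subadditive_process} asserts precisely that the family $\tilde{u}_T$ is a sub-additive process in the sense of Definition \ref{def:subadditive_process}, so the corollary is an immediate application of Theorem \ref{thm:kingman_ergodic} to $f_T := \tilde{u}_T$.

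Concretely, the proof is one line: the family $\tilde{u}_T$ is a sub-additive process for $(Y,\phi)$ and $\mu$ by Lemma \ref{lem:subadditive_process}, and therefore $\tilde{u}_T/T$ converges in $L^1(Y,\mu)$ and pointwise almost everywhere as $T \to \infty$ by Theorem \ref{thm:kingman_ergodic}. There is no hidden obstacle at this stage --- all the substantive content (verifying the sub-additivity inequality $f_{S+T} \le f_S + \phi_S^* f_T + C$, the integral lower bound $\int_Y f_T \cdot \mu \ge -CT$, and the integrability of $\sup_{0 \le S \le 1}|f_S|$) was carried out in the proof of Lemma \ref{lem:subadditive_process}, using the quasimorphism property of $r$ via Lemma \ref{lem:rotation_function_vs_rotation_quasimorphism} and the compactness of $Y$.

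If I wanted to present it with marginally more detail, I would note the translation of conventions: Kingman's theorem is usually phrased for the discrete-time sub-additive process $n \mapsto f_n$, whereas here $T$ ranges over $[0,\infty)$; the continuous-time version is exactly the one stated in Definition \ref{def:subadditive_process} and Theorem \ref{thm:kingman_ergodic} (following \cite{k1973}), so no extra argument interpolating between integer and real times is needed. The $L^1$ convergence immediately yields that the pointwise a.e.\ limit lies in $L^1(Y,\mu)$, which is what is needed downstream to define $\on{ru}(\Phi,\tau) \in L^1(Y,\mu)$ and hence $\on{Ru}(\Phi,\tau,\mu)$ in the next subsection. Thus the ``main obstacle'' is really located in the preceding lemma rather than in the corollary itself.
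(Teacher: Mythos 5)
Your proposal is correct and is exactly the paper's argument: the corollary is stated as an immediate consequence of applying Kingman's ergodic theorem (Theorem \ref{thm:kingman_ergodic}) to the family $\tilde{u}_T$, whose sub-additive process property was verified in Lemma \ref{lem:subadditive_process}. No further detail is needed; the substantive work indeed lives in the preceding lemma, just as you say.
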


Critically, the limit of $\frac{\tilde{u}_T}{T}$ is independent of the auxilliary choices made. To demonstrate this, we need the following lemma.

\begin{lemma}[Automorphism] \label{lem:u_process_automorphism} There is a constant $C > 0$ with the property that, if $\Psi:Y \to \on{Aut}(E)$ is a symplectic bundle map homotopic to $\on{Id}$, then
\[|\tilde{u}_T(\Psi^*\Phi;J,\tau) - \tilde{u}_T(\Phi;J,\tau)| \le C\]
\end{lemma}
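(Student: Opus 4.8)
\emph{Proof proposal.} The plan is to reduce everything to the quasimorphism picture via Lemma~\ref{lem:rotation_function_vs_rotation_quasimorphism}. Fix $T\ge 0$, a trajectory $\gamma:[0,T]\to Y$ of $\phi$ with $\gamma(0)=x$, and a unitary trivialization $\Xi:E|_\gamma\simeq\C^n$ with $\Lambda\Xi=\tau|_\gamma$; set $\Theta(t):=\Xi(t)\circ\Psi_{\gamma(t)}\circ\Xi(t)^{-1}\in\Sp(2n)$. Writing the conjugate cocycle $(\Psi^*\Phi)(t,x)=\Psi_{\phi(t,x)}^{-1}\Phi(t,x)\Psi_x$ in the trivialization $\Xi$ realizes it as $(\Psi^*\Phi)_\Xi(t)=\Theta(t)^{-1}\,\Phi_\Xi(t)\,\Theta(0)$ (the opposite sign convention for $\Psi^*\Phi$ is handled identically, merely interchanging some inverses). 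Lifting this identity to $\widetilde{\Sp}(2n)$ and invoking uniqueness of path lifting gives $\widetilde{(\Psi^*\Phi)_\Xi}(T)=\widetilde{\Theta}(T)^{-1}\,\widetilde{\Phi}_\Xi(T)\,\widetilde c$, where $\widetilde c$ is a chosen lift of $\Theta(0)$ and $\widetilde{\Theta}$ is the lift of $t\mapsto\Theta(t)$ starting at $\widetilde c$. By Lemma~\ref{lem:rotation_function_vs_rotation_quasimorphism} the quantity to be bounded is $|r(\widetilde{(\Psi^*\Phi)_\Xi}(T))-r(\widetilde{\Phi}_\Xi(T))|$, and applying the quasimorphism inequality for $r$ (Lemma~\ref{lem:determinant_quasimorphism}) a bounded number of times, together with $r(\widetilde{\on{Id}})=0$, bounds this by $|r(\widetilde{\Theta}(T))|+|r(\widetilde c)|+C_0$ with $C_0$ depending only on $n$.

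It then remains to control $r(\widetilde{\Theta}(T))$ and choose $\widetilde c$ well. The mechanism I would use is that, since $r$ is by construction the lift of $\on{det}_\C\circ U$, the increment of $r$ along any lifted path in $\widetilde{\Sp}(2n)$ is exactly the winding number of the complex determinant of the unitary part of the projected path. As $\Xi(t)$ is unitary, the unitary part of $\Theta(t)$ is $\Xi(t)\circ U(\Psi_{\gamma(t)})\circ\Xi(t)^{-1}$, and the compatibility $\Lambda\Xi=\tau|_\gamma$ identifies its complex determinant with $\delta_\Psi(\gamma(t))$, where $\delta_\Psi:Y\to\U(1)$ sends $y$ to the complex determinant of the unitary part of $\Psi_y$ measured in the trivialization $\tau$. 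Because $\Psi$ is homotopic to $\on{Id}$ through bundle automorphisms, $\delta_\Psi$ is homotopic to the constant map $1$, hence lifts as $\delta_\Psi=\exp(2\pi i f)$ for a continuous $f:Y\to\R$. Choosing $\widetilde c$ to be the lift of $\Theta(0)$ with $r(\widetilde c)=f(\gamma(0))$ — possible since the lifts of $\Theta(0)$ realize exactly the values in $f(\gamma(0))+\Z$ — one gets $r(\widetilde{\Theta}(T))=r(\widetilde{\Theta}(0))+\big(f(\gamma(T))-f(\gamma(0))\big)=f(\gamma(T))$, so both $|r(\widetilde{\Theta}(T))|$ and $|r(\widetilde c)|$ are at most $\|f\|_\infty$, which is finite since $Y$ is compact. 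This yields $|\tilde{u}_T(\Psi^*\Phi;J,\tau)(x)-\tilde{u}_T(\Phi;J,\tau)(x)|\le 2\|f\|_\infty+C_0=:C$, uniformly in $T$ and $x$, which is the Lemma.

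The step I expect to be the crux is resisting the temptation to polar-decompose the \emph{products} of symplectic matrices appearing above: the unitary part of a product of symplectic matrices is not the product of the unitary parts, so directly expanding $\on{det}_\C$ of the unitary part of $\Theta(t)^{-1}\Phi_\Xi(t)\Theta(0)$ produces correction factors whose winding I do not see how to bound. The argument sidesteps this by keeping all products at the level of $\widetilde{\Sp}(2n)$, where $r$ is a genuine quasimorphism and products cost only bounded errors, and computing $\on{det}_\C$ of a unitary part only for the single factor $\Theta(t)$, which is a \emph{unitary} conjugate of $\Psi_{\gamma(t)}$ and hence has a transparent unitary part. One further point worth flagging: the constant $C$ produced this way depends on $\Psi$ through $\|f\|_\infty$; this dependence is genuine — rescaling $\Psi$ so that the unitary-part determinant oscillates more increases the displacement of the rotation function — and is what the phrasing of the Lemma must intend.
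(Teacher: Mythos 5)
Your proposal is correct, and in its second half it genuinely diverges from the paper's argument, in a way that matters. Both proofs share the same skeleton: trivialize unitarily along a trajectory, use Lemma \ref{lem:rotation_function_vs_rotation_quasimorphism} to convert $\tilde u_T$ into values of the determinant quasimorphism $r$, lift to $\twSp(2n)$, and pay bounded quasimorphism errors. The paper then writes the pulled-back cocycle in the trivialization as a \emph{same-time} conjugation $\Psi_\Xi(t)\Phi_\Xi(t)\Psi_\Xi(t)^{-1}$ and concludes via the cancellation $|r(\tilde g)+r(\tilde g^{-1})|\le c$, obtaining a constant ($3c$) independent of $\Psi$. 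Your formula $(\Psi^*\Phi)_\Xi(t)=\Theta(t)^{-1}\Phi_\Xi(t)\Theta(0)$, with endpoints at times $t$ and $0$, is the correct trivialized expression for the pullback $\Psi^{-1}_{\phi(t,x)}\Phi(t,x)\Psi_x$; with it the same-time cancellation is unavailable, and you instead bound the two boundary terms by producing a continuous $f:Y\to\R$ with $\delta_\Psi=\exp(2\pi i f)$ (using the homotopy $\Psi\simeq\on{Id}$ through symplectic automorphisms and continuity of the polar decomposition), choosing the lift $\tilde c$ with $r(\tilde c)=f(\gamma(0))$, and identifying $r(\widetilde{\Theta}(t))=f(\gamma(t))$ because both are lifts of the same $\on{U}(1)$-valued path agreeing at $t=0$; compactness of $Y$ then gives $2\|f\|_\infty+C_0$, uniform in $T$ and $x$. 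Each of these steps checks out: unitary conjugation preserves polar decompositions, so the unitary part of $\Theta(t)$ is $\Xi(t)U(\Psi_{\gamma(t)})\Xi(t)^{-1}$, and the lifts of $\Theta(0)$ realize exactly the coset $f(\gamma(0))+\Z$.

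Your closing remark about $\Psi$-dependence is also well-founded, and it is the substantive difference from the paper: a constant independent of $\Psi$ is impossible. For instance, on the trivial Hermitian line bundle over $Y=S^1$ with the rotation flow and the trivial cocycle $\Phi\equiv\on{Id}$, taking $\Psi_y=e^{2\pi i g(y)}$ for a real-valued $g$ (always homotopic to $\on{Id}$) gives $\tilde u_T(\Psi^*\Phi)(x)-\tilde u_T(\Phi)(x)=g(x)-g(\phi(T,x))$, which is as large as the oscillation of $g$ along orbits. So the paper's same-time conjugation identity (and with it the $\Psi$-independent constant) cannot be right as written; the correct statement is the one you prove, with $C=C(\Psi)$ uniform in $T$ and $x$, and this weaker form is precisely what the subsequent application (independence of the limit from $J$) needs, since there $\Psi$ is fixed and the bound is divided by $T\to\infty$.
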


\begin{proof} Let $\tilde{\Psi}:Y \to \widetilde{\on{Aut}}(E)$ denote any lift of $\Psi$ to the (fiberwise) universal cover bundle $\widetilde{\on{Aut}}(E)$ of $\on{Aut}(E)$.  Fix a trajectory $\gamma:[0,T] \to Y$ of $\phi$ with $\gamma(0) = x$, and choose a unitary trivialization $\Xi:E|_\gamma \simeq \C^n$ with $\Lambda \Xi = \tau$. Let $\Phi_\Xi$ and $\Psi^*\Phi_\Xi$ be defined as in Notation \ref{notation:lift_cocycle}, and let
\[
\Psi_\Xi:[0,T] \to \on{Sp}(2n) \quad\text{denote}\quad \Psi_\Xi(t) := \Xi(t) \circ \Psi(\gamma(t)) \circ \Xi(t)^{-1}
\]
Note that $\Phi_\Xi$, $\Psi^*\Phi_\Xi$ and $\Psi_\Xi$ are all related by the following identity.
\begin{equation} \label{eqn:lem:u_process_automorphism_1}
\Psi^*\Phi_\Xi(t) = \Psi_\Xi(t) \Phi_\Xi(t) \Psi_\Xi(t)^{-1}
\end{equation}
The trivialization induces a bundle isomorphism $\gamma^*\widetilde{\on{Aut}}(E) \simeq \twSp(2n)$, and thus the lift $\tilde{\Psi}$ of $\Psi$ induces a unique lift $\tilde{\Psi}_\Xi$ of $\Psi_\Xi$. The identity (\ref{eqn:lem:u_process_automorphism_1}) lifts to
\begin{equation} \label{eqn:lem:u_process_automorphism_2}
\widetilde{\Psi^*\Phi}_\Xi(t) = \tilde{\Psi}_\Xi(t) \tilde{\Phi}_\Xi(t) \tilde{\Psi}_\Xi(t)^{-1}
\end{equation}
Indeed, it suffices to check (\ref{eqn:lem:u_process_automorphism_2}) at $t = 0$, where both sides are $\tilde{\on{Id}} \in \twSp(2n)$.

\vspace{3pt}

To acquire the desired conclusion from (\ref{eqn:lem:u_process_automorphism_2}), we note that by Lemma \ref{lem:rotation_function_vs_rotation_quasimorphism}, we have
\begin{equation} \label{eqn:lem:u_process_automorphism_3}
\tilde{u}_T(\Psi^*\Phi;J,\tau) = r \circ \widetilde{\Psi^*\Phi}_\Xi(T) \qquad\text{and}\qquad \tilde{u}_T(\Phi;J,\tau) = r \circ \widetilde{\Phi}_\Xi(T)\end{equation}
On the other hand, let $c > 0$ be constant in Lemma \ref{lem:determinant_quasimorphism}. Then
\[|r \circ \tilde{\Psi}_\Xi + r \circ \tilde{\Psi}^{-1}_\Xi| \le |r(\tilde{\on{Id}})| + c = c\]
Therefore, at time $T$ we have the following inequality.
\[|r(\widetilde{\Psi^*\Phi}_\Xi(T)) - r(\tilde{\Phi}_\Xi(T))| \le |r(\widetilde{\Phi}_\Xi(T)) + r(\tilde{\Psi}_\Xi) + r(\tilde{\Psi}^{-1}_\Xi) - r(\tilde{\Phi}_\Xi(T))| + 2c \le 3c\]
The result now follows from (\ref{eqn:lem:u_process_automorphism_3}) by taking $C = 3c$. \end{proof}

\begin{prop} The limit of $\frac{\tilde{u}_T}{T}$ as $T \to \infty$ is independent of $J$ and the choice of representative of $\tau$. 
\end{prop}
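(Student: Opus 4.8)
The plan is to reduce the statement to the two independence questions—independence of $J$ and independence of the representative $\tau$ within its homotopy class—and handle each by producing an appropriate symplectic bundle automorphism and invoking the Automorphism Lemma (Lemma \ref{lem:u_process_automorphism}). Throughout I write $\tilde u_T(\Phi;J,\tau)$ to emphasize the dependence on the auxiliary data.

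First I would treat the complex structure. Fix two compatible complex structures $J_0, J_1$ on $E$. Since $\mathcal{J}(E,\omega)$ is contractible, there is a path $J_s$ connecting them, and (as recalled after the definition of compatible complex structure) the Hermitian bundles $(E,\omega,J_0)$ and $(E,\omega,J_1)$ are isomorphic via a symplectic bundle map $\Psi: E \to E$ which is homotopic to the identity through symplectic bundle maps (build $\Psi$ by, e.g., taking the unitary part of the $\omega$-compatible "transition" operator interpolating $J_0$ and $J_1$; one concrete choice is $\Psi = (\tfrac{1}{2}(J_0^{-1} + J_1))$ followed by polar projection, or simply parallel transport along $J_s$). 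Conjugating $\Phi$ by $\Psi$ carries the $J_0$-polar decomposition to the $J_1$-polar decomposition, so $\tilde u_T(\Psi^*\Phi; J_1, \tau') = \tilde u_T(\Phi; J_0, \tau)$ up to the correction controlled by Lemma \ref{lem:u_process_automorphism}, where $\tau' = \Psi \circ \tau$ is the induced trivialization of $\Lambda E$ with respect to $J_1$ (and, crucially, $\Psi$ being homotopic to $\mathrm{Id}$ means $\tau'$ lies in the \emph{same} homotopy class of trivializations, since $\mathcal{T}(\Lambda E)$ is defined $J$-independently). Hence $|\tilde u_T(\Phi;J_0,\tau) - \tilde u_T(\Phi;J_1,\tau')| \le C$ for a constant $C$ independent of $T$, and dividing by $T$ and letting $T \to \infty$ gives equality of the limits.

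Next, for the trivialization: if $\tau_0, \tau_1$ are two explicit unitary trivializations of $\Lambda E$ in the same homotopy class, then $\tau_1 \circ \tau_0^{-1}: \C \to \C$ is a map $Y \to U(1)$ that is null-homotopic, hence lifts to $Y \to \R$ with bounded image (compactness of $Y$). Comparing the two lifts $\tilde u$ built from $\tau_0$ versus $\tau_1$: by the Quasimorphism Lemma (Lemma \ref{lem:rotation_function_vs_rotation_quasimorphism}) each equals $r(\tilde\Phi_{\Xi_i})$ for a unitary trivialization $\Xi_i$ over the trajectory with $\Lambda \Xi_i = \tau_i$, and $\Xi_1 = G \cdot \Xi_0$ where $G: \gamma \to U(n)$ has $\det_\C G = \tau_1 \circ \tau_0^{-1}$ bounded in $\R$ after lifting; then $\tilde\Phi_{\Xi_1} = \tilde G(T)\, \tilde\Phi_{\Xi_0}\, \tilde G(0)^{-1}$ and the quasimorphism bound (Lemma \ref{lem:determinant_quasimorphism}) together with $r(\tilde G(t)) = \widetilde{\det_\C} G(t)$ bounded yields $|\tilde u_T(\Phi;J,\tau_0) - \tilde u_T(\Phi;J,\tau_1)| \le C'$ uniformly in $T$. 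Again divide by $T$ and pass to the limit. Combining the two steps—and using that the limit exists by the corollary to Kingman's theorem—shows the limit depends on neither $J$ nor the representative of $\tau$.

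The main obstacle, I expect, is the bookkeeping in the $J$-step: one must exhibit the interpolating automorphism $\Psi$ \emph{explicitly enough} to verify (i) it is symplectic, (ii) it is homotopic to $\mathrm{Id}$ through symplectic automorphisms (so Lemma \ref{lem:u_process_automorphism} applies), and (iii) conjugation by $\Psi$ intertwines the two polar decompositions and sends $\tau$ to a trivialization in the same homotopy class. Point (iii) is where one has to be slightly careful: the polar decomposition is defined relative to the inner product $\omega(J\cdot,\cdot)$, so changing $J$ genuinely changes $U$ and $P$, but conjugation by the Hermitian isometry $\Psi:(E,J_0) \to (E,J_1)$ carries $J_0$-unitary maps to $J_1$-unitary maps and $J_0$-self-adjoint to $J_1$-self-adjoint, so $\Psi U_0(t) \Psi^{-1}$ and $\Psi P_0(t) \Psi^{-1}$ are precisely the $J_1$-polar factors of $\Psi^*\Phi$; this is a clean but somewhat fiddly verification. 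Everything else is a direct application of the lemmas already in hand.
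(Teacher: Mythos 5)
Your proposal is correct and follows essentially the same two-step strategy as the paper: for $J$-independence, both arguments produce a symplectic bundle automorphism $\Psi$ homotopic to the identity intertwining the two complex structures, use covariance of the construction under unitary conjugation, and invoke Lemma \ref{lem:u_process_automorphism} to get a $T$-independent bound before dividing by $T$. The only (cosmetic) difference is in the trivialization step, where the paper writes down the exact identity $\tilde{u}_T(\Phi,J,\sigma) = \tilde{f}\circ\phi_T + \tilde{u}_T(\Phi,J,\tau) - \tilde{f}$ from uniqueness of lifts and bounds its $L^1$ norm, whereas you reach the same uniform bound through the quasimorphism property of $r$; both are valid.
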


\begin{proof} For convenience, we fix the following notation for this proof.
\[g(\Phi,J,\tau) := \lim_{T \to \infty} \frac{\tilde{u}_T(\Phi,J,\tau)}{T} \in L^1(Y,\mu;\R)\]

\vspace{3pt}

To show that the limit depends only on the isotopy class of $\tau$, let $\sigma:\Lambda E \simeq \C$ and $\tau:\Lambda E \simeq \C$ be isotopic unitary trivializations. Then we have
\[\sigma = f \tau \qquad\text{where}\qquad f:X \to U(1) \qquad \text{satisfies}\qquad [f] = 0 \in [Y,U(1)]\]
Since $f$ is null-homotopic, $f$ admits a lift $\tilde{f}:Y \to \R$ via the covering map $\exp(2\pi i \cdot):\R \to U(1)$. We can then relate $u_T(\Phi,J,\tau)$ and its lift to $u_T(\Phi,J,\sigma)$ by the following formulas.
\begin{equation} \label{prop:Ruelle_density:eq:1} u_T(\Phi,J,\sigma) = (f \circ \phi) u_T(\Phi,J,\tau) f^{-1} \qquad\text{and}\qquad \tilde{u}_T(\Phi,J,\sigma) = \tilde{f} \circ \phi + \tilde{u}_T(\Phi,J,\tau) - \tilde{f}\end{equation}
The first formula in (\ref{prop:Ruelle_density:eq:1}) follows directly from the definition, while the second follows from the uniqueness of the lift that is $0$ along $0 \times Y$. We then see that
\[\|g(\Phi,J,\tau) - g(\Phi,J,\sigma)\|_{L^1} = \lim_{T \to \infty} \frac{1}{T} \|\tilde{u}_T(\Phi,J,\sigma) - \tilde{u}_T(\Phi,J,\tau)\|_{L^1}\]
\[= \lim_{T \to \infty} \frac{1}{T} \| \tilde{f} \circ \phi_t - \tilde{f}\|_{L^1} \le  \lim_{T \to \infty} \frac{2 \|\tilde{f}\|_{L^1}}{T} = 0
\] 
Thus $g(\Phi,J,\tau) = g(\Phi,J,\sigma)$ in $L^1(Y,\mu;\R)$ and the limit depends only the class of $\tau$.

\vspace{3pt}

To prove independence of $J$, let $I$ and $J$ be two choices of compatible complex structure on $E$. There is a unitary bundle isomorphism
\[\Psi:Y \to \on{Aut}(E) \qquad\text{such that}\qquad \Psi^*\omega = \omega \qquad \Psi^*J = I \quad\text{and}\quad \Psi \sim \on{Id}_E\]
Here $\Psi$ is homotopic to the identity through symplectic bundle automorphisms. In particular, $\Psi^*\tau = \tau$ for any trivialization class $\tau$. Since the limit depends only on the trivialization homotopy class, we thus have
\[g(\Phi,J,\tau) = g(\Psi^*\Phi,\Psi^*J,\Psi^*\tau) = g(\Psi^*\Phi,I,\tau)\]
Using this identity and Lemma \ref{lem:u_process_automorphism}, we compute
\[\|g(\Phi,J,\tau) - g(\Phi,I,\tau)\|_{L^1} \le \lim_{T \to \infty} \frac{1}{T}\|\tilde{u}_T(\Psi^*\Phi,I,\tau) - \tilde{u}_T(\Phi,I,\tau)\|_{L^1} \le \lim_{T \to \infty} \frac{C \cdot \mu(Y)}{T} = 0\]
This proves that the limit is independent of $J$ and concludes the proof.\end{proof}

\subsection{Construction Of Invariant} \label{subsec:construction_of_ruelle} We are now ready to give a precise definition of the Ruelle density and invariant. Choose a complex structure $J$ and trivialization in class $\tau$, as in \S \ref{subsec:rotation_function}.

\begin{definition} The \emph{Ruelle density} $\on{ru}(\Phi,\tau)$ and the \emph{Ruelle invariant} $\on{Ru}(\Phi,\tau)$ are defined by
\[
\on{ru}(\Phi,\tau) := \lim_{T \to \infty} \frac{\tilde{u}_T}{T} \qquad\text{and}\qquad \on{Ru}(\Phi,\tau) := \int_Y \on{ru}(\Phi,\tau) \cdot \mu
\]
\end{definition}

\begin{prop} \label{prop:Ruelle_invariant} The Ruelle density and the Ruelle invariant satisfy the following formal properties.
\begin{itemize}
    \item[(a)] (Covariance) If $\Psi:(E,\Phi) \to (E',\Phi')$ is a symplectic cocycle isomorphism that maps $\tau$ to $\tau'$, then
    \[\on{ru}(\Phi,\tau) = \on{ru}(\Phi',\tau') \qquad\qquad \on{Ru}(\Phi,\tau,\mu) = \on{Ru}(\Phi',\tau',\mu)\]
    \item[(b)] (Direct Sum) If $\Phi = \Phi_1 \oplus \Phi_2$ is a direct sum of symplectic cocycles and $\tau = \tau_1 \otimes \tau_2$, then
    \[\on{ru}(\Phi_1 \oplus \Phi_2,\tau_1 \otimes \tau_2) = \on{ru}(\Phi_1,\tau_1) + \on{ru}(\Phi_2,\tau_2)\]
    \item[(c)] (Linearity) If $a\mu + b\nu$ is a positive combination of $\phi$-invariant Borel measures $\mu$ and $\nu$, then
    \[
    \on{Ru}(\Phi,\tau,a\mu + b\nu) = a\on{Ru}(\Phi,\tau,\mu) + b\on{Ru}(\Phi,\tau,\nu)\]
    \item[(d)] (Trivial Bundle) If $\Phi$ is a symplectic cocycle on $\C^n$ with the tautological trivialization $\tau_{\on{std}}$, then
    \[\on{ru}(\Phi,\tau_{\on{std}}) = \lim_{T \to \infty} \frac{q \circ \widetilde{\Phi}_T}{T} \qquad\qquad \on{Ru}(\Phi,\tau_{\on{std}}) = \lim_{T \to \infty} \frac{1}{T} \int_Y q \circ \widetilde{\Phi}_T \cdot \mu\]
    Here $q$ is any rotation quasimorphism (see \S \ref{subsec:rotation_quasimorphisms}) and $\widetilde{\Phi}:\R\times Y \to \widetilde{\on{Sp}}(2n)$ is the lift of $\Phi$ (regarded as a map $\R \times Y \to \on{Sp}(2n)$) to the universal cover $\widetilde{\on{Sp}}(2n)$.
\end{itemize}
\end{prop}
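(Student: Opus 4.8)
\emph{Proof proposal.} Almost all of the work has already been carried out in \S\ref{subsec:rotation_function}: the rotation functions $\tilde u_T$ form a sub-additive process, the limit $\on{ru}(\Phi,\tau)=\lim_{T\to\infty}\tilde u_T/T$ exists pointwise almost everywhere and in $L^1$, and it is independent of the auxiliary choice of compatible complex structure $J$ and of the representative of the trivialization class $\tau$. My plan is to deduce each of (a)--(d) from this, in each case reducing the statement to an elementary identity at the level of the polar decomposition or the determinant line bundle, together with the quasimorphism comparison of Lemma~\ref{lem:rotation_function_vs_rotation_quasimorphism}.

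For (a), I would fix a compatible complex structure $J'$ on $E'$ and set $J:=\Psi^*J'$, so that $\Psi$ becomes a unitary bundle isomorphism. The cocycle-isomorphism identity $\Phi'\circ\rho^*\Psi=\phi^*\Psi\circ\Phi$, together with the uniqueness of the polar decomposition, shows that if $\Phi=UP$ then $\Phi'=U'P'$ with $U'=(\phi^*\Psi)\circ U\circ(\rho^*\Psi)^{-1}$. Passing to determinant lines and using that $\Psi$ carries $\tau$ to $\tau'$ (i.e. $\tau'\circ\Lambda\Psi=\tau$) gives $u'=u$ as maps $\R\times Y\to U(1)$, hence $\tilde u_T(\Phi;J,\tau)=\tilde u_T(\Phi';J',\tau')$ for all $T$. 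Dividing by $T$, taking the limit, and invoking the established $J$-independence yields $\on{ru}(\Phi,\tau)=\on{ru}(\Phi',\tau')$; integrating against the common invariant measure $\mu$ gives the statement for $\on{Ru}$. For (b), I would take $J=J_1\oplus J_2$, so the polar decomposition is block-diagonal and $U=U_1\oplus U_2$; under the canonical identification $\Lambda(E_1\oplus E_2)\cong\Lambda E_1\otimes\Lambda E_2$ one has $\Lambda U=\Lambda U_1\otimes\Lambda U_2$, so with $\tau=\tau_1\otimes\tau_2$ the map $u$ is the product $u_1\cdot u_2$. By uniqueness of the lift vanishing on $0\times Y$, $\tilde u_T=(\tilde u_1)_T+(\tilde u_2)_T$; divide by $T$ and pass to the limit.

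For (c), I would regard the Ruelle density as the honest Borel function $\limsup_{T\to\infty}\tilde u_T/T$ on $Y$, which is defined with no reference to a measure; by Kingman's theorem (Theorem~\ref{thm:kingman_ergodic}) applied to $(Y,\phi,\mu)$, to $(Y,\phi,\nu)$, and to $(Y,\phi,a\mu+b\nu)$, this single function represents the $L^1$-limit with respect to each of the three invariant measures. Linearity of $\on{Ru}$ in the measure is then just linearity of the integral. For (d), with the tautological trivialization the constant trivialization $\Xi$ of $\C^n$ over a trajectory $\gamma$ satisfies $\Lambda\Xi=\tau_{\on{std}}$ and $\Phi_\Xi=\Phi$, so Lemma~\ref{lem:rotation_function_vs_rotation_quasimorphism} gives $\tilde u_T(x)=r(\tilde\Phi_T(x))$, where $r$ is the determinant quasimorphism; this is the claim for $q=r$. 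For an arbitrary rotation quasimorphism $q$, equivalence to $r$ means $\sup_{\widetilde{\Sp}(2n)}|q-r|<\infty$, hence $(q\circ\tilde\Phi_T-r\circ\tilde\Phi_T)/T\to 0$ uniformly; therefore $\on{ru}(\Phi,\tau_{\on{std}})=\lim_{T\to\infty}(q\circ\tilde\Phi_T)/T$ pointwise and in $L^1$, and the $L^1$ convergence on the finite measure space $(Y,\mu)$ lets me pull the limit out of the integral to obtain the formula for $\on{Ru}$.

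I expect no substantial obstacle here; the one point that needs genuine care is the measure-theoretic bookkeeping in (c) and the final interchange of limit and integral in (d)---one must make sure that ``the'' Ruelle density is a single, measure-independent Borel function, so that the a.e.\ convergence statements furnished by Kingman's theorem for different invariant measures refer to the same object. Everything else is a formal manipulation of polar decompositions, determinant line bundles, and the quasimorphism estimates from \S\ref{subsec:rotation_quasimorphisms}.
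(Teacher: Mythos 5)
Your proposal is correct and follows essentially the same route as the paper's proof: reduce (a) to the unitary case via uniqueness of the polar decomposition, read (b) off the block structure $U = U_1 \oplus U_2$ and $\Lambda U = \Lambda U_1 \otimes \Lambda U_2$ together with uniqueness of lifts, get (c) from linearity of integration against the (pointwise-defined) density, and get (d) from Lemma \ref{lem:rotation_function_vs_rotation_quasimorphism} with the identity trivialization plus boundedness of $q - r$. You merely spell out some steps the paper leaves implicit (the conjugation identity for $U'$ in (a), the measure-independence of the density in (c), and the passage from $r$ to a general rotation quasimorphism in (d)), and these elaborations are all sound.
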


\begin{proof} These properties are more or less immediate from the properties of $\tilde{u}_T$. We discuss each proof separately below.

\vspace{3pt}

\noindent {\bf Covariance.} This is immediate since we can assume (by choice of $J'$ and $\tau'$) that $\Psi$ is unitary. 

\vspace{3pt}

\noindent {\bf Direct Sum.} Choose explicit complex structures $J_i$ and unitary trivializations $\tau_i:\Lambda E_i \simeq \C$. We adopt the notatation
\[E = E_1 \oplus E_2 \qquad \Phi = \Phi_1 \oplus \Phi_2 \qquad J = J_1 \oplus J_2 \qquad \tau = \tau_1 \otimes \tau_2\]
The unitary part $U$ of the cocycle $\Phi$ with respect to $J$ and the determinant $\Lambda U$ can be written in terms of the unitary parts $U_i$ of $\Phi_i$ as
\[U = U_1 \oplus U_2 \qquad \Lambda U = \Lambda U_1 \otimes \Lambda U_2\]
Therefore, the induced maps $Y \to U(1)$ satisfy the following identities.
\[u(\Phi,J,\tau) = u(\Phi_1,J_1,\tau_1)u(\Phi_2,J_2,\tau_2) \qquad\text{and}\qquad \tilde{u}(\Phi,J,\tau) = \tilde{u}(\Phi_1,J_1,\tau_1) + \tilde{u}(\Phi_2,J_2,\tau_2)\]
The additivity of the Ruelle density and invariant now follows directly from the definition.

\vspace{3pt}

 \noindent {\bf Linearity.} This follows from the fact that $\on{ru}(\Phi,\tau) \in L^1(Y,\mu) \cap L^1(Y,\nu)$ and the linearity of integration against measures. 

\vspace{3pt}

\noindent {\bf Trivial Bundle.} Clearly, it suffices to prove the result for the rotation quasimorphism $r$. Let $\tilde{\Phi}:\R \times Y \to \twSp(2n))$ denote the lift of $\Phi:\R \times Y \to \Sp(2n)$ to the universal cover. Let $\Xi:\C^n \to \C^n$ be the identity trivialization on $\C^n$, so that
\[\Phi_\Xi = \Phi \qquad\text{and}\qquad \Lambda\Xi = \tau_{\on{std}}\]
Then by Lemma \ref{lem:rotation_function_vs_rotation_quasimorphism}, we have
\[\tilde{u}_T(x) = r \circ \tilde{\Phi}(T,x)\]
The result now follows immediately from the definition of $\on{ru}$ and $\on{Ru}$. 
\end{proof}

As an easy consequence of Proposition \ref{prop:Ruelle_invariant}(d) and Proposotion \ref{prop:trace_estimate_r}, we acquire a key trace bound on the Ruelle invariant.  

\begin{lemma}[Trace Bound] \label{lem:trace_bound_Ru} Let $\Phi$ be a symplectic cocycle on $\C^n$ generated by a map $A:Y \to \mathfrak{sp}(2n)$. That is
\[\frac{d}{dt}\big(\Phi(t,x)\big) = A(\phi(t,x)) \circ \Phi(t,x)\]
Assume that $-\Omega A$ is positive semi-definite, where $\Omega$ is the matrix representing the standard symplectic form. Then
\[\on{Ru}(\Phi,\tau_{\on{std}}) \le \frac{8n^2}{\pi} \cdot \int_Y \on{tr}(-\Omega A) \cdot \mu\]
\end{lemma}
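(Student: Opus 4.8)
The plan is to combine the trivial-bundle formula for the Ruelle invariant (Proposition \ref{prop:Ruelle_invariant}(d)) with the trace estimate for the determinant quasimorphism (Proposition \ref{prop:trace_estimate_r}), and then average over $Y$ using the $\phi$-invariance of $\mu$. Since the bundle is $\C^n$ with the tautological trivialization $\tau_{\on{std}}$ and the determinant quasimorphism $r$ (Example \ref{ex:determinant_quasimorphism}) is a rotation quasimorphism, Proposition \ref{prop:Ruelle_invariant}(d) applies with $q = r$ and gives
\[\on{Ru}(\Phi,\tau_{\on{std}}) = \lim_{T \to \infty} \frac{1}{T}\int_Y r\circ\widetilde{\Phi}_T \cdot \mu,\]
so it suffices to bound $\frac{1}{T}\int_Y r\circ\widetilde{\Phi}_T\cdot\mu$ uniformly in $T$.

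The key step is the pointwise estimate. Fix $x \in Y$ and consider the path $t \mapsto \Phi(t,x) \in \Sp(2n)$, which starts at $\on{Id}$ and, by hypothesis, satisfies $\tfrac{d}{dt}\Phi(t,x) = A(\phi(t,x))\,\Phi(t,x)$. Applying Proposition \ref{prop:trace_estimate_r} to this path, the associated matrix is
\[S_t = -\Omega\,\tfrac{d}{dt}\Phi(t,x)\cdot\Phi(t,x)^{-1} = -\Omega\, A(\phi(t,x)),\]
which is positive semi-definite for every $t$, since $-\Omega A$ is positive semi-definite at every point of $Y$ and $\phi(t,x) \in Y$. Hence Proposition \ref{prop:trace_estimate_r} yields, for all $T > 0$,
\[r\big(\widetilde{\Phi}_T(x)\big) \le \frac{8n^2}{\pi}\int_0^T \on{tr}\big(-\Omega A(\phi(t,x))\big)\, dt.\]

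It remains to integrate over $Y$ and take the limit. Integrating the last inequality against $\mu$, dividing by $T$, and applying Fubini's theorem (valid because $\on{tr}(-\Omega A)$ is continuous, hence bounded on the compact $Y$, and $\mu$ is finite) together with the $\phi$-invariance of $\mu$ gives
\[\frac{1}{T}\int_Y\int_0^T \on{tr}\big(-\Omega A(\phi(t,x))\big)\,dt\,\mu(x) = \frac{1}{T}\int_0^T\Big(\int_Y \on{tr}(-\Omega A)\cdot\mu\Big)\,dt = \int_Y \on{tr}(-\Omega A)\cdot\mu,\]
so that $\frac{1}{T}\int_Y r\circ\widetilde{\Phi}_T\cdot\mu \le \frac{8n^2}{\pi}\int_Y \on{tr}(-\Omega A)\cdot\mu$ for every $T$. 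Letting $T \to \infty$ and using Proposition \ref{prop:Ruelle_invariant}(d) identifies the left-hand side with $\on{Ru}(\Phi,\tau_{\on{std}})$, which proves the bound.

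Every step is routine; the only points requiring a little care are the bookkeeping that identifies the generator $S_t$ in Proposition \ref{prop:trace_estimate_r} with $-\Omega A(\phi(t,x))$ — so that the positive-semidefiniteness hypothesis is genuinely verified along the whole orbit rather than just at $x$ — and the justification of Fubini and of exchanging the time-average with the spatial integral via invariance of $\mu$. I do not anticipate a serious obstacle.
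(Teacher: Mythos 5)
Your proposal is correct and follows essentially the same route as the paper: apply Proposition \ref{prop:Ruelle_invariant}(d) with $q = r$, use Proposition \ref{prop:trace_estimate_r} pointwise along each trajectory with $S_t = -\Omega A(\phi(t,x))$, and then exchange the order of integration using the $\phi$-invariance of $\mu$. The extra care you take in verifying positive semi-definiteness along the whole orbit and in justifying Fubini is welcome but matches the paper's (more terse) argument.
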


\begin{proof} By Proposition \ref{prop:Ruelle_invariant}(d) and Proposition \ref{prop:trace_estimate_r}, we know that
\begin{equation} \label{eqn:trace_bound_Ru_1}
\on{Ru}(\Phi,\tau,\mu) = \lim_{T \to \infty} \frac{1}{T} \int_Y r \circ \widetilde{\Phi}_T \cdot \mu \le \frac{8n^2}{\pi}\lim_{T \to \infty} \frac{1}{T} \int_Y \Big( \int_0^T \on{tr}(-\Omega A(\phi(t,x))) \cdot dt \Big) \mu
\end{equation}
Rearranging the order of integration and using the fact that $\phi$ is measure preserving, we see that
\[
\int_Y \Big(\int_0^T \on{tr}(-\Omega A(\phi(t,x))) \cdot dt \Big) \mu = \int_0^T \Big(\int_Y \phi^*_t\on{tr}(-\Omega A) \cdot \mu \Big) dt = \int_0^T \Big(\int_Y \on{tr}(-\Omega A) \cdot \mu\big) \cdot dt
\]
Therefore, the right hand side of (\ref{eqn:trace_bound_Ru_1}) simplifies to
\[
\frac{8n^2}{\pi}\lim_{T \to \infty} \frac{1}{T} \int_Y \Big( \int_0^T \on{tr}(-\Omega A(\phi(t,x))) \cdot dt \Big) \mu = \frac{8n^2}{\pi}\lim_{T \to \infty} \frac{1}{T} \int_0^T \Big(\int_Y \on{tr}(-\Omega A) \cdot \mu\big) \cdot dt = \frac{8n^2}{\pi}\int_Y \on{tr}(-\Omega A) \cdot \mu \qedhere
\]
\end{proof}

\section{Ruelle Bound For Convex Domains} \label{sec:Ruelle_invariant_of_convex_domains} In this section, we prove that the Ruelle invariant of a convex, star-shaped domain $X$ obeys the systolic inequality in Theorem \ref{thm:main_inequality}.  

\vspace{3pt}

The majority of our proof involves the analysis of a certain Laplacian integral on a Riemannian manifold admitting a nice, free $\R$-action. We carry out this analysis in \S \ref{subsec:linear_tensor_fields} and \S \ref{subsec:laplacian_functional}. We then discuss standard symplectic ellipsoids in \S \ref{subsec:ellipsoids}, before proceeding to the main proof in \S \ref{subsec:proof_of_main_inequality}.

\subsection{Linear Tensor fields} \label{subsec:linear_tensor_fields} We start by discussing linear tensor fields, i.e. tensor fields on a (Riemannian) manifold that are conformal with respect to a vector field. Let $M$ be a manifold.

\begin{definition} A vector field $V$ is \emph{cylindrical} if there is a codimension $1$ submanifold $Y \subset M$ such that $V$ is transverse to $Y$ and the flow $\Phi$ by $V$ defines a diffeomorphism
\[
\Phi:\R \times Y \simeq M
\] 
A \emph{cylindrical domain} $X \subset M$ is a codimension $0$ submanifold with boundary such that flow by $V$ defines a diffeomorphism
\[
X \simeq (-\infty,0] \times \partial X
\]\end{definition}

\begin{definition} A tensor field $\Psi$ on $M$ is \emph{$V$-linear} of slope $a \in \R$ if
\[\mathcal{L}_V\Psi = a \cdot \Psi\] 
\end{definition}

We will need some elementary properties of linear tensor fields, which we record in the following lemma. The proofs are simple and left to the reader.

\begin{lemma} \label{lem:properties_of_linear_tensors} $V$-linear tensor fields on $M$ have the following properties.
\begin{itemize}
    \item[(a)] (Linearity) If $\Phi$ and $\Psi$ are $V$-linear tensor fields of slope $a$ and $c$ is a constant, then
    \[\Phi + \Psi \qquad\text{and}\qquad c \cdot \Psi \qquad\text{are $V$-linear of slope $a$}\]
    \item[(b)] (Tensor Product) If $\Phi$ and $\Psi$ are $V$-linear tensor fields of slope $a$ and $b$, respectively, then
    \[\Phi \otimes \Psi \qquad\text{is $V$-linear of slope $a+b$}\]
    \item[(c)] (Integral) If $\mu$ is a $V$-linear volume form of slope $a > 0$ and $X$ is a cylindrical domain, then
    \[\int_{\partial X} \iota_V\mu = a \cdot \int_X \mu\]
    \item[(d)] (Derivative) If $\theta$ is a $V$-linear differential form of slope $a$, then
    \[d\theta \qquad\text{is $V$-linear of slope $a$}\]
\end{itemize}
\end{lemma}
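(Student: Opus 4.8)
The plan is to deduce all four statements directly from the defining identity $\mathcal{L}_V\Psi = a\cdot\Psi$, using only the standard formal calculus of the Lie derivative; none of the parts requires genuinely new input, which is why the paper leaves them to the reader.

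For (a) I would invoke the $\R$-linearity of $\mathcal{L}_V$ as an operator on tensor fields: $\mathcal{L}_V(\Phi+\Psi) = \mathcal{L}_V\Phi + \mathcal{L}_V\Psi = a\Phi + a\Psi = a(\Phi+\Psi)$ and $\mathcal{L}_V(c\Psi) = c\,\mathcal{L}_V\Psi = a(c\Psi)$. For (b) the Leibniz rule $\mathcal{L}_V(\Phi\otimes\Psi) = (\mathcal{L}_V\Phi)\otimes\Psi + \Phi\otimes(\mathcal{L}_V\Psi)$ gives $(\mathcal{L}_V\Phi)\otimes\Psi + \Phi\otimes(\mathcal{L}_V\Psi) = a\,\Phi\otimes\Psi + b\,\Phi\otimes\Psi = (a+b)\,\Phi\otimes\Psi$. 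For (d) I would use that $\mathcal{L}_V$ commutes with $d$ (immediate from Cartan's formula $\mathcal{L}_V = d\iota_V + \iota_V d$, which gives $[\mathcal{L}_V,d]=0$), so $\mathcal{L}_V(d\theta) = d(\mathcal{L}_V\theta) = d(a\theta) = a\,d\theta$.

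The only part with any content is (c). Cartan's formula together with $d\mu=0$ (as $\mu$ is top-degree) gives $d\iota_V\mu = \mathcal{L}_V\mu = a\mu$, so formally $\int_{\partial X}\iota_V\mu = \int_X d\iota_V\mu = a\int_X\mu$ by Stokes. Since a cylindrical domain $X\simeq(-\infty,0]\times\partial X$ is noncompact, I would make this rigorous by truncation: set $X_s := \Phi^V_{[-s,0]}(\partial X)$, a compact manifold with $\partial X_s = \partial X - \Phi^V_{-s}(\partial X)$ (with boundary orientations, since $V$ points outward along $\partial X$). Integrating $\frac{d}{dt}(\Phi^V_t)^*\mu = (\Phi^V_t)^*(\mathcal{L}_V\mu) = a\,(\Phi^V_t)^*\mu$ yields $(\Phi^V_t)^*\mu = e^{at}\mu$, and likewise $(\Phi^V_{-s})^*(\iota_V\mu) = e^{-as}\,\iota_V\mu$ because $V$ is flow-invariant; in particular $\int_X\mu$ converges (the pull-backs decay like $e^{-as}$ toward the noncompact end, using $a>0$). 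Applying Stokes on $X_s$ then gives $\int_{\partial X}\iota_V\mu - e^{-as}\int_{\partial X}\iota_V\mu = a\int_{X_s}\mu$, and letting $s\to\infty$ produces the claimed identity.

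The main obstacle, such as it is, is exactly this convergence bookkeeping at the noncompact end of the cylinder (finiteness of $\int_X\mu$ and vanishing of the inner boundary term); parts (a), (b), and (d) are one-line manipulations of Lie-derivative identities.
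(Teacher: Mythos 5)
Your proof is correct, and since the paper explicitly leaves these verifications to the reader there is no competing argument to compare against: parts (a), (b), (d) are the standard Lie-derivative identities, and your treatment of (c) via Cartan's formula, Stokes on the truncated cylinder $X_s$, and the decay $(\Phi^V_{-s})^*(\iota_V\mu)=e^{-as}\iota_V\mu$ (which needs $a>0$) is exactly the intended argument. The only implicit assumption worth flagging is compactness of $\partial X$ (true in all applications in the paper), which is what makes the boundary integrals finite in the first place.
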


We will be primarily interested in $V$-linear tensors in the presence of a metric. Fix the data of
\[\text{a $V$-linear metric $g$ of slope $1$}\]
To start, we note that $V$ is compatible with the covariant derivative and metric volume.

\begin{lemma}[Covariant Derivative] \label{lem:covariant_derivative_linear} The covariant derivative $\nabla$ of the metric $g$ satisfies 
\[
\mathcal{L}_V(\nabla\Psi) = \nabla(\mathcal{L}_V\Psi) \qquad\text{for any tensor field} \qquad\Psi
\]
\[
\langle\nabla_UV,W\rangle + \langle U,\nabla_WV\rangle = \frac{1}{2}\langle U,W\rangle \qquad\text{for any pair of vector-fields}\qquad U,W
\]
Thus $\nabla \Psi$ is $V$-linear of slope $a$ if $\Psi$ is $V$-linear of slope $a$. \end{lemma}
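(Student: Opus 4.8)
The goal is to prove Lemma~\ref{lem:covariant_derivative_linear}: given a $V$-linear metric $g$ of slope $1$, the covariant derivative $\nabla$ commutes with $\mathcal{L}_V$ on tensor fields, and $V$ satisfies the displayed symmetrized-derivative identity; consequently $\nabla\Psi$ is $V$-linear of slope $a$ whenever $\Psi$ is.

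\medskip

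\textbf{Plan.} The cleanest route is via the Koszul formula. First I would establish the second identity, which is really the infinitesimal version of the hypothesis $\mathcal{L}_V g = g$. Indeed, for any vector fields $U, W$ one has
\[
(\mathcal{L}_V g)(U,W) = V\langle U,W\rangle - \langle [V,U],W\rangle - \langle U,[V,W]\rangle.
\]
On the other hand, writing $[V,U] = \nabla_V U - \nabla_U V$ (torsion-freeness) and using metric compatibility $V\langle U,W\rangle = \langle \nabla_V U, W\rangle + \langle U, \nabla_V W\rangle$, the right-hand side collapses to $\langle \nabla_U V, W\rangle + \langle U, \nabla_W V\rangle$. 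Since $\mathcal{L}_V g = 1\cdot g$ by the slope-$1$ hypothesis, this gives exactly the claimed identity $\langle \nabla_U V, W\rangle + \langle U, \nabla_W V\rangle = \tfrac12 \langle U, W\rangle$.

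\medskip

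\textbf{Commuting $\mathcal{L}_V$ with $\nabla$.} Next I would show $\mathcal{L}_V(\nabla\Psi) = \nabla(\mathcal{L}_V\Psi)$. Here the conceptual point is that $\mathcal{L}_V$ commutes with the Levi-Civita connection of $g$ precisely because the flow of $V$ acts conformally (scaling $g$ by $e^t$), and a conformal rescaling by a \emph{constant} factor does not change the Levi-Civita connection. More concretely: let $\Phi_t$ be the flow of $V$, so $\Phi_t^* g = e^{t} g$. The Levi-Civita connection is invariant under conformal rescaling by a constant, so $\Phi_t^*\nabla = \nabla$ (the connection of $e^t g$ equals the connection of $g$). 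Hence for any tensor $\Psi$,
\[
\Phi_t^*(\nabla \Psi) = (\Phi_t^*\nabla)(\Phi_t^*\Psi) = \nabla(\Phi_t^*\Psi),
\]
and differentiating at $t = 0$ yields $\mathcal{L}_V(\nabla\Psi) = \nabla(\mathcal{L}_V\Psi)$. Alternatively, if one prefers an infinitesimal computation: the difference $\mathcal{L}_V \nabla - \nabla \mathcal{L}_V$, applied to tensors, is tensorial in $V$ (it is a well-known $(1,2)$-tensor built from $\nabla\nabla V$ and curvature — explicitly $(\mathcal{L}_V\nabla)(X,Y) = \nabla_X\nabla_Y V - \nabla_{\nabla_X Y} V + R(V,X)Y$), and one checks using the first identity plus $\mathcal{L}_V g = g$ that this tensor vanishes; the symmetrized identity $\langle\nabla_X V,Y\rangle + \langle X,\nabla_Y V\rangle = \tfrac12\langle X,Y\rangle$ is what forces the symmetric part of $\nabla V$ to be $\tfrac14\mathrm{Id}$, a parallel tensor, killing the relevant terms.

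\medskip

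\textbf{Conclusion.} The last assertion is then immediate: if $\mathcal{L}_V\Psi = a\Psi$, then $\mathcal{L}_V(\nabla\Psi) = \nabla(\mathcal{L}_V\Psi) = \nabla(a\Psi) = a\,\nabla\Psi$, so $\nabla\Psi$ is $V$-linear of slope $a$.

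\medskip

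\textbf{Main obstacle.} The genuinely substantive step is the commutation $\mathcal{L}_V\nabla = \nabla\mathcal{L}_V$; everything else is bookkeeping with Koszul/torsion-freeness. The slickest justification is the conformal-invariance-of-Levi-Civita argument above, but it requires being careful that "conformal rescaling by a constant leaves the connection unchanged" — this is true because the Christoffel symbols of $cg$ (for constant $c>0$) equal those of $g$ — whereas for a non-constant conformal factor there would be correction terms. One should double-check there is no subtlety about $V$ being incomplete (the flow $\Phi_t$ may only be locally defined), but since the identity is local and we only differentiate at $t=0$, this causes no problem.
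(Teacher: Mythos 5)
Your proof follows essentially the same route as the paper's: the commutation $\mathcal{L}_V\nabla=\nabla\mathcal{L}_V$ via invariance of the Levi-Civita connection under constant conformal rescaling of the flow $\Phi_t^*g=e^tg$, and the second identity via the standard Killing-type computation of $\mathcal{L}_Vg$ (the paper instead evaluates on $V$-invariant vector fields and uses pointwise density, but this is the same computation in different packaging). The last assertion follows as you say.

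One arithmetic point deserves correction. Your Koszul computation yields
\[
\langle\nabla_UV,W\rangle+\langle U,\nabla_WV\rangle=(\mathcal{L}_Vg)(U,W)=\langle U,W\rangle,
\]
i.e.\ the right-hand side carries a factor $1$, not $\tfrac12$, so it does not "give exactly" the identity as displayed in the lemma; the $\tfrac12$ in the statement appears to be a typo, since the paper's own proof also derives the factor-$1$ version, and the downstream uses (the self-adjoint part of $\nabla V$ equal to $\tfrac12\,\mathrm{Id}$ in Lemma \ref{lem:V_linear_pos_def}, and $\nabla Z=\tfrac12\,\mathrm{Id}$ for the standard Liouville field) are consistent only with factor $1$. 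Correspondingly, your remark that the symmetric part of $\nabla V$ is $\tfrac14\,\mathrm{Id}$ should read $\tfrac12\,\mathrm{Id}$. This does not affect the validity of your argument for the commutation or the final conclusion.
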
 

\begin{proof} For the first formula, let $\Phi:\R \times M \to M$ be the flow of $V$. Then
\[\Phi_t^*g = e^{t}g\]
Metrics differing by a constant conformal factor have identical covariant derivatives. Therefore
\[\mathcal{L}_V(\nabla \Psi) = \frac{d}{dt}(\Phi^*_t(\nabla \Psi))|_{t=0} = \frac{d}{dt}(\nabla(\Phi^*_t\Psi))|_{t=0} = \nabla \frac{d}{dt}(\Phi^*_t\Psi)|_{t=0} = \nabla(\mathcal{L}_V\Psi)\]

For the second formula, let $U$ and $W$ be arbitrary $V$-linear vector fields of slope $0$. Since the metric connection is torsion free, $U$ and $W$ satisfy
\[
\nabla_UV = \nabla_V U + [U,V] = \nabla_V U \qquad \text{and}\qquad \nabla_WV = \nabla_V W + [W,V] = \nabla_V W
\]
Moreover, $\langle U,W\rangle$ is slope $1$ since $U$ and $W$ are slope $0$. Thus we have
\[\langle U,W\rangle = \nabla_V\langle U,W\rangle = \langle \nabla_VU,W\rangle + \langle U,\nabla_VW\rangle = \langle \nabla_UV,W\rangle + \langle U,\nabla_WV\rangle\]
Since $U$ and $W$ are arbitrary, this formula is satisfied fiberwise on $TM$, i.e. for all vector fields. \end{proof}

\begin{lemma}[Volume Form] \label{lem:V_linear_volume} The metric volume form $\mu_g$ of $g$ is $V$-linear of slope $\frac{\on{dim}(M)}{2}$. \end{lemma}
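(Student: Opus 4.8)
The plan is to compute the Lie derivative $\mathcal{L}_V \mu_g$ directly and recognize the proportionality constant. Recall that for any metric $g$, the associated volume form $\mu_g$ can be written in a local positively-oriented orthonormal coframe $e^1, \dots, e^n$ (where $n = \dim(M)$) as $\mu_g = e^1 \wedge \cdots \wedge e^n$. The cleanest route is to use the flow $\Phi \colon \R \times M \to M$ of $V$ and the relation $\Phi_t^* g = e^t g$ established in the proof of Lemma~\ref{lem:covariant_derivative_linear}: since the volume form is a pointwise-algebraic function of the metric, scaling the metric by the constant factor $e^t$ scales $\mu_g$ by $(e^t)^{\dim(M)/2} = e^{t \cdot \dim(M)/2}$, i.e. $\Phi_t^* \mu_g = e^{t \cdot \dim(M)/2}\, \mu_g$.

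Concretely, I would argue as follows. Write $m = \dim(M)$. If $g' = c \cdot g$ for a positive constant $c$, and $e^1, \dots, e^m$ is a $g$-orthonormal coframe, then $\sqrt{c}\, e^1, \dots, \sqrt{c}\, e^m$ is a $g'$-orthonormal coframe, so $\mu_{g'} = (\sqrt{c})^m\, \mu_g = c^{m/2}\, \mu_g$. Applying this with $g' = \Phi_t^* g = e^t g$ and $c = e^t$ gives $\Phi_t^* \mu_g = \mu_{\Phi_t^* g} = e^{tm/2}\, \mu_g$, where the first equality uses naturality of the volume form under pullback by a diffeomorphism composed with the fact that $\Phi_t$ is orientation preserving (being isotopic to the identity). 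Differentiating at $t = 0$ yields
\[
\mathcal{L}_V \mu_g = \frac{d}{dt}\Big|_{t=0} \Phi_t^* \mu_g = \frac{m}{2}\, \mu_g,
\]
which is exactly the assertion that $\mu_g$ is $V$-linear of slope $\dim(M)/2$.

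There is no serious obstacle here; the only point requiring a moment of care is the identity $\mu_{\Phi_t^* g} = \Phi_t^* \mu_g$, which is the statement that the Riemannian volume form is natural with respect to pullback by orientation-preserving diffeomorphisms — this is standard and follows since $\Phi_t$, being the time-$t$ map of a flow, is connected to $\mathrm{id}_M$ and hence orientation preserving. An alternative, equally short proof avoids the flow entirely: decompose $V = V^\perp + f\, \nu$ relative to any hypersurface, or simply invoke Lemma~\ref{lem:properties_of_linear_tensors}(b) after writing $\mu_g$ as built from $m$ copies of a slope-$1/2$ object. Indeed, if $e^1, \dots, e^m$ is a local $g$-orthonormal coframe, each $e^i$ has "slope $1/2$" in the sense that the $g$-norm being slope $1$ forces $\mathcal{L}_V$ to act on the unit coframe with total slope $m \cdot \tfrac12$ on the top wedge; making this precise is essentially the computation above, so the flow-based argument is the most economical and I would present that one.
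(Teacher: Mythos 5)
Your proposal is correct and is essentially identical to the paper's proof: both use the flow identity $\Phi_t^*g = e^t g$, the naturality and scaling behavior $\Phi_t^*\mu_g = \mu(e^t g) = e^{t\dim{M}/2}\mu_g$, and differentiation at $t=0$. The extra justifications you supply (orthonormal coframe computation, orientation preservation) are fine but not needed beyond what the paper records.
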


\begin{proof} We briefly adopt the notation $\mu_g = \mu(g)$. Consider the flow $\Phi$ of $V$, and note that
\[
\Phi^*_t\mu(g) = \mu(\Phi^*_tg) = \mu(e^{t}g) = e^{t\on{dim}(M)/2} \cdot \mu(g)
\]
Taking the derivative at $t = 0$ yields the desired result. \end{proof}

As an immediate corollary of Lemma \ref{lem:covariant_derivative_linear}, we note that the gradient, divergence and Laplacian of a tensor are all $V$-linear.

\begin{cor} \label{cor:grad_div_laplacian_linear} Let $F$ and $U$ be a $V$-linear function and vector field, both of slope $a$. Then
\[
\nabla F \qquad \on{div}(U) \qquad \Delta F \qquad \text{are all $V$-linear of slope }a - 1
\]\end{cor}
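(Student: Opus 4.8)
The plan is to reduce everything to Lemma~\ref{lem:covariant_derivative_linear} and Lemma~\ref{lem:properties_of_linear_tensors}, after one preliminary observation: \emph{the inverse metric $g^{-1}$ on $T^*M$ is $V$-linear of slope $-1$}. This follows by differentiating the identity $g^{ik}g_{kj}=\delta^i_j$ along the flow of $V$: since $\mathcal{L}_V g = g$ and the identity endomorphism of $TM$ has slope $0$, one gets $(\mathcal{L}_V g^{-1})\cdot g = -\,g^{-1}\cdot g$, hence $\mathcal{L}_V g^{-1} = -g^{-1}$ (equivalently, $\Phi_t^* g = e^t g$ forces $\Phi_t^*(g^{-1}) = e^{-t}g^{-1}$). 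I will also use the routine strengthening of Lemma~\ref{lem:properties_of_linear_tensors}(b) from tensor products to arbitrary contractions of tensors: because $\mathcal{L}_V$ is a derivation that commutes with every contraction, a contraction of a slope-$a$ tensor with a slope-$b$ tensor is $V$-linear of slope $a+b$.

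Granting these, the three assertions are a short computation. For the gradient, $\nabla F$ is the contraction of $g^{-1}$ (slope $-1$) against $dF$, and $dF$ is $V$-linear of slope $a$ by Lemma~\ref{lem:properties_of_linear_tensors}(d); hence $\nabla F$ is $V$-linear of slope $a-1$. For the Laplacian, write $\Delta F = \on{tr}_g(\nabla(dF))$: the $1$-form $dF$ has slope $a$, Lemma~\ref{lem:covariant_derivative_linear} gives that $\nabla(dF)$ again has slope $a$, and $\on{tr}_g(\cdot)$ is contraction against $g^{-1}$, which lowers the slope by $1$; thus $\Delta F$ has slope $a-1$ (one may instead observe $\Delta F = \on{div}(\nabla F)$ and quote the divergence case for the slope-$(a-1)$ field $\nabla F$). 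For the divergence, use $\on{div}(U)\,\mu_g = \mathcal{L}_U\mu_g = d(\iota_U\mu_g)$, so that $\on{div}(U)$ is obtained from $U$ and $\mu_g$ by a contraction, an exterior derivative, and a quotient by $\mu_g$; since $\mu_g$ is $V$-linear of slope $\frac{\on{dim}(M)}{2}$ by Lemma~\ref{lem:V_linear_volume} and exterior differentiation preserves slope by Lemma~\ref{lem:properties_of_linear_tensors}(d), the same slope arithmetic applies and $\on{div}(U)$ comes out $V$-linear of slope $a-1$.

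There is essentially no obstacle here — this is exactly the kind of statement the authors are right to record as an immediate corollary — but the one point to watch is the bookkeeping of which operations carry a net factor of $g$ or $g^{-1}$ (and hence shift the slope by $\pm1$) versus which, like $d$ and $\nabla$, are slope-preserving. Once that is tracked, each of $\nabla F$, $\on{div}(U)$ and $\Delta F$ picks up exactly one net contraction against the slope-$(-1)$ tensor $g^{-1}$, which is the whole content of the corollary.
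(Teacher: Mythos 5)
Your treatment of $\nabla F$ and $\Delta F$ is correct and is exactly the intended mechanism: the paper offers no proof beyond calling the corollary immediate from Lemma \ref{lem:covariant_derivative_linear}, and the point is the one you identify --- $d$ and $\nabla$ preserve slope, the inverse metric $g^{-1}$ has slope $-1$, and each of $\nabla F = g^{-1}(dF,\cdot)$ and $\Delta F = \on{tr}_g(\nabla dF)$ involves exactly one contraction against $g^{-1}$.

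The divergence step, however, does not close. Run your own bookkeeping to the end: $U$ has slope $a$, $\mu_g$ has slope $\on{dim}(M)/2$ by Lemma \ref{lem:V_linear_volume}, so $\iota_U\mu_g$ has slope $a+\on{dim}(M)/2$, exterior differentiation preserves this, and dividing by $\mu_g$ leaves slope $a$ --- not $a-1$. There is no net factor of $g^{-1}$ in the divergence: $\on{div}(U)=\on{tr}(\nabla U)$ is a natural, metric-free contraction of the slope-$a$ tensor $\nabla U$, hence slope-preserving. Concretely, on $\R^{2n}\setminus 0$ with $V$ the Liouville field, $U=x_1\partial_{x_1}$ has slope $0$ while $\on{div}(U)=1$ is a nonzero constant, hence slope $0$, not $-1$. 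So the clause about $\on{div}(U)$ in the corollary as printed is a misstatement (the correct slope is $a$), and your write-up asserts a conclusion that your own chain of operations contradicts. The same slip surfaces in your parenthetical alternative for the Laplacian: if $\on{div}$ really lowered slope by one, then $\Delta F=\on{div}(\nabla F)$ applied to the slope-$(a-1)$ field $\nabla F$ would yield $a-2$, contradicting your (correct) primary computation of $a-1$. Note that the paper's actual use of the divergence, in the proof of Proposition \ref{prop:variation_of_S}, takes $\on{div}(h\cdot\nabla H)$ of the slope-$1$ field $h\cdot\nabla H$ to have slope $1$, which is consistent only with the slope-preserving version; so nothing downstream depends on the erroneous clause, but you should have flagged the discrepancy rather than forcing the arithmetic to match the stated conclusion.
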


We will also need the following lemma in the next section.

\begin{lemma} \label{lem:V_linear_pos_def} Let $H$ be a $V$-linear function of slope $1$ with positive semi-definite Hessian $\nabla^2H:TM \to TM$ and suppose that $\nabla V:TM \to TM$ is self-adjoint. Then
\[H \cdot \Delta H \ge \frac{1}{2}|\nabla H|^2\]\end{lemma}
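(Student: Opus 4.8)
The plan is to exploit the assumption that $H$ is $V$-linear of slope $1$ in order to express $\nabla H$ in terms of $V$, and then to combine this with the self-adjointness of $\nabla V$ and the positive semi-definiteness of $\nabla^2 H$. First I would observe that the slope condition $\mathcal{L}_V H = H$ is exactly the statement $dH(V) = \langle \nabla H, V\rangle = H$. Differentiating this identity covariantly in an arbitrary direction $U$ gives $\langle \nabla_U \nabla H, V\rangle + \langle \nabla H, \nabla_U V\rangle = \langle \nabla H, U\rangle$, i.e., using the Hessian notation $\nabla^2 H(U) = \nabla_U \nabla H$, we get $\langle \nabla^2 H(U), V\rangle = \langle \nabla H, U\rangle - \langle \nabla H, \nabla_U V\rangle$ for every $U$.

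The key step is to specialize this to $U = \nabla H$. Since $\nabla V$ is self-adjoint, $\langle \nabla H, \nabla_{\nabla H} V\rangle = \langle \nabla V(\nabla H), \nabla H\rangle$, which I want to bound or identify. Here I would invoke the second formula of Lemma \ref{lem:covariant_derivative_linear}: since $\nabla V$ is self-adjoint, the symmetrized identity $\langle \nabla_U V, W\rangle + \langle U, \nabla_W V\rangle = \frac{1}{2}\langle U, W\rangle$ with $U = W = \nabla H$ yields $\langle \nabla_{\nabla H} V, \nabla H\rangle = \frac{1}{4}|\nabla H|^2$. Wait — more carefully, self-adjointness of $\nabla V$ means $\langle \nabla_U V, W\rangle = \langle U, \nabla_W V\rangle$, so Lemma \ref{lem:covariant_derivative_linear} gives $2\langle \nabla_U V, W\rangle = \frac{1}{2}\langle U, W\rangle$, hence $\langle \nabla_U V, W\rangle = \frac{1}{4}\langle U, W\rangle$; in particular $\langle \nabla_{\nabla H} V, \nabla H\rangle = \frac{1}{4}|\nabla H|^2$. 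Plugging $U = \nabla H$ into the differentiated slope identity then gives $\langle \nabla^2 H(\nabla H), V\rangle = |\nabla H|^2 - \frac{1}{4}|\nabla H|^2 = \frac{3}{4}|\nabla H|^2$.

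Next I would bound the left-hand side using positive semi-definiteness of $\nabla^2 H$ via the Cauchy–Schwarz inequality for the positive semi-definite bilinear form $(a,b) \mapsto \langle \nabla^2 H(a), b\rangle$:
\[
\Big(\langle \nabla^2 H(\nabla H), V\rangle\Big)^2 \le \langle \nabla^2 H(\nabla H), \nabla H\rangle \cdot \langle \nabla^2 H(V), V\rangle.
\]
Now $\langle \nabla^2 H(\nabla H), \nabla H\rangle \le |\nabla^2 H(\nabla H)|\,|\nabla H|$, but a cleaner route is to note $\langle \nabla^2 H(V), V\rangle = \mathcal{L}_V(\mathcal{L}_V H) - (\text{lower order})$; in fact differentiating $\langle \nabla H, V\rangle = H$ along $V$ gives $\langle \nabla^2 H(V), V\rangle + \langle \nabla H, \nabla_V V\rangle = \langle \nabla H, V\rangle = H$, and $\langle \nabla H, \nabla_V V\rangle = \frac14 \langle V, \nabla H\rangle = \frac14 H$ by the computation above, so $\langle \nabla^2 H(V), V\rangle = \frac34 H$. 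Similarly, $\langle \nabla^2 H(\nabla H), \nabla H\rangle \le |\nabla H|\cdot|\nabla^2 H(\nabla H)|$ is not quite what I need; instead I bound it by $\Delta H \cdot |\nabla H|^2$? — this is the step needing care. The trace bound $\langle \nabla^2 H(w), w\rangle \le \Delta H \cdot |w|^2$ holds whenever $\nabla^2 H \ge 0$, since then every eigenvalue is at most the sum $\Delta H$. Hence $\langle \nabla^2 H(\nabla H), \nabla H\rangle \le \Delta H \cdot |\nabla H|^2$. Combining, $\big(\frac34|\nabla H|^2\big)^2 \le \Delta H\,|\nabla H|^2 \cdot \frac34 H$, which simplifies to $\frac{3}{4}\,|\nabla H|^2 \le H\,\Delta H$, giving the claimed inequality (indeed with the better constant $3/4$; the weaker $1/2$ in the statement then follows a fortiori, and I would just track whichever constant the rest of the paper needs).

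**The main obstacle** I anticipate is getting the self-adjointness hypothesis on $\nabla V$ to interact correctly with Lemma \ref{lem:covariant_derivative_linear}: the lemma gives the symmetrized identity $\langle \nabla_U V, W\rangle + \langle U, \nabla_W V\rangle = \frac12\langle U, W\rangle$ for \emph{all} $U, W$ unconditionally, and self-adjointness upgrades this to $\langle \nabla_U V, W\rangle = \frac14\langle U, W\rangle$, i.e. $\nabla V = \frac14 \mathrm{Id}$ pointwise — a strong conclusion I should double-check is actually what is intended, but it is exactly what makes the cross-terms collapse. The other delicate point is justifying the pointwise trace bound $\langle \nabla^2 H(w), w\rangle \le (\Delta H)|w|^2$ for positive semi-definite symmetric operators, which is elementary spectral theory. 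Everything else is a direct computation once the slope identity $\langle \nabla H, V\rangle = H$ is differentiated twice.
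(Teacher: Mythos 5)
Your overall strategy is the same as the paper's: differentiate the slope identity $\langle \nabla H, V\rangle = H$, use self-adjointness of $\nabla V$ together with the symmetrization identity of Lemma \ref{lem:covariant_derivative_linear} to identify $\nabla^2 H(V)$ as a multiple of $\nabla H$, and then finish with a positive semi-definite trace estimate (the paper uses $|\nabla^2H(U)|^2 \le \Delta H \cdot \langle \nabla^2H(U),U\rangle$ with $U = V$; your Cauchy--Schwarz for the form $(a,b)\mapsto\langle\nabla^2H(a),b\rangle$ combined with $\langle \nabla^2H(w),w\rangle \le \Delta H\,|w|^2$ is an equivalent packaging, and that trace bound is indeed elementary spectral theory).

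However, the constants in your key step are off, and your ``improved'' conclusion $H\cdot\Delta H \ge \frac34|\nabla H|^2$ is false. The displayed second formula of Lemma \ref{lem:covariant_derivative_linear} carries a spurious factor $\frac12$: its own proof (differentiating the slope-$1$ function $\langle U,W\rangle$ along $V$ for slope-$0$ fields $U,W$) gives $\langle\nabla_UV,W\rangle + \langle U,\nabla_WV\rangle = \langle U,W\rangle$, i.e.\ the self-adjoint part of $\nabla V$ is $\frac12\on{Id}$, not $\frac14\on{Id}$; this is also what the application in \S\ref{subsec:proof_of_main_inequality} requires, where $\nabla Z = \frac12\on{Id}$ for the standard Liouville field. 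So the conclusion you flagged as suspicious, $\nabla V = \frac14\on{Id}$, is exactly where the discrepancy sits. With the correct constant your computations become $\nabla^2H(V) = \frac12\nabla H$ and $\langle \nabla^2H(V),V\rangle = \frac12 H$, and your chain then yields precisely $H\cdot\Delta H \ge \frac12|\nabla H|^2$ --- which is sharp: for $H$ a positive multiple of $x_1^2$ (slope $1$ for the standard Liouville field, with positive semi-definite Hessian), or in the one-dimensional model $V = \frac12 x\,\partial_x$, $H = cx^2$, one has $H\cdot\Delta H = \frac12|\nabla H|^2$, so no constant better than $\frac12$ can hold. Once the constant is corrected, your argument is essentially the paper's proof.
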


\begin{proof} Note that $\langle \nabla H,V\rangle = \mathcal{L}_VH = H$ since $H$ is slope $1$. Therefore, we can compute that for any vector-field $W$, we have
\[\langle W,\nabla H\rangle = \nabla_{W} H = \nabla_{W}\langle \nabla H,V\rangle = \langle W,\nabla^2H(V)\rangle + \langle W,\nabla_{\nabla H}V\rangle\]
The self-adjoint part of $\nabla V$ is $\frac{1}{2} \cdot \on{Id}$ by Lemma \ref{lem:covariant_derivative_linear}. Since $\nabla V$ is assumed to be self-adjoint, we thus conclude that
\[\langle W,\nabla H\rangle = \langle W,\nabla^2H(V)\rangle + \frac{1}{2} \langle W,\nabla H\rangle \text{ for all }W \qquad\text{and thus}\qquad 2 \cdot \nabla^2H(V) = \nabla H\]
Finally, note that if $\nabla^2 H$ is positive definite, then we know that
\[\langle \nabla^2H(U),\nabla^2H(U)\rangle \le \on{tr}(\nabla^2H) \cdot \langle \nabla^2H(U),U\rangle = \Delta H \cdot \langle \nabla^2H(U),U\rangle \qquad\text{for any vector field }U\]
 Applying this inequality to the Hessian $\nabla^2 H$ and the formula $2 \cdot \nabla^2 H(V) = \nabla H$, we find that
\[|\nabla H|^2 \le 4 \cdot |\nabla^2 H(V)|^2 \le 4 \cdot \Delta H \cdot \langle V,\nabla ^2 H(V)\rangle = 2 \cdot \Delta H \cdot \langle V,\nabla H\rangle = 2 H \cdot \Delta H \qedhere\]
\end{proof}

\subsection{Laplacian Functional} \label{subsec:laplacian_functional} Let $(M,g)$ be a Riemannian manifold with a cylindrical vector field $V$ such that $g$ is $V$-linear of slope $1$. Consider the space of $V$-linear functions
\[
\Gamma(M;V) := \{h \in C^\infty(M;\R) \; : \; Vh = h \}
\]
There is a convex open subset $U(M;V) \subset \Gamma(M;V)$ consisting of positive functions.
\[
U(M;V) := \{H \in \Gamma(M;V) \; : \; H > 0 \}
\]
Note that the sub-level set $X = H^{-1}(-\infty,1]$ (or equivalently, $H^{-1}(0,1]$) is a cylindrical domain for any $H \in U(M;V)$. The purpose of this section is to study the following functional on $U(M;V)$. 
\[
S:U(M;V) \to \R \qquad \text{given by}\qquad S(H) := \int_X \Delta H \cdot \mu_g \qquad\text{with}\qquad X := H^{-1}(-\infty,1]
\]

We begin by computing a useful formula for the variation of $S$. 

\begin{prop}[Variation] \label{prop:variation_of_S} The variation $\delta S$ of the functional $S:U(M;V) \to \R$ is given by
\[
\delta S_H(h) = \int_{\partial X} h \cdot (\frac{\on{dim}(M) + 2}{2} \cdot |\nabla H|^2 - 2\Delta H) \cdot \iota_V\mu_g
\]
\end{prop}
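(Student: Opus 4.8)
The plan is to compute $\delta S_H(h) = \frac{d}{d\varepsilon}\big|_{\varepsilon=0} S(H + \varepsilon h)$ directly, being careful about the two sources of $\varepsilon$-dependence: the integrand $\Delta H$ and the domain of integration $X = X_\varepsilon = (H+\varepsilon h)^{-1}(-\infty,1]$. First I would split the variation into a ``bulk'' term coming from differentiating $\Delta H$ with the domain fixed, and a ``boundary'' term coming from moving $\partial X$. The bulk term is $\int_X \Delta h \cdot \mu_g$. For the boundary term, the normal velocity of $\partial X_\varepsilon$ at $\varepsilon = 0$ is $-h / |\nabla H|$ (since $H$ increases across $\partial X$ in the direction of $\nabla H / |\nabla H|$), so this contributes $-\int_{\partial X} \Delta H \cdot \frac{h}{|\nabla H|} \cdot dA_g$, where $dA_g$ is the induced surface measure. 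The first simplification to make is to rewrite $dA_g$ in terms of $\iota_V \mu_g$: since $V$ is transverse to $\partial X$ and $\langle \nabla H, V\rangle = \mathcal{L}_V H = H = 1$ on $\partial X$, the outward unit normal is $\nu = \nabla H/|\nabla H|$ and $\iota_V \mu_g|_{\partial X} = \langle V, \nu\rangle \, dA_g = \frac{1}{|\nabla H|} dA_g$, hence $dA_g = |\nabla H| \cdot \iota_V \mu_g$ on $\partial X$. So the boundary term becomes $-\int_{\partial X} h \cdot \Delta H \cdot \iota_V \mu_g$.

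Next I would handle the bulk term $\int_X \Delta h \cdot \mu_g$ by integration by parts, aiming to convert it into a boundary integral; this is where the $V$-linearity hypotheses do the real work. Writing $\Delta h = \on{div}(\nabla h)$ and applying the divergence theorem, $\int_X \on{div}(\nabla h) \mu_g = \int_{\partial X} \langle \nabla h, \nu\rangle \, dA_g = \int_{\partial X} \langle \nabla h, \nabla H\rangle \cdot \iota_V \mu_g$ using $dA_g = |\nabla H|\,\iota_V\mu_g$ and $\nu = \nabla H/|\nabla H|$. Now I need to express $\langle \nabla h, \nabla H\rangle$ on $\partial X$ in terms of $h$. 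Here I invoke the identity $2\nabla^2 H(V) = \nabla H$ from Lemma \ref{lem:V_linear_pos_def} — but more simply, the clean route is: differentiate $\langle \nabla H, V\rangle = H$ in the direction $\nabla h$ to relate things, or alternatively use that $\mathcal{L}_V h = h$ to get $\langle \nabla h, V \rangle = h$, and then relate $\langle \nabla h, \nabla H\rangle$ to $\langle \nabla h, V\rangle = h$ via the structure of $\nabla H$. Actually the slickest approach: since $h$ and $H$ are both slope-$1$, so is $\langle \nabla h, \nabla H\rangle$ minus appropriate correction — I would use $2\nabla^2 H(V) = \nabla H$ to write $\langle \nabla h, \nabla H\rangle = 2\langle \nabla h, \nabla^2 H(V)\rangle = 2\langle \nabla^2 h(V), \nabla H\rangle = \langle \nabla H, \nabla H\rangle \cdot (\text{something})$; more carefully, by symmetry of the Hessian and $2\nabla^2 h(V) = \nabla h$ (same lemma applied to $h$), we get $\langle \nabla h, \nabla H\rangle = \langle 2\nabla^2 h(V), \nabla H\rangle = 2\langle V, \nabla^2 h(\nabla H)\rangle = 2\langle V, \nabla^2 h(2\nabla^2 H(V))\rangle$, which is getting circular. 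The honest plan is to compute $\langle \nabla h, \nabla H \rangle$ on $\partial X$ using the first formula in Lemma \ref{lem:covariant_derivative_linear}: $\langle \nabla_{\nabla H} V, \nabla h\rangle + \langle \nabla H, \nabla_{\nabla h} V\rangle = \tfrac12 \langle \nabla H, \nabla h\rangle$, combined with differentiating the slope relations, to extract a clean multiple of $h$; I expect the outcome to be $\langle \nabla h, \nabla H\rangle = \tfrac{\on{dim}(M)}{2}\, h \cdot |\nabla H|^2$ on $\partial X$ after using $\mathcal{L}_V(\iota_V \mu_g)$ and the slope of $\mu_g$ — wait, more plausibly the $h |\nabla H|^2$ coefficient combines with a factor from the slope of the volume form; in any case the target coefficient $\frac{\on{dim}(M)+2}{2}$ strongly suggests that the bulk boundary term contributes $\frac{\on{dim}(M)+2}{2} |\nabla H|^2$ after combining with the $-\Delta H$ already found, or that it splits as $\frac{\on{dim}(M)}{2} + 1$.

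Finally I would assemble: $\delta S_H(h) = \int_{\partial X} h\left(\langle \nabla h, \nabla H\rangle / h - \Delta H\right)\iota_V \mu_g$ and match coefficients with the claimed formula $\int_{\partial X} h\big(\tfrac{\on{dim}(M)+2}{2}|\nabla H|^2 - 2\Delta H\big)\iota_V\mu_g$. Comparing, I should find that the integration-by-parts boundary term yields exactly $\big(\tfrac{\on{dim}(M)+2}{2}|\nabla H|^2 - \Delta H\big)$, so that adding the domain-variation term $-\Delta H$ produces the stated $-2\Delta H$. \textbf{The main obstacle} is the identity computing $\langle \nabla h, \nabla H\rangle$ on $\partial X$ in terms of $h|\nabla H|^2$ with the precise constant $\tfrac{\on{dim}(M)+2}{2}$; this requires combining the Hessian relation $2\nabla^2 H(V) = \nabla H$ (Lemma \ref{lem:V_linear_pos_def}), the conformal Killing identity for $\nabla V$ (Lemma \ref{lem:covariant_derivative_linear}), the slope-$\tfrac{\on{dim}(M)}{2}$ property of $\mu_g$ (Lemma \ref{lem:V_linear_volume}), and the slope-$(\on{dim}(M)/2 - 1)$ property of $\iota_V\mu_g$ restricted to level sets, so that the conformal factor $e^t$ under the $V$-flow accounts for the ``$+2$'' correction relative to the naive dimension count. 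Everything else — the divergence theorem, the normal-velocity computation for the moving boundary, and the substitution $dA_g = |\nabla H|\,\iota_V\mu_g$ — is routine.
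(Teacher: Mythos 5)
Your setup is correct and coincides with the paper's: the decomposition
\[
\delta S_H(h) = \int_X \Delta h \cdot \mu_g - \int_{\partial X} h\cdot \Delta H\cdot \iota_V\mu_g,
\]
the normal-velocity computation for the moving boundary, the identity $\iota_\nu\mu_g = |\nabla H|\cdot\iota_V\mu_g$ on $\partial X$, and the first application of the divergence theorem giving $\int_X\Delta h\cdot\mu_g = \int_{\partial X}\langle\nabla h,\nabla H\rangle\cdot\iota_V\mu_g$ are all exactly the paper's steps. The gap is in what you yourself flag as the main obstacle: you propose to finish by proving a \emph{pointwise} identity on $\partial X$ expressing $\langle\nabla h,\nabla H\rangle$ as $h$ times a universal combination of $|\nabla H|^2$ and $\Delta H$. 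No such pointwise identity exists. For example, on $\R^2\setminus 0$ with $V = \tfrac12(x\partial_x + y\partial_y)$, $H = 2x^2+y^2$ and $h = x^2$, at the boundary point $(1/2,1/\sqrt2)$ one has $\langle\nabla h,\nabla H\rangle = 2$ while $h\bigl(\tfrac{\dim{M}+2}{2}|\nabla H|^2 - \Delta H\bigr) = \tfrac14(12-6) = 3/2$. The identity you need holds only after integration over $\partial X$. (A secondary issue: the relation $2\nabla^2H(V)=\nabla H$ you invoke requires $\nabla V$ self-adjoint, a hypothesis of Proposition \ref{prop:sandwich_estimate} but not of Proposition \ref{prop:variation_of_S}.)

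The correct mechanism, which your proposal is missing, is global rather than pointwise. Write $\langle\nabla h,\nabla H\rangle = \on{div}(h\nabla H) - h\,\Delta H$ by Leibniz; the second term already produces the extra $-\int_{\partial X}h\Delta H\,\iota_V\mu_g$ that doubles the Laplacian term. For the first term, observe that $h\nabla H$ is a $V$-linear vector field of slope $1$, so $\on{div}(h\nabla H)\cdot\mu_g$ is a $V$-linear volume form of slope $\tfrac{\dim{M}+2}{2}$; Lemma \ref{lem:properties_of_linear_tensors}(c) then converts $\int_{\partial X}\on{div}(h\nabla H)\,\iota_V\mu_g$ into $\tfrac{\dim{M}+2}{2}\int_X\on{div}(h\nabla H)\,\mu_g$, and a second application of Stokes turns that bulk integral back into $\int_{\partial X}h|\nabla H|^2\,\iota_V\mu_g$. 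This two-step passage (boundary $\to$ bulk via $V$-linearity, bulk $\to$ boundary via Stokes) is where the coefficient $\tfrac{\dim{M}+2}{2}$ actually comes from, and it cannot be replaced by a fiberwise computation.
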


\begin{proof} Fix a function $H \in U(M;V)$ and a tangent vector $h \in \Gamma(M;V)$.  We set
\[H_t := H + t \cdot h \qquad\text{and}\qquad X_t := H_t^{-1}(-\infty,1]\] 
The variation $\delta S_H(h)$ of $S$ along $h$ is the time derivative of $S(H_t)$ at $t = 0$.
\[
\delta S_H(h) = \frac{d}{dt} \big(\int_{X_t} \Delta H_t \cdot \mu_g\big)|_{t=0} = \int_X \Delta \big(\frac{dH_t}{dt}\big)|_{t=0} \cdot \mu_g + \int_{\partial X} \Delta H \cdot \big(\iota_{\frac{dX}{dt}} \mu_g\big)|_{t=0} \]
Here $\frac{dX}{dt}$ is the variation of $X_t$ at $t = 0$, i.e. a vector field along $\partial X$ given as $\frac{d\Psi}{dt}$ for a family of parametrizations $\Psi_t:\partial X \simeq \partial X_t$. Note that this depends on $\Psi$, but $\iota_{\frac{dX}{dt}}\mu_g|_{\partial X}$ does not.

\begin{lemma} Under a specific parametrization of $\partial X_t$, the variation $\frac{dX}{dt}$ of $X_t$ at $t = 0$ is given by
\[\frac{dX}{dt} = -h \cdot V\]
\end{lemma}

\begin{proof} Recall that the flow $\Phi$ of $V$ determines a diffeomorphism $\Phi:\R_r \times \partial X \simeq M$. In these coordinates, $V = \partial_r$ and $H = e^{r}$. Furthermore, $H_t = f_t \cdot e^{r}$ where
\[f_t:\partial X \to \R \qquad\text{satisfies}\qquad f_0 = 1 \quad\text{and}\quad \frac{df}{dt} = h \cdot e^{-r} = h \cdot H^{-1}\]
For small $t$, the boundary $\partial X_t$ may be parametrized via
\[
\Psi_t:\partial X \to \R \times \partial X \qquad\text{with}\qquad \Psi_t(x) = (-\log(f_t(x)), x)
\]
The variation of the boundary $\frac{dX}{dt}$ under the parametrization $\Psi_t$ is thus
\[
\frac{dX}{dt} = -\frac{df}{dt}|_{t=0} \cdot f_0^{-1} \cdot \partial_r = -h \cdot V \qedhere
\] \end{proof}

Returning to the proof of Proposition \ref{prop:variation_of_S}, we apply our formulas for the variations of $X_t$ and $H_t$ to acquire the following expression.
\begin{equation} \label{eqn:variation_of_S_1}
\delta S_H(h) = \int_X \Delta h \cdot \mu_g - \int_{\partial X} h \cdot \Delta H \cdot \iota_V\mu_g
\end{equation}
We now proceed to analyze the first integral in (\ref{eqn:variation_of_S_1}). Using the divergence theorem and the fact that $|\nabla H| \cdot \nu = \nabla H$ on any regular level set of $H$, we may write
\begin{equation} \label{eqn:variation_of_S_2}
\int_X \Delta h \cdot \mu_g = \int_{\partial X} \langle \nu,\nabla h\rangle \cdot \iota_\nu\mu_g = \int_{\partial X}  \langle \frac{\nabla H}{|\nabla H|},\nabla h\rangle \cdot \iota_\nu\mu_g
\end{equation}
Next, we note that $|\nabla H| \cdot \langle V,\nu\rangle = 1$ and $\langle V,\nu\rangle \cdot \iota_\nu \mu_g = \iota_V\mu_g$ on $\partial X$. Therefore
\begin{equation} \label{eqn:variation_of_S_3}
\int_{\partial X}  \langle \frac{\nabla H}{|\nabla H|},\nabla h\rangle \cdot \iota_\nu\mu_g = \int_{\partial X}  \langle \nabla H,\nabla h\rangle \cdot \langle V,\nu\rangle \cdot \iota_\nu\mu_g = \int_{\partial X}  \langle \nabla H,\nabla h\rangle \cdot \iota_V\mu_g 
\end{equation}
Using the Leibniz rule for the covariant derivative $\nabla$, we thus find that
\begin{equation} \label{eqn:variation_of_S_4}
\int_X \Delta h \cdot \mu_g = \int_{\partial X}  \langle \nabla H,\nabla h\rangle \cdot \iota_V\mu_g = \int_{\partial X}  \on{div}(h \cdot \nabla H\rangle) \cdot \iota_V\mu_g - \int_{\partial X} h \cdot \Delta H \cdot \iota_V\mu_g 
\end{equation}

Now focus on the first integral on the righthand side. Since $\nabla H$ is linear of slope $0$ and $h$ is slope $1$, the divergence $\on{div}(h \cdot \nabla H)$ is linear of slope $1$. Therefore
\[\on{div}(h \cdot \nabla H) \cdot \mu_g \qquad \text{is a linear volume form of slope}\quad \frac{\on{dim}(M) + 2}{2}\]
Thus we apply Lemma \ref{lem:properties_of_linear_tensors}(c) to find that
\begin{equation} \label{eqn:variation_of_S_5}
\int_{\partial X}  \on{div}(h \cdot \nabla H) \cdot \iota_V\mu_g  = \frac{\on{dim}(M) + 2}{2} \cdot \int_X \on{div}(h \cdot \nabla H) \cdot \mu_g
\end{equation}
Finally, we once more apply Stokes' theorem to see that
\begin{equation} \label{eqn:variation_of_S_6}
\int_X \on{div}(h \cdot \nabla H) \cdot \mu_g = \int_{\partial X} \langle \nu, h \cdot \nabla H\rangle \cdot \iota_\nu\mu_g = \int_{\partial X} h \cdot |\nabla H|^2 \cdot \iota_V\mu_g
\end{equation}
Combining the formulas (\ref{eqn:variation_of_S_5}) and (\ref{eqn:variation_of_S_6}), and plugging the result into (\ref{eqn:variation_of_S_4}), we find that
\begin{equation} \label{eqn:variation_of_S_7}
\int_X \Delta h \cdot \mu_g = \frac{\on{dim}(M) + 2}{2} \cdot \int_{\partial X} h \cdot |\nabla H|^2 \cdot \iota_V\mu_g - \int_{\partial X} h \cdot \Delta H \cdot \iota_V\mu_g
\end{equation}
Plugging (\ref{eqn:variation_of_S_7}) into (\ref{eqn:variation_of_S_1}) concludes the proof.
\end{proof}

\noindent By applying the variational formula in Proposition \ref{prop:variation_of_S}, we can deduce a sandwiching property. 

\begin{prop}[Sandwich Estimate] \label{prop:sandwich_estimate} Let $G,H:M \to \R$ be maps in $U(M;V)$. Suppose that
\[\nabla V:TM \to TM \qquad\text{is self-adjoint}\]
\[\nabla^2 G \text{ and }\nabla^2 H \text{ are positive semi-definite} \qquad \text{and}\qquad G \le H \le L \cdot G \text{ for a constant }L \ge 1\]
Then $S(G)$ bounds $S(H)$ from above, up to a constant dependent on $L$ and the dimension $d$ of $M$.
\[S(H) \le C(L,d) \cdot S(G) \qquad\text{where}\qquad C(L,d) = \on{exp}(\frac{1}{2} \cdot Ld^2) \quad\text{and}\quad d = \on{dim}(M)\] 
\end{prop}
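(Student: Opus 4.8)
The plan is to join $G$ to $H$ by the straight-line path $H_t := (1-t)G + tH$, $t\in[0,1]$, and to control the growth of $S(H_t)$ along it by combining the variational formula of Proposition~\ref{prop:variation_of_S} with the pointwise inequality of Lemma~\ref{lem:V_linear_pos_def}, ending with a Gronwall estimate. First I would check that the path is admissible: by Lemma~\ref{lem:properties_of_linear_tensors}(a) each $H_t$ is $V$-linear of slope $1$ and positive, and $\nabla^2 H_t = (1-t)\nabla^2 G + t\nabla^2 H$ is positive semi-definite, so $\Delta H_t = \on{tr}(\nabla^2 H_t)\ge 0$ and Lemma~\ref{lem:V_linear_pos_def} applies to each $H_t$ (using the hypothesis that $\nabla V$ is self-adjoint). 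Write $X_t := H_t^{-1}(-\infty,1]$ and $d = \on{dim}(M)$.

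Next I would record three facts along $\partial X_t$. (i) Since $H_t\equiv 1$ there, Lemma~\ref{lem:V_linear_pos_def} gives $|\nabla H_t|^2 \le 2\,\Delta H_t$. (ii) The function $\Delta H_t$ is $V$-linear of slope $0$ by Corollary~\ref{cor:grad_div_laplacian_linear}, so $\Delta H_t\cdot\mu_g$ is a $V$-linear volume form of slope $\tfrac d2$ and Lemma~\ref{lem:properties_of_linear_tensors}(c) yields $\int_{\partial X_t}\Delta H_t\cdot\iota_V\mu_g = \tfrac d2\,S(H_t)$. (iii) On $\partial X_t$ we have $(1-t)G + tH = 1$; feeding the sandwich hypothesis $G\le H\le L\,G$ into this relation and writing $h := H-G$ gives $h = \frac{H-1}{1-t}\ge 0$ and $h\bigl((1-t)+Lt\bigr)\le L-1$, i.e. $0\le h\le \frac{L-1}{(1-t)+Lt}$ pointwise on $\partial X_t$.

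Now apply Proposition~\ref{prop:variation_of_S} with $h=\frac{dH_t}{dt}$. Using (i) together with $\Delta H_t\ge 0$, the bracket obeys $\frac{d+2}{2}|\nabla H_t|^2 - 2\Delta H_t \le d\,\Delta H_t$ on $\partial X_t$; since $h\ge 0$ and $\iota_V\mu_g$ is a nonnegative density on $\partial X_t$ (because $V H_t = H_t = 1>0$ there, so $V$ points out of $X_t$), integrating preserves the inequality and gives $\frac{d}{dt}S(H_t) = \delta S_{H_t}(h) \le d\int_{\partial X_t} h\,\Delta H_t\cdot\iota_V\mu_g$. Bounding $h$ by (iii) and then using (ii) yields the differential inequality
\[\frac{d}{dt}S(H_t)\ \le\ \frac{d^2(L-1)}{2\bigl((1-t)+Lt\bigr)}\,S(H_t).\]
Since $S(H_t)\ge 0$ and $S(H_t)$ is differentiable in $t$ by Proposition~\ref{prop:variation_of_S}, Gronwall's lemma and the elementary computation $\int_0^1\frac{dt}{(1-t)+Lt} = \frac{\ln L}{L-1}$ give $S(H)=S(H_1)\le S(G)\exp\!\bigl(\tfrac{d^2\ln L}{2}\bigr)=S(G)\,L^{d^2/2}$, and $\ln L\le L-1\le L$ turns this into $S(H)\le \on{exp}(\tfrac12 L d^2)\,S(G)$, the claimed bound.

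I expect the main obstacle to be bookkeeping rather than any deep idea: one must keep the Hessian of $H_t$ positive semi-definite along the path so that Lemma~\ref{lem:V_linear_pos_def} stays available, verify the two sign conditions $h\ge 0$ (this is where $G\le H$, not just $H\le L G$, enters) and $\iota_V\mu_g\ge 0$ so that the variational inequality survives integration, and confirm that $\Delta H_t$ has slope $0$ so that Lemma~\ref{lem:properties_of_linear_tensors}(c) rewrites the boundary integral as $\tfrac d2 S(H_t)$ — this last point is exactly what closes the Gronwall loop.
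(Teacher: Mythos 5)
Your proof is correct and follows essentially the same route as the paper: interpolate linearly between $G$ and $H$, apply the variational formula of Proposition~\ref{prop:variation_of_S} together with the pointwise bound $|\nabla H_t|^2 \le 2\Delta H_t$ on $\partial X_t$ from Lemma~\ref{lem:V_linear_pos_def}, convert the boundary integral of $\Delta H_t$ into $\tfrac{d}{2}S(H_t)$ via the slope computation, and close with a Gronwall argument. The only difference is that you run the homotopy from $G$ to $H$ and exploit the constraint $H_t=1$ on $\partial X_t$ to bound the variation $h$ by $\frac{L-1}{(1-t)+Lt}$ rather than crudely by $L$ in absolute value, which yields the sharper constant $L^{d^2/2}$ and then the stated $\exp(\tfrac12 Ld^2)$ via $\ln L \le L$.
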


\begin{proof} Consider the family of functions and domains parametrized by $[0,1]_t$, given by
\[F_t = (1-t)\cdot H + t \cdot G = H + t \cdot (G - H) \qquad\text{and}\qquad X_t = F_t^{-1}(-\infty,1]\]
Due to our hypotheses on $G$ and $H$, $F_t$ and $X_t$ have the following properties.
\[\nabla^2 F_t \ge 0 \qquad F_s \ge F_t \quad\text{and}\quad X_s \subset X_t \qquad \text{for }s \le t\]
On $X_1$, we can bound the time derivative of $F$ from below as follows.
\[\frac{dF_t}{dt} = -|G - H| \ge -H \ge -\underset{X_1}{\on{max}}(H) \ge -L \cdot \underset{X_1}{\on{max}}(G) = -L\]
Moreover, by Lemma \ref{lem:V_linear_pos_def}, we know that
\[
2 \cdot \Delta F_t \ge |\nabla F_t|^2 \quad\text{on}\quad \partial X_t = F_t^{-1}(1) 
\]
Now we apply the formula for the variation of $S$ derived in Proposition \ref{prop:variation_of_S}.
\[
\frac{d}{dt}(S(F_t))|_{t=s} = \int_{\partial X_s} \frac{dF}{dt} \cdot (\frac{d + 2}{2} \cdot |\nabla F_s|^2 - 2 \cdot \Delta F_s) \cdot \iota_V\mu_g \ge -Ld \cdot \int_{\partial X_s} \Delta F_s \cdot \iota_V\mu_g 
\]
Now note that by Corollary \ref{cor:grad_div_laplacian_linear} $\Delta H$ is $V$-linear of slope $0$. Therefore, $\Delta H \cdot \mu_g$ is a volume form of slope $d/2$, and so by Lemma \ref{lem:V_linear_volume} we have
\[\int_{\partial X_s} \Delta F_s \cdot \iota_V\mu_g = \frac{d}{2} \cdot \int_{X_s} \Delta F_s \cdot \mu_g = \frac{d}{2} \cdot S(F_s)\]
Therefore, we acquire the following differential inequality for $S(F_s)$.
\[\frac{d}{dt}(S(F_t)) \ge -\frac{L d^2}{2} \cdot S(F_t)\]
Integrating this inequality from $0$ to $1$, we obtain the desired result.
\[C(L,d) \cdot S(G) = \exp(\frac{Ld^2}{2}) \cdot S(F_1) \ge S(F_0) = S(H) \qedhere\]\end{proof}   

\subsection{Standard Ellipsoids} \label{subsec:ellipsoids} The prototypical star-shaped, convex domains in $\C^n$ are standard ellipsoids. Here we review some facts about these domains that we will need for Theorem \ref{thm:main_inequality}.

\begin{definition} The \emph{standard ellipsoid} $E$ with symplectic widths $a_1 \le \dots \le a_n$ is the sub-level set
\[
E = H_E^{-1}(-\infty,1] \qquad \text{with} \qquad H_E:\C^n \to \R \qquad\text{given by}\qquad (z_1,\dots,z_n) = \pi \cdot \sum_i \frac{|z_i|^2}{a_i}
\]
\end{definition}

\noindent Every ellipsoid in $\C^n$ is symplectomorphic (via an affine symplectomorphism) to a standard one. Moreover, any convex body can be sandwiched between an ellipsoid and its scaling, as stated by John's ellipsoid theorem.

\begin{thm}[John Ellipsoid] \cite{j1948} \label{thm:john_ellipsoid} Let $K \subset \R^n$ be a convex domain. Then there exists a unique ellipsoid $E$ of maximal volume in $K$. Furthermore, if $c \in E$ is the center of $E$ then
\[E \subset K \subset c + n(E - c)\]
\end{thm}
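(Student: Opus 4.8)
The plan is to first establish existence and uniqueness of the maximal-volume inscribed ellipsoid, and then prove the sandwiching bound, which is the substantive part. For existence, I would observe that the set of ellipsoids contained in the compact convex body $K$ is parametrized by a compact set: an ellipsoid is determined by its center $c \in K$ and a positive-definite symmetric matrix $A$ with $\{x : (x-c)^T A (x-c) \le 1\} \subset K$, and boundedness of $K$ forces the eigenvalues of $A$ to be bounded below away from $0$, while nondegeneracy (we may assume $K$ has nonempty interior, else the statement is vacuous or trivial) gives an upper bound. Volume is a continuous function $\det(A)^{-1/2}$ up to a constant on this compact parameter set, so a maximizer exists. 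For uniqueness, I would use the standard convexity argument: if $E_0$ and $E_1$ are two maximal ellipsoids, one shows that a suitable "geometric mean" ellipsoid built from the two defining quadratic forms is also contained in $K$ and has strictly larger volume unless $E_0 = E_1$, using concavity of $\log\det$ on positive-definite matrices (the AM–GM / Minkowski-determinant inequality).

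For the sandwiching inclusion $E \subset K \subset c + n(E-c)$, after translating so that $c = 0$ and applying an affine map so that $E$ is the unit ball $B$, the claim becomes: if $B$ is the maximal-volume ellipsoid in $K$, then $K \subset nB$. I would argue by contradiction: suppose some point $p \in K$ has $|p| > n$. Consider the convex hull of $B \cup \{p\}$, which lies in $K$, and show one can inscribe in it an ellipsoid of volume strictly larger than that of $B$. Concretely, one looks for an ellipsoid of the form $\{x : \alpha|x - v|^2 + \beta \langle x - v, u\rangle^2 \le 1\}$ (an ellipsoid elongated in the direction $u = p/|p|$ and shifted slightly toward $p$ along $u$), optimizes the parameters $\alpha, \beta$ and the shift to first order, and checks that for $|p| > n$ the optimal such ellipsoid has volume $> \mathrm{vol}(B)$, contradicting maximality. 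The key inequality that makes $n$ appear is that the inscribed ellipsoid can be stretched by a factor slightly more than $1$ in one direction while shrinking by a factor slightly less than $1$ in the remaining $n-1$ directions, and the volume gain beats the volume loss precisely when the elongation budget exceeds what $n$ perpendicular directions can absorb.

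The main obstacle will be the last step — the explicit first-order optimization showing that a non-extremal configuration admits a strictly larger inscribed ellipsoid, with the constant coming out exactly as $n$. This is the classical hard core of John's theorem; I would either carry out the perturbation computation in the two-dimensional plane spanned by the shift direction and track the orthogonal directions separately, or invoke John's original characterization of the extremal ellipsoid via contact points and an associated decomposition of the identity $\mathrm{Id} = \sum \lambda_i u_i u_i^T$ with $\sum \lambda_i = n$, from which the bound $K \subset nB$ follows by a short supporting-hyperplane argument. Since this is a classical result, in the paper I would most likely just cite \cite{j1948} and sketch only the reduction to the unit ball and the role of the constant $n$, as the excerpt already does by attributing the statement to John.
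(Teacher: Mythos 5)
The paper does not prove this statement at all: it is quoted as a classical black box with the citation \cite{j1948}, and nothing in the paper depends on the internals of the proof (only the sandwiching conclusion is used, via Lemma \ref{lem:John_ellipsoid}). So there is no in-paper argument to compare against; what you have written is the standard textbook proof of John's theorem, and as an outline it is essentially correct --- existence by compactness, uniqueness by concavity of $\det^{1/n}$ (Minkowski's determinant inequality) on the convex set of pairs $(c,A)$ with $c+A(B)\subset K$, and the factor $n$ from either the perturbation argument in $\operatorname{conv}(B\cup\{p\})$ or John's decomposition of the identity $\operatorname{Id}=\sum\lambda_i u_iu_i^T$ with $\sum\lambda_i=n$ and $\sum\lambda_iu_i=0$ at the contact points. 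Your closing instinct is also the right editorial call: in this paper one would simply cite John.

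Two small soft spots in the outline, neither fatal. First, in the existence step the upper bound on the eigenvalues of $A$ does not come from nondegeneracy of $K$; a very thin inscribed ellipsoid has arbitrarily large eigenvalues. The correct fix is to restrict to ellipsoids of volume at least that of some fixed small ball inside $K$, which bounds $\det(A)$ above and, combined with the lower eigenvalue bound from boundedness of $K$, bounds each eigenvalue. Second, the Minkowski-inequality argument as usually run only forces the two maximizers to have the same shape matrix; equality of the centers needs an extra step (e.g.\ observing that the convex hull of two distinct translates of the same ellipsoid contains a strictly larger ellipsoid, stretched along the line joining the centers), unless you use the projectivized quadratic-form version of the averaging trick that treats center and shape together. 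Both are routine repairs.
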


\noindent In $\C^n$, we can assume that the John ellipsoid is standard after applying a symplectomorphism.

\begin{lemma} \label{lem:John_ellipsoid} \cite[Cor. 3.6]{ce2021} Let $X \subset \C^n$ be a convex domain. Then there is an affine symplectomorphism $\Phi:\C^n \to \C^n$ and a standard ellipsoid $E$ such that
\[
E \subset \Phi(X) \subset 2n \cdot E
\]
\end{lemma}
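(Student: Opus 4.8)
The plan is to combine John's ellipsoid theorem (Theorem~\ref{thm:john_ellipsoid}) with Williamson's symplectic normal form for positive-definite quadratic forms. First I would view $X$ as a bounded convex body with nonempty interior in $\R^{2n}$ and apply Theorem~\ref{thm:john_ellipsoid} in ambient dimension $2n$: this yields an ellipsoid $\mathcal{E}$ with center $c$ such that $\mathcal{E} \subset X \subset c + 2n(\mathcal{E}-c)$. Translation by $-c$ is an affine symplectomorphism of $\C^n$, so replacing $X$ by its image under this translation I may assume that $\mathcal{E}$ is centered at the origin and that $\mathcal{E} \subset X \subset 2n \cdot \mathcal{E}$, the dilate $c + 2n(\mathcal{E}-c)$ becoming simply $2n \cdot \mathcal{E}$ once the center sits at $0$.

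Next I would write the centered ellipsoid as $\mathcal{E} = \{x \in \R^{2n} : \langle A x, x\rangle \le 1\}$ for a uniquely determined symmetric positive-definite matrix $A$. By Williamson's theorem (cf.~\cite{ms2017}), there is a linear symplectomorphism $S \in \Sp(2n)$ such that $S^T A S$ is the block-diagonal matrix with $D = \on{diag}(\mu_1,\dots,\mu_n)$ in each diagonal block, where $\mu_1 \ge \dots \ge \mu_n > 0$ are the symplectic eigenvalues of $A$ (reordered after permuting the complex coordinates $z_j = x_j + i y_j$, which is itself a symplectic operation). Writing $y = S^{-1}x$ and using $|z_j|^2 = x_j^2 + y_j^2$, one computes $\langle A x, x\rangle = \langle S^T A S y, y\rangle = \sum_j \mu_j |z_j|^2 = H_E(z)$, where $E$ is the standard ellipsoid with symplectic widths $a_j = \pi/\mu_j$, automatically ordered $a_1 \le \dots \le a_n$. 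Hence $S^{-1}(\mathcal{E}) = E$.

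Finally I would set $\Phi := S^{-1} \circ (\text{translation by } -c)$, which is again an affine symplectomorphism, and apply the linear map $S^{-1}$ to the chain $\mathcal{E} \subset X \subset 2n \cdot \mathcal{E}$ (with $X$ already translated). This gives $E = S^{-1}(\mathcal{E}) \subset \Phi(X) \subset S^{-1}(2n \cdot \mathcal{E}) = 2n \cdot E$, which is exactly the assertion. There is no genuine obstacle in this argument; the only non-formal input is Williamson's diagonalization, and the sole point requiring care --- the closest thing to a pitfall --- is to re-center the John ellipsoid at the origin \emph{before} diagonalizing, since Williamson's theorem applies to quadratic forms rather than to arbitrary ellipsoids, and so the translation step must be separated out and absorbed into the affine symplectomorphism $\Phi$.
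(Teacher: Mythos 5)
Your argument is correct and is precisely the standard proof underlying the cited result \cite[Cor.~3.6]{ce2021}: John's ellipsoid theorem in ambient dimension $2n$, a translation to center the ellipsoid, and Williamson's normal form to bring the defining quadratic form to standard diagonal shape by a linear symplectomorphism. The one point needing care --- separating the translation from the linear diagonalization --- is exactly the one you flag, so nothing is missing.
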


For ellipsoids, most of the geometric quantities that appear in the proof of Theorem \ref{thm:main_inequality} can be computed explicitly. We record the results of that computation.

\begin{lemma}[Ellipsoid Quantities] \label{lem:ellipsoid_quantities} Let $E$ be a standard ellipsoid with symplectic widths $a_1 \le \dots \le a_n$. Then the systole period, Laplacian integral and metric volume of $E$ are given by
\[c(E) = a_1 \qquad S(H_E) = \frac{4\pi}{n!} \cdot \big(\sum_i \frac{1}{a_i}\big) \cdot \prod_i a_i \qquad \on{vol}_g(E) = \frac{1}{n!} \cdot \prod_i a_i\]
In particular, these quantities obey the following inequalities.
\[
4\pi \cdot \on{vol}_g(E) \le c(E) \cdot S(H_E) \le 4\pi n \cdot \on{vol}_g(E)
\]
\end{lemma}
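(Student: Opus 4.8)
The plan is to compute all three quantities directly from the explicit formula $H_E(z) = \pi\sum_i |z_i|^2/a_i$ and then combine them. First note that $H_E$ is $Z$-linear of slope $1$ for the standard Liouville field $Z = \tfrac12\sum_i (x_i\partial_{x_i} + y_i\partial_{y_i})$ and that $H_E^{-1}(1) = \partial E$, so $H_E$ is the canonical Hamiltonian of the star-shaped domain $E$ and lies in the domain $U(\C^n;Z)$ of the functional $S$. For the systole I would compute the Hamiltonian vector field of $H_E$: it rotates each complex coordinate line $\ell_i = \{z_j = 0 : j\neq i\}$ at angular speed $2\pi/a_i$ and preserves $\partial E$, so each circle $\ell_i\cap\partial E$ is a closed Reeb orbit of $\lambda|_{\partial E}$ of period $a_i$ (recall the Reeb field of $\lambda|_{\partial E}$ agrees with the Hamiltonian field of $H_E$ on $\partial E$). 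A classical computation of the closed orbits of an ellipsoid (first for rationally independent $a_i$, then by continuity) shows these realize the minimal period, so $c(E) = a_1$.

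Next, in the real coordinates $z_j = x_j + iy_j$ the domain $E$ is the Euclidean ellipsoid $\{\sum_i (x_i^2 + y_i^2)/(a_i/\pi)\le 1\}$, whose $2n$ semi-axes are $\sqrt{a_i/\pi}$, each occurring twice. Since the Euclidean unit ball in $\R^{2n}$ has volume $\pi^n/n!$, this gives $\on{vol}_g(E) = \tfrac{\pi^n}{n!}\prod_i (a_i/\pi) = \tfrac1{n!}\prod_i a_i$. For the Laplacian integral, a direct differentiation gives $\Delta H_E = \sum_i (\partial_{x_i}^2 + \partial_{y_i}^2) H_E = \sum_i 4\pi/a_i$, which is a \emph{constant}; hence $S(H_E) = \int_E \Delta H_E\cdot\mu_g = \Delta H_E\cdot\on{vol}_g(E) = \tfrac{4\pi}{n!}\big(\sum_i 1/a_i\big)\prod_i a_i$, as claimed.

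Finally, combining the three formulas, $c(E)\cdot S(H_E) = 4\pi a_1\big(\sum_i 1/a_i\big)\on{vol}_g(E) = 4\pi\big(\sum_i a_1/a_i\big)\on{vol}_g(E)$. Since $a_1\le a_i$ for every $i$, each term $a_1/a_i$ lies in $(0,1]$ while the $i=1$ term equals $1$; thus $1\le\sum_i a_1/a_i\le n$, which yields $4\pi\,\on{vol}_g(E)\le c(E)\cdot S(H_E)\le 4\pi n\,\on{vol}_g(E)$. All the computations here are routine; the one step deserving an explicit justification or citation is the claim that the shortest closed Reeb orbit on $\partial E$ has period exactly $a_1$, i.e.\ that no shorter orbit exists.
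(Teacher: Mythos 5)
Your proposal is correct and follows essentially the same route as the paper: compute $\on{vol}_g(E)$ by a linear change of variables, observe $\Delta H_E = 4\pi\sum_i a_i^{-1}$ is constant so $S(H_E) = \Delta H_E\cdot\on{vol}_g(E)$, and bound $\sum_i a_1/a_i$ between $1$ and $n$. The one step you flag as needing justification, $c(E)=a_1$, is handled in the paper exactly by citation to Gutt--Hutchings, so your instinct to cite rather than reprove it matches the paper.
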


\begin{proof} The formula for $c(E)$ is standard (cf. \cite{gh2018}). To derive the volume, note that $E = A^{-1/2}(B^{2n})$ where $A$ is the diagonal Hermitian matrix with $H_E(z) = \langle z,Az\rangle$. Therefore
\[\on{vol}(E) = \on{det}(A)^{-1/2} \cdot \on{vol}(B^{2n}) = \Big(\prod_i \big(\frac{\pi}{a_i}\big)^2\Big)^{-1/2} \cdot \frac{\pi^n}{n!} = \frac{1}{n!} \cdot \prod_i a_i\]
To compute $S(H_E)$, we note that $\Delta H_E$ is constant and given by
\[
\Delta H_E = 4\pi \cdot \sum_i \frac{1}{a_i} \qquad\text{so that}\qquad S(H_E) = \Delta H_E \cdot \on{vol}_g(E)
\]
Finally, to prove the claimed inequality it suffices to note that $4\pi \le \Delta H_E \cdot c(E) \le 4\pi n$. Indeed, by the formulas already derived, we have
\[\Delta H_E \cdot c(E) = 4\pi \cdot \sum_i \frac{a_1}{a_i} \qquad\text{and}\qquad 1 \le \sum_i \frac{a_1}{a_i} \le n \qedhere\]
\end{proof}

\subsection{Proof Of Main Estimate} \label{subsec:proof_of_main_inequality} We are now ready to prove Theorem \ref{thm:main_inequality}. 

\begin{proof} Let $X \subset \C^n$ be a convex, star-shaped domain. By Lemmas \ref{lem:John_ellipsoid} and \ref{lem:ruelle_liouville_domain}, we may assume without loss of generality that there is a standard ellipsoid $E$ such that
\begin{equation}
\label{eq:proof_of_main_estimate_a}
\frac{1}{2n}\cdot E \subset X \subset E
\end{equation}
Note that since the systole period is a symplectic capacity on convex domains (cf. \cite{gh2018}), we have
\[c(X) \le c(E) \qquad \text{and}\qquad \on{vol}(E) \le (2n)^{2n} \cdot \on{vol}(X) \]
Now let $T\Phi:\R \times X \to X$ denote the symplectic cocycle induced by the Hamiltonian flow  $\Phi$ of $H_X$. This cocycle is generated by the Hessian, i.e.
\[
\frac{d}{dt}\big(T\Phi(t,x)) = \Omega \circ \nabla^2 H_X(\Phi(t,x)) \circ T\Phi(t,x)
\]
where $\Omega$ is the matrix representing multiplication by $i$. Convexity of $X$ implies that $\nabla^2 H_X$ is positive semi-definite. Thus we may apply the trace estimate, Lemma \ref{lem:trace_bound_Ru}, and conclude that
\[
\on{Ru}(X) \le \frac{8n^2}{\pi} \cdot \int_X \on{tr}\big(\nabla^2 H_X\big) \cdot \omega^n = \frac{8n^2}{\pi} \cdot n! \cdot \int_X \Delta H_X \cdot \mu_g = \frac{8n^2}{\pi} \cdot n! \cdot S(H_X)
\]
The inclusions \eqref{eq:proof_of_main_estimate_a} imply that
\[H_E\leq H_X \leq H_{\frac{1}{2n}\cdot E} = (2n)^2\cdot H_E\]

Now we apply the sandwiching estimate for $S$ derived in Proposition \ref{prop:sandwich_estimate}. Indeed, $M = \C^n \setminus 0$ has a cylindrical vector field $Z$ (the standard Liouville vector field) and the standard metric is $Z$-linear of slope $1$. Moreover, $Z$ satisfies
\[
\nabla Z = \frac{1}{2} \cdot \on{Id}
\]
We may therefore apply Proposition \ref{prop:sandwich_estimate} to find that
\[
S(H_X) \le C((2n)^2,2n) \cdot S(H_E) \qquad\text{where}\qquad C((2n)^2,2n) = \exp(8n^4)
\]
Finally, combing the estimates above and applying Lemma \ref{lem:ellipsoid_quantities}, we find that
\[
c(X) \cdot \on{Ru}(X) \le \frac{8n^2 \cdot n!}{\pi} \cdot \on{exp}(8n^4) \cdot c(E) \cdot S(H_E) \]
\[\le 32n^3 \cdot n! \cdot \on{exp}(8n^4) \cdot \on{vol}(E) \le 32n^3 \cdot (2n)^{2n} \cdot \on{exp}(8n^4) \cdot \on{vol}(X)
\]
This proves the inequality for the constant $C(n)$ given by
\[C(n) = 2^{2n+5} \cdot n^{2n+3} \cdot \on{exp}(8n^4) \qedhere\]
\end{proof}

\section{Ruelle Invariant Of Toric Domains} \label{sec:toric_formula} In this section, we compute the Ruelle invariant of toric domains in any dimension and explain the higher-dimensional examples of non-convex, dynamically convex domains. 

\subsection{Star-Shaped Toric Domains} \label{subsec:toric_domains} We begin by recalling the basics of toric domains.

\begin{remark} We recommend Gutt-Hutchings \cite{gh2018} for a detailed treatment. Also see \cite{ghr2020}. \end{remark} 

\noindent Consider $\C^n$ with the Hamiltonian action by $T^n \simeq (\R/\Z)^n$ induced by the $\on{GL}(n,\C)$-action.
\[
T^n \times \C^n \to \C^n \qquad \text{given by}\qquad \theta \cdot z = (e^{2\pi i\theta_1}z_1,\dots,e^{2\pi i \theta_n}z_n)
\]
This \emph{standard torus action} is generated by the following moment map. 
\[\mu:\C^n \to [0,\infty)^n \subset \R^n \qquad \text{given by}\qquad \mu = (\mu_1,\dots,\mu_n) \quad\text{and}\quad \mu_i(z) = \pi \cdot |z_i|^2\]
One can extend $\mu$ to a symplectomorphism on the free region of the action of the form
\[
(\mu,\theta):(\C^\times)^n \simeq (0,\infty)^n \times (\R/\Z)^n \simeq (0,\infty)^n \times T^n
\]
Here $(0,\infty)^n \times T^n$ has symplectic form $\sum_i d\mu_i \wedge d\theta_i$. 

\begin{definition} The \emph{toric domain} $X \subset \C^n$ with \emph{moment region} $\Omega \subset [0,\infty)^n$ is the $T^n$-invariant domain in $\C^n$ given by $X = \mu^{-1}(\Omega)$. It is conventional to use the notation $X_\Omega$ for $X$.
\end{definition} 

We are interested in toric domains that are also star-shaped. In the coordinates $(\mu,\theta)$, the Liouville vector field $Z$ and the Liouville form $\lambda$ on $\C^n$ are given by
\[Z = \sum_i  \mu_i\partial_{\mu_i} \qquad\text{and}\qquad \lambda = \sum_i \mu_i d\theta_i\]
Thus $X_\Omega$ is a star-shaped domain if and only if $\Omega$ is star-shaped with respect to $0$ and
\[\partial_+\Omega := \mu(\partial X_\Omega) \subset \partial\Omega \qquad\text{is transverse to }\sum_i \partial_i \mu_i \partial_{\mu_i}\]
The canonical Hamiltonian of a star-shaped toric domain $X$, its corresponding vector field and its Hamiltonian all possess nice toric formulas. We record these in the following lemma.

\begin{lemma} \label{lem:toric_formulas} Let $X$ be a star-shaped toric domain with moment region $\Omega$. Then
\begin{itemize}
    \item[(a)] The canonical Hamiltonian $H_X$ is given by $H_X = f_\Omega \circ \mu$ where
    \[f_\Omega:[0,\infty)^n \to \R \qquad\text{satisfies} \qquad \sum_i \mu_i \cdot \partial_if_\Omega = f_\Omega \qquad\text{and}\qquad f_\Omega^{-1}(1) = \partial_+\Omega\]
    \item[(b)] The Hamiltonian vector field $V_X$ of $H_X$ is given by
    \[V_X = \sum_i \partial_i f_\Omega \circ \mu \cdot \partial_{\theta_i}\]
    \item[(c)] The Hamiltonian flow of $H_X$ is given (in standard coordinates on $\C^n$) as
\[
\Phi(t,z) = U(t,\mu)z
\]
Here $U(t,\mu)$ is a diagonal, unitary matrix dependent only on $t$ and $\mu(z)$, with diagonal entries 
\[
u_j(t,\mu) = \exp\big(2\pi i t \cdot \partial_j f_\Omega(\mu)\big) \qquad\text{for}\qquad i = 1,\dots,n
\]
\item[(d)] The differential of the Hamiltonian flow of $H_X$ is given (in standard coordinates on $\C^n$) as
\[
T\Phi:\R \times \C^n \to \on{Sp}(2n) \qquad\text{where}\qquad T\Phi(t,z) = U(t,\mu)Q(t,z)
\]
Here $Q(t,z) = \on{Id} + t \cdot M(z)$ where $M(z)$ is a nilpotent matrix.
\end{itemize}
\end{lemma}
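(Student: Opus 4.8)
The plan is to verify the four items in order, reducing each to a short computation in the toric coordinates $(\mu,\theta)$ on $(\C^\times)^n \simeq (0,\infty)^n\times T^n$, in which $\omega = \sum_i d\mu_i\wedge d\theta_i$, the Liouville field is $Z = \sum_i\mu_i\partial_{\mu_i}$, and $\lambda = \sum_i\mu_i\,d\theta_i$; at the end one passes back to standard coordinates $z_j = \sqrt{\mu_j/\pi}\,e^{2\pi i\theta_j}$ to express the flow. For (a), recall that $H_X$ is characterized by $ZH_X = H_X$ and $H_X^{-1}(1) = \partial X$. The Liouville flow acts in these coordinates as the scaling $\mu\mapsto e^t\mu$, and both it and $X = \mu^{-1}(\Omega)$ are $T^n$-invariant, so $H_X$ is a function of $\mu$ alone, say $H_X = g\circ\mu$. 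The relation $ZH_X = H_X$ becomes $\sum_i\mu_i\partial_i g = g$ and $H_X^{-1}(1) = \partial X$ becomes $g^{-1}(1) = \partial_+\Omega$; these are exactly the defining properties of $f_\Omega$, so $g = f_\Omega$, with smoothness on all of $[0,\infty)^n$ inherited from the smoothness of $\partial_+\Omega$.

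For (b), write $V_X = \sum_i(a_i\partial_{\mu_i} + b_i\partial_{\theta_i})$. Since $H_X = f_\Omega\circ\mu$ gives $dH_X = \sum_i(\partial_if_\Omega\circ\mu)\,d\mu_i$, contracting $\omega$ and solving the defining equation $\iota_{V_X}\omega = -dH_X$ of the Hamiltonian vector field forces $a_i = 0$ and $b_i = \partial_if_\Omega\circ\mu$, which is the stated formula; as a consistency check, $\lambda(V_X) = \sum_i\mu_i\,\partial_if_\Omega\circ\mu = H_X$ equals $1$ on $\partial X$, as it must for $V_X$ to restrict to the Reeb field of $\lambda|_{\partial X}$. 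For (c), note that $V_X$ has no $\partial_{\mu_i}$-component, so $\mu$ is conserved along its flow, hence each coefficient $\partial_if_\Omega(\mu)$ is constant on a trajectory and $\theta_i(t) = \theta_i(0) + t\,\partial_if_\Omega(\mu)$. In standard coordinates, where $\partial_{\theta_j}$ corresponds to $z\mapsto 2\pi i z_j$ in the $j$-th slot, this is exactly $\Phi(t,z) = U(t,\mu(z))z$ with $U$ diagonal and unitary, $u_j(t,\mu) = \exp(2\pi i t\,\partial_jf_\Omega(\mu))$; equivalently one differentiates $z\mapsto U(t,\mu(z))z$ in $t$ and uses $\mu\circ\Phi = \mu$, which also dispenses with having to stay on $(\C^\times)^n$ (and makes completeness of the flow transparent, the level sets of $H_X$ being compact).

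For (d), differentiate the map $z\mapsto U(t,\mu(z))z$ in $z$. The product rule gives $T\Phi(t,z)v = U(t,\mu)v + \big(\sum_k\partial_{\mu_k}u_j(t,\mu)\cdot D_z\mu_k(v)\cdot z_j\big)_j$ with $D_z\mu_k(v) = 2\pi\,\on{Re}(\bar z_k v_k)$, and since $u_j^{-1}\partial_{\mu_k}u_j = 2\pi i t\,\partial_j\partial_kf_\Omega(\mu)$ we obtain $T\Phi(t,z) = U(t,\mu)\,(\on{Id} + t\,M(z))$, where $M(z)$ is the $\R$-linear map $v\mapsto\big(4\pi^2 i\,z_j\sum_k\partial_j\partial_kf_\Omega(\mu)\,\on{Re}(\bar z_k v_k)\big)_j$, manifestly independent of $t$. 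That $M(z)$ is nilpotent, in fact $M(z)^2 = 0$, is the one point requiring an observation: in $(M(z)v)_k = z_k\sum_l 4\pi^2 i\,\partial_k\partial_lf_\Omega(\mu)\,\on{Re}(\bar z_l v_l)$ the factor $\bar z_k(M(z)v)_k = |z_k|^2\sum_l 4\pi^2 i\,\partial_k\partial_lf_\Omega(\mu)\,\on{Re}(\bar z_l v_l)$ is purely imaginary because $f_\Omega$ is real-valued, so $\on{Re}(\bar z_k(M(z)v)_k) = 0$ and hence $M(z)(M(z)v) = 0$. (The factor $Q(t,z) = \on{Id} + tM(z)$ is then automatically symplectic since $T\Phi$ is the differential of a Hamiltonian flow and $U$ is unitary, but this is not needed.) The main — and essentially only — obstacle is this last computation in part (d); everything else is bookkeeping with the toric coordinates and the defining properties of $H_X$.
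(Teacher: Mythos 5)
Your proof is correct, and for parts (a)--(c) it follows the same route as the paper: verify that $f_\Omega\circ\mu$ satisfies the two properties characterizing $H_X$, read off $V_X$ from $\iota_{V_X}\omega=-dH_X$ in the coordinates $(\mu,\theta)$, and integrate using that $\mu$ is conserved (your remark that one can check the flow formula directly in standard coordinates, so as not to worry about the non-free locus, is a nice touch the paper leaves implicit). The one place you genuinely diverge is the nilpotency of $M(z)$ in part (d). The paper does not use the explicit formula for $M(z)$ at all: it observes that $Q(t,z)=U(t,\mu)^{-1}T\Phi(t,z)=\on{Id}+t\cdot M(z)$ is symplectic for every $t$ with $M(z)$ independent of $t$, and invokes the abstract Lemma \ref{lem:symplectic_nilpotent}, whose proof expands $(\on{Id}+tM)\Omega(\on{Id}+tM)^T=\Omega$ in powers of $t$ to get $M\Omega M^T=0$ and hence $M^2=0$. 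You instead compute $M(z)$ explicitly and show $\on{Re}\bigl(\bar z_k(M(z)v)_k\bigr)=0$ because $f_\Omega$ is real-valued, so $M(z)^2=0$ directly. Both arguments are complete and give the same conclusion (including, if one wants it, that all eigenvalues of $Q(t,z)$ are $1$, since $M(z)^2=0$); the paper's version is more robust in that it needs only symplecticity of $T\Phi$ and affine dependence on $t$, not the specific toric form of $M$, while yours is more elementary and self-contained, at the cost of the coordinate computation you correctly identify as the only real work in the lemma.
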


\begin{proof} To see (a), note that the toric formula for $Z$ implies that $F = f_\Omega \circ \mu$ satisfies $ZF = F$ and $F^{-1}(1) = \mu^{-1}(\partial_+\Omega) = \partial X$. Thus $H_X = F$ since these properties uniquely determine $H_X$. (b) and (c) are immediate from (a). To deduce (d), differentiate (c) to acquire the formula
\[
T\Phi(t,z) = U(t,\mu(z))(\on{Id} + t \cdot M(z)) \qquad\text{where}\qquad M(z)v = 2\pi i \cdot D(z) \circ \nabla^2f_\Omega(\mu(z)) \circ d\mu(v)
\]
Here $D(z)$ is the diagonal matrix with entries $z_i$. Note that $Q(t,z) = \on{Id} + t \cdot M(z)$ is symplectic for every $t$ and fixed $z$. By Lemma \ref{lem:symplectic_nilpotent}, $M(z)$ is nilpotent and $Q(t,z)$ has all $1$ eigenvalues.
\end{proof}

\begin{lemma} \label{lem:symplectic_nilpotent} Let $Q:\R \to \on{Sp}(2n)$ be a path of symplectic matrices of the form
\[Q(t) = \on{Id} + t M \qquad\text{where}\qquad M \text{ is $t$ independent}\]
Then $M$ is nilpotent and the eigenvalues of $Q(t)$ are $1$ for all $t \in \R$.
\end{lemma}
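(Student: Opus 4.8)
\emph{Proof proposal.} The plan is to turn the hypothesis ``$Q(t)$ is symplectic for every $t$'' into algebraic constraints on $M$ by expanding the defining identity as a polynomial in $t$. Using the convention $\Sp(2n) = \{A : A\Omega A^T = \Omega\}$, the condition $Q(t)\Omega Q(t)^T = \Omega$ with $Q(t) = \on{Id} + tM$ reads
\[
\Omega + t\,(M\Omega + \Omega M^T) + t^2\, M\Omega M^T = \Omega \qquad\text{for all } t \in \R .
\]
A matrix-valued polynomial in $t$ vanishing identically has all coefficients zero, so this forces simultaneously $M\Omega + \Omega M^T = 0$ (i.e.\ $M \in \Spa(2n)$) and $M\Omega M^T = 0$.

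Next I would combine the two relations. From $M\Omega + \Omega M^T = 0$ we get $\Omega M^T = -M\Omega$, and substituting into $M\Omega M^T = 0$ gives $-M^2\Omega = 0$; since $\Omega$ is invertible, this yields $M^2 = 0$. In particular $M$ is nilpotent, which is the first assertion.

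Finally, for the eigenvalue claim: if $Q(t)v = \lambda v$ with $v \neq 0$, then $tMv = (\lambda-1)v$, so $\lambda - 1$ is an eigenvalue of the nilpotent operator $tM$ and hence $\lambda = 1$; thus all eigenvalues of $Q(t)$ equal $1$ for every $t$ (equivalently, $\on{Id} + tM$ is unipotent because $tM$ is nilpotent). I do not expect any real obstacle here; the only points requiring care are the sign/transpose conventions for $\Sp(2n)$ and $\Spa(2n)$ and the bookkeeping in the expansion, after which everything is routine linear algebra.
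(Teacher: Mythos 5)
Your proof is correct and follows essentially the same route as the paper's: expand $Q(t)\Omega Q(t)^T=\Omega$ in powers of $t$ to obtain $M\Omega+\Omega M^T=0$ and $M\Omega M^T=0$, combine them to get $M^2=0$, and conclude unipotence of $\on{Id}+tM$. No gaps.
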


\begin{proof} Let $J$ be the matrix for the standard symplectic form on $\R^{2n}$. Then for all $t$, we have
\[(\on{Id} + tM)J(\on{Id} + tM)^T = J \qquad\text{or equivalently}\qquad MJ + JM^T = MJM^T = 0\]
Combining the last two formulas, we find that $M^2J = -MJM^T = 0$ so that $M^2 = 0$. Thus $M$ is nilpotent and has eigenvalues $0$. Hence $1$ is the only eigenvalue of $\on{Id} + tM$.
\end{proof}

We now calculate the Ruelle invariant of a star-shaped toric domain in any dimension. 

\begin{remark} Our formula specializes to the formulas in \cite{h2019,dgz2021} in dimension four. However, our calculation differs from both of \cite{h2019,dgz2021} and utilizes the properties of the Ruelle invariant in \S \ref{sec:Ruelle_density}. \end{remark}

\begin{prop}[Toric Ruelle] \label{prop:toric_ruelle} The Ruelle density $\on{ru}(X_\Omega)$ of a star-shaped toric domain $X_\Omega$ is given by
\[
\on{ru}(X_\Omega)(z) = \sum_i \partial_i f_\Omega(\mu(z)) 
\]
In particular, the Ruelle invariant of $X_\Omega$ is given by
\[
\on{Ru}(X_\Omega) = \sum_i \int_\Omega \partial_i f_\Omega \cdot \on{dvol}_{\R^n}
\]
\end{prop}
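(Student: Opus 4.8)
The plan is to apply the trivial-bundle formula for the Ruelle density from Proposition \ref{prop:Ruelle_invariant}(d), using the explicit description of the linearized Hamiltonian flow of a star-shaped toric domain provided by Lemma \ref{lem:toric_formulas}(d). Since the contact boundary $\partial X_\Omega$ has $H^1 = 0$ and $c_1 = 0$ (it is a sphere, or more precisely the relevant cocycle on $\C^n$ restricted to $X_\Omega$ carries the tautological trivialization $\tau_{\on{std}}$), Lemma \ref{lem:ruelle_liouville_domain} lets us compute $\on{Ru}(X_\Omega)$ as the Ruelle invariant of the cocycle $T\Phi$ of the canonical Hamiltonian $H_X = f_\Omega \circ \mu$ on $X_\Omega$, with respect to $\tau_{\on{std}}$ and the measure $\omega^n$.

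First I would fix a point $z$ and compute the rotation density $\on{ru}(X_\Omega)(z) = \lim_{T\to\infty} \frac{1}{T} q(\widetilde{\Phi}_T)$ for a convenient rotation quasimorphism $q$. By Lemma \ref{lem:toric_formulas}(d), $T\Phi(t,z) = U(t,\mu)Q(t,z)$ where $U(t,\mu)$ is the diagonal unitary matrix with entries $\exp(2\pi i t\,\partial_j f_\Omega(\mu))$ and $Q(t,z) = \on{Id} + tM(z)$ has all eigenvalues equal to $1$ by Lemma \ref{lem:symplectic_nilpotent}. The key point is that the nilpotent, unipotent factor $Q(t,z)$ contributes nothing to the asymptotic rotation: using the determinant quasimorphism $r$, the unitary part of $U(t,\mu)Q(t,z)$ and the unitary part of $U(t,\mu)$ alone differ by a bounded amount, because $Q(t,z)$ stays within a region where the rotation quasimorphism is controlled — more precisely, $r$ is a quasimorphism, so $|r(\widetilde{U(t,\mu)Q(t,z)}) - r(\widetilde{U(t,\mu)}) - r(\widetilde{Q(t,z)})|$ is bounded by a universal constant, and $r(\widetilde{Q(t,z)})$ is bounded since $Q(t,z)$ is a path of unipotent matrices whose rotation grows sublinearly (indeed the path $t \mapsto Q(t,z)$ lifts to a bounded path in $\widetilde{\on{Sp}}(2n)$ because unipotent matrices have trivial rotation). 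Meanwhile $r(\widetilde{U(t,\mu)})$ is exactly $t \cdot \sum_i \partial_i f_\Omega(\mu)$ since $U$ is a straight-line path in the maximal torus and $r$ restricts to the lift of $\det_\C$ on $\U(n)$. Dividing by $T$ and letting $T\to\infty$ yields $\on{ru}(X_\Omega)(z) = \sum_i \partial_i f_\Omega(\mu(z))$.

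Then I would integrate. We have
\[
\on{Ru}(X_\Omega) = \int_{X_\Omega} \on{ru}(X_\Omega) \cdot \omega^n = \int_{X_\Omega} \sum_i \partial_i f_\Omega(\mu(z)) \cdot \omega^n,
\]
and pushing forward under the moment map $\mu$, whose fibers are the tori $T^n$ and which carries $\omega^n$ (up to the standard combinatorial constant) to Lebesgue measure $\on{dvol}_{\R^n}$ on $\Omega$, gives $\on{Ru}(X_\Omega) = \sum_i \int_\Omega \partial_i f_\Omega \cdot \on{dvol}_{\R^n}$. One must be mildly careful with the normalization constant in $\mu_*\omega^n = c_n\,\on{dvol}$; with $\mu_i = \pi|z_i|^2$ the standard computation gives $\omega^n/n! = d\mu_1 \wedge d\theta_1 \wedge \cdots$, so the measure is set up so the constant is $1$ after accounting for the $1/n!$ in $\omega^n = \alpha\wedge d\alpha^{n-1}$ conventions — this bookkeeping should be stated explicitly.

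The main obstacle I anticipate is making rigorous the claim that the unipotent factor $Q(t,z) = \on{Id} + tM(z)$ contributes boundedly to the rotation quasimorphism uniformly in $z$. One cannot simply invoke the quasimorphism inequality to split off $Q$ and then bound $r(\widetilde{Q(t,z)})$, because the quasimorphism constant is uniform but $r(\widetilde{Q(t,z)})$ must itself be shown bounded uniformly in both $t$ and $z$ (the matrices $M(z)$ vary over the compact set $X_\Omega$, so their operator norms are uniformly bounded, but $t$ ranges to $\infty$). The resolution is that a path of the form $t\mapsto \on{Id} + tM$ with $M$ nilpotent lies in the set of matrices with only eigenvalue $1$, hence all its iterates stay in a "parabolic" region on which any rotation quasimorphism is bounded — concretely, $\widetilde{Q(t,z)}$ is homotopic rel endpoints-behavior to a path that never rotates a Lagrangian plane around, so $r$ stays bounded. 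An alternative, cleaner route is to use the eigenvalue quasimorphism $e$ of Example \ref{ex:eigenvalue_quasimorphism}: since $Q(t,z)$ has no eigenvalues in $(U(1) \cup (-\infty,0))\setminus\{1\}$, one checks $\underline{e}(U(t,\mu)Q(t,z)) = \underline{e}(U(t,\mu))$ directly from the definition, so $e(\widetilde{T\Phi(t,z)}) = e(\widetilde{U(t,\mu)})$ exactly, and the latter is $t\sum_i \partial_i f_\Omega(\mu)$ — this avoids the quasimorphism error entirely. I would present the argument via $e$ for this reason, and this is the step to get right.
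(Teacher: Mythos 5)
Your overall strategy is the same as the paper's: use the trivial-bundle formula, factor $T\Phi(t,z) = U(t,\mu)Q(t,z)$ via Lemma \ref{lem:toric_formulas}(d), split off the unipotent factor using the quasimorphism property of $r$, compute $r(\widetilde{U}(T,\mu))/T \to \sum_i \partial_i f_\Omega(\mu)$ directly since $U$ is diagonal unitary, and kill the $Q$-term using the equivalence of $r$ with the eigenvalue quasimorphism $e$ together with Lemma \ref{lem:symplectic_nilpotent}. That first route you describe is exactly the paper's proof and is correct: the quasimorphism error is bounded, so it vanishes after dividing by $T$, and $|r(\widetilde{Q}(T,z))| \le |e(\widetilde{Q}(T,z))| + O(1) = O(1)$ because $Q(T,z)$ has only the eigenvalue $1$, so $\underline{e}(Q(T,z)) = 1$ and $e(\widetilde{Q}(T,z)) = 0$ for all $T$.

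However, the ``cleaner alternative'' you say you would actually write up --- checking $\underline{e}\bigl(U(t,\mu)Q(t,z)\bigr) = \underline{e}\bigl(U(t,\mu)\bigr)$ ``directly from the definition'' so that $e(\widetilde{T\Phi}(t,z)) = e(\widetilde{U}(t,\mu))$ exactly --- does not work as stated. The eigenvalue quasimorphism is defined through the spectrum of the matrix it is evaluated on, and the spectrum of a product $UQ$ of non-commuting matrices is not determined by the spectra of $U$ and $Q$ separately; a diagonal unitary times a unipotent can have eigenvalues essentially anywhere in $\C^\times$ (already in $\Sp(2)$, a rotation composed with a shear changes the trace and hence the eigenvalue type). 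So there is no way to read off $\underline{e}(UQ)$ from $\underline{e}(U)$ and $\underline{e}(Q)$, and this version does not ``avoid the quasimorphism error'' --- you still need the quasimorphism inequality (for $e$ or for $r$) to separate the two factors, after which the bounded error is harmless only because you divide by $T$. Stick with your first formulation. Your remark about the measure normalization $\omega^n/n! = d\mu_1\wedge d\theta_1\wedge\cdots\wedge d\mu_n\wedge d\theta_n$ in the integration step is correct and worth stating, as the paper glosses over it.
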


\begin{proof} We consider the lift to the universal cover of the differential $T\Phi$ of the Hamiltonian flow of the canonical Hamiltonian. By Lemma \ref{lem:toric_formulas} , we may write
\[
\widetilde{T\Phi}(t,z) = \widetilde{U}(t,\mu(z))\widetilde{Q}(t,z)
\]
By Proposition \ref{prop:Ruelle_invariant}(c) and the quasi-morphsim property for $r$, we can calculate the Ruelle density as the limit
\[
\on{ru}(X_\Omega)(z) = \lim_{T \to \infty} \frac{r(\widetilde{T\Phi}(T,z))}{T} = \lim_{T \to \infty} \frac{r(\widetilde{U}(T,\mu(z)))}{T} + \lim_{T \to \infty} \frac{r(\widetilde{Q}(T,z))}{T}
\]
Since $\widetilde{U}$ is already unitary and diagonal, so we see that
\[
\frac{r(\tilde{U}(t,z))}{T} = \frac{1}{2\pi i T} \cdot \int_0^T \on{tr}_\C(\frac{dU}{dt}(t,\mu(z)) \cdot \widetilde{U}^{-1}(t,\mu(z))) dt = \frac{1}{T} \int_0^T \sum_i \partial_i f_\Omega(\mu(z)) dt = \sum_i \partial_i f_\Omega(\mu(z))
\]
It now suffices to show that $r(\widetilde{Q}(T,z))/T \to 0$ as $T \to \infty$. Since the determinant quasimorphism $r$ and eigenvalue quasimorphism $e$ are equivalent (see \S \ref{subsec:rotation_quasimorphisms}), we have
\[
\lim_{T \to \infty} \frac{r(\widetilde{Q}(T,z))}{T} = \lim_{T \to \infty} \frac{e(\widetilde{Q}(T,z))}{T}
\]
Moreover, by Lemma \ref{lem:symplectic_nilpotent}, $Q(t)$ has all its eigenvalues equal to $1$ for all $t$. In particular, it has no eigenvalues on $(U(1) \cup (-\infty,0)) \setminus \{1\}$. Thus we conclude that 
\[\underline{e}(Q(t)) = 1 \in U(1) \qquad\text{and}\qquad e(\widetilde{Q}(t)) = 0 \qquad\text{for all }t \in \R \qedhere\]\end{proof}

\subsection{Monotone Toric Domains} \label{subsec:monotone_domains} In \cite{ghr2020}, Gutt-Hutchings-Ramos introduced the notion of a (strictly) monotone toric domain.

\begin{definition} \label{def:strictly_monotone} A star-shaped, toric domain $X_\Omega$ is \emph{strictly monotone} if either of the following equivalent conditions are satisfied.
\begin{itemize}
\item[(a)] the unit normal vector-field $\nu_\Omega:\partial_+\Omega \to \R^n$ pointing outward from $\Omega$ satisfies
\[\nu_\Omega(x) \in (0,\infty)^n \text{ for each }x \in \partial_+\Omega\]
\item[(b)] the gradient of the canonical function $f_\Omega:\Omega \to \R$ satisfies
\[\nabla f_\Omega(x) \in (0,\infty)^n \text{ for each }x \in \Omega \setminus 0\]
\end{itemize}
\end{definition}

In dimension four, a star-shaped toric domain is monotone if and only if it is dynamically convex \cite[Prop. 1.8]{ghr2020}. We generalize this result to higher dimensions, in one direction.

\begin{prop} \label{prop:strictly_monotone_implies_dynamically_convex} Let $X$ be a strictly monotone toric domain in $\C^n$. Then $X$ is dynamically convex. 
\end{prop}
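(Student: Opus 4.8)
The plan is to compute the lower semi-continuous Conley--Zehnder index of every contractible closed Reeb orbit on $\partial X_\Omega$ directly, using the explicit toric description of the Hamiltonian flow from Lemma \ref{lem:toric_formulas}, and to show that strict monotonicity forces this index to be at least $n+1$. Recall from Corollary \ref{cor:dyn_cvx_Hamiltonian} that it is equivalent to show $\LCZ(X,H_X;\gamma) \ge n$ for every closed orbit $\gamma$ of the canonical Hamiltonian $H_X$. The key structural fact is that, in standard coordinates, $T\Phi(t,z) = U(t,\mu)Q(t,z)$ where $U(t,\mu)$ is diagonal unitary with entries $\exp(2\pi i t\,\partial_j f_\Omega(\mu))$ and $Q(t,z) = \on{Id} + tM(z)$ with $M(z)$ nilpotent (Lemma \ref{lem:toric_formulas}(c),(d) and Lemma \ref{lem:symplectic_nilpotent}).

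First I would fix a closed Reeb orbit $\gamma$ on $\partial X_\Omega$ with period $T$; such an orbit lies over a point $x\in\partial_+\Omega$ for which the vector $\partial f_\Omega(x) = (\partial_1 f_\Omega(x),\dots,\partial_n f_\Omega(x))$ has the property that $T\cdot\partial f_\Omega(x)$ is an integer vector (this is the periodicity condition coming from the diagonal rotation $U$; the nilpotent part $Q$ never contributes closure but does affect the orbit's linearized return map). Strict monotonicity (Definition \ref{def:strictly_monotone}(b)) tells us $\partial_j f_\Omega(x) > 0$ for all $j$, hence each $T\partial_j f_\Omega(x) \ge 1$. Next I would trivialize $TX$ along a disk bounded by $\gamma$ using the natural toric (constant) trivialization of $T\C^n$ and analyze the resulting path $\Psi_\tau:[0,T]\to\Sp(2n)$, which by Lemma \ref{lem:toric_formulas} is $\Psi_\tau(t) = U(t,\mu)Q(t,z)$. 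I would split off the contribution of the coordinate whose corresponding $\partial_j f_\Omega(x)$ enters the orbit's direction (the Reeb direction, which is "used up" as in the Liouville-domain Lemma relating $\LCZ(X,H_X;\gamma)$ to $\LCZ(Y,\lambda|_Y;\gamma)$), and handle the remaining directions via the Direct Sum and Maslov properties of $\LCZ$ (Lemma \ref{lem:LCZ_index}(b),(c)) together with Lemma \ref{lem:LCZ_of_U1_element}, which computes $\LCZ(\widetilde u(\theta)) = 2\lceil\theta\rceil - 1$.

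The computation then runs as follows: the diagonal unitary factor $U$ contributes, in each complex coordinate $j$, a rotation by angle $\theta_j := T\partial_j f_\Omega(x) \ge 1$, so by Lemma \ref{lem:LCZ_of_U1_element} and the Direct Sum property its total $\LCZ$ contribution is $\sum_j (2\lceil\theta_j\rceil - 1) \ge \sum_j (2\cdot 1 - 1) = n$; one must argue the nilpotent factor $Q(t,z)=\on{Id}+tM$ only perturbs the degenerate directions and, being a shear with all eigenvalues $1$, contributes nonnegatively to the lower semi-continuous index (this is where I would invoke the lower bound $\LCZ(\widetilde\Phi)\ge 2\rho(\widetilde\Phi) - n$ from Lemma \ref{lem:LCZ_index}(d) applied to the full path, since $\rho(\widetilde{T\Phi}(T,z)) = \sum_j \theta_j + r(\widetilde Q)$ and $r(\widetilde Q) = 0$ by the eigenvalue-quasimorphism argument in the proof of Proposition \ref{prop:toric_ruelle}). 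This gives $\LCZ(\widetilde{\Psi}_\tau) \ge 2\sum_j\theta_j - n \ge 2n - n = n$, which after accounting for the shift between the Hamiltonian and Reeb indices yields $\LCZ(\partial X_\Omega,\lambda;\gamma)\ge n+1$, i.e.\ dynamical convexity.

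The main obstacle I anticipate is the careful bookkeeping around degenerate orbits and the nilpotent factor: when some $\theta_j$ is an integer the corresponding $U$-factor is a degenerate symplectic path, $T\Phi(T,z)$ fails to be in $\Sp_\star(2n)$, and one must use the lower-semicontinuous extension correctly rather than the naive $\CZ$. The cleanest route is probably to bypass the case analysis entirely by applying the quasimorphism lower bound Lemma \ref{lem:LCZ_index}(d) directly to $\widetilde{\Psi}_\tau$, computing $\rho(\widetilde{\Psi}_\tau) = \sum_j T\partial_j f_\Omega(x)$ from the homogeneity of $\rho$ and the fact that $\rho$ restricts to the determinant on $\widetilde{\U}(n)$ (Theorem \ref{thm:quasi_homo_twSp}) together with $r(\widetilde Q)=0$; strict monotonicity then gives each summand $\ge 1$, hence $\rho(\widetilde{\Psi}_\tau) \ge n$ and $\LCZ \ge 2n - n = n$. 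The remaining subtlety is simply making sure the trivialization and the passage from the Hamiltonian index on $X$ to the Reeb index on $\partial X$ are done consistently, which is exactly the content of the Liouville-domain index Lemma already proved in the excerpt.
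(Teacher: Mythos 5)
Your overall strategy (reduce to $\LCZ(X,H_X;\gamma)\ge n$ via Corollary \ref{cor:dyn_cvx_Hamiltonian}, use the toric factorization $T\Phi=UQ$, and exploit strict monotonicity through Lemma \ref{lem:LCZ_of_U1_element}) is the right one, but there are two genuine gaps, and the "cleanest route" you settle on at the end is actually false. First, the closure condition on $\gamma$ only forces $T\cdot\partial_j f_\Omega(\mu(z))\in\Z_{>0}$ for those coordinates $j$ with $z_j\ne 0$. If the orbit lies in a proper coordinate subspace, the angles $\theta_j=T\,\partial_j f_\Omega(\mu(z))$ for the vanishing coordinates are merely positive and can be arbitrarily small. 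Hence $\rho(\widetilde{\Psi}_\tau)=\sum_j\theta_j$ need not be $\ge n$, and the bound $\LCZ\ge 2\rho-n$ applied to the whole path is useless: for the simple short orbit of the ellipsoid $E(1,100,\dots,100)$ one has $\sum_j\theta_j=1+(n-1)/100$, so $2\rho-n<n$ for $n\ge 2$, even though the orbit does satisfy $\LCZ=n$. Strict monotonicity must instead be used through $\LCZ(\widetilde{u}(\theta_j))=2\lceil\theta_j\rceil-1\ge 1$ for $\theta_j>0$, which is a strictly stronger input than $2\theta_j-1$.

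Second, your alternative computation treats the contributions of $U$ and $Q$ as if $\LCZ$ were additive under pointwise \emph{products} of symplectic paths; it is not. The Direct Sum property applies to $\oplus$, and the Maslov property only lets you peel off a factor that is a closed \emph{loop} in $\Sp$. This is exactly why the paper's proof splits $\C^n=\C^m\oplus\C^{n-m}$ according to which coordinates of $z$ vanish: on the second block $Q_2=\on{Id}$, so $U_2$ can be handled by Lemma \ref{lem:LCZ_of_U1_element} alone (giving $\ge n-m$); on the first block the integrality of $T\partial_jf_\Omega$ for $j\le m$ makes $U_1$ a genuine loop, so the Maslov property extracts $2\mu(\widetilde{U}_1)\ge 2m$ and leaves $\LCZ(\widetilde{Q}_1)\ge \rho(\widetilde Q_1)-m=-m$ by the quasimorphism bound. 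Without this block decomposition you have no way to separate $U$ from $Q$ (they need not commute, and $U$ is not a loop in the degenerate directions), so the claim that $Q$ "contributes nonnegatively" is unsupported. To repair the argument you should adopt the splitting by vanishing coordinates and verify $Q_2=\on{Id}$ there, which is where the structure of $M(z)=2\pi i\,D(z)\circ\nabla^2f_\Omega\circ d\mu$ is used.
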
 

\begin{proof} Let $\gamma$ be a closed orbit of the Hamiltonian flow $\Phi$ of $H_X$ starting at $z \in \partial X$ with period $T$. We may assume (without loss of generality) that
\[
\C^n = \C^m \oplus \C^{n-m} \qquad\text{where}\qquad z_j = 0 \text{ if and only if }j = m+1,\dots,n
\]
By Corollary \ref{cor:dyn_cvx_Hamiltonian}, it suffices to show that the lower semicontinuous Conley-Zehnder index of $\gamma$ as a periodic orbit of $H_X$ is bounded below by $n$. That is, $\LCZ(X,H_X;\gamma) \ge n$. 

\vspace{3pt}

We start by analyzing the differential $T\Phi$ along $\gamma$. By Lemma \ref{lem:toric_formulas}(c) we may write
\[\Phi(t,z) = U(t,\mu(z))z \qquad \text{and}\qquad T\Phi(t,z) = U(t,\mu(z))Q(t,z)\]
Here $U(t,\mu(z))$ is a diagonal matrix with unit complex entries
\[u_j(t,\mu(z)) = \on{exp}(2\pi i t \cdot \partial_j f_\Omega(\mu(z)))\]
Note that the flow $\Phi$ preserves the symplectic subspace $\C^m \oplus 0$. Thus the differential preserves $\C^m \oplus 0$ and $(\C^m \oplus 0)^\omega = 0 \oplus \C^{n-m}$ and there is a block decomposition
\[
T\Phi = T\Phi_1 \oplus T\Phi_2 \qquad\text{with respect to the splitting }\C^m \oplus \C^{n-m}\text{ along }\gamma
\]
Since $U$ also decomposes in block form, it follows that we have a block decomposition
\[U = U_1 \oplus U_2 \qquad \text{and}\qquad Q = Q_1 \oplus Q_2 \]
A direct analysis of $Q$ shows that $Q_2(t,z) = \on{Id}$. Indeed, $Q(t,z) = \on{Id} + t \cdot M(z)$ where
\[M(z)v = 2\pi i \cdot D(z) \circ \nabla^2f_\Omega(\mu(z)) \circ d\mu(v)\]
Here $D(z)$ is the diagonal matrix with entries $z_j$. Since $z_j = 0$ for $j = m+1,\dots,n$, we can conclude that the lower block of $M(z)$ vanishes, and so $Q_2(t,z) = \on{Id}$. Finally, note that the period $T$ of $\gamma$ must satisfy 
\[
T \cdot \partial_j f_\Omega(\mu(z)) \in \Z_+ \qquad\text{for each}\qquad j = 1,\dots,m
\]
Thus, the upper block $U_1$ of $U$ satisfies $U_1(T,\mu(z)) = \on{Id}$, and is a closed loop in $\on{Sp}(2m)$. 

\vspace{3pt}

Now estimate the lower semi-continuous Conley-Zehnder index $\LCZ$ of the lift $\widetilde{T\Phi}:\R \times \C^n \to \widetilde{\on{Sp}}(2n)$ at $(T,z)$. By the above discussion, we may write 
\[
\LCZ(\widetilde{T\Phi}(T,z)) = \LCZ\left(\widetilde{U}_1(T,z) \widetilde{Q}_1(T,z) \oplus \widetilde{U}_2(T,z)\right) 
\]
By the additivity and Maslov index properties of $\LCZ$ (see Lemma \ref{lem:LCZ_index}(b)-(c)), we have
\begin{equation} \label{eqn:LCZ_of_TPhi}
\LCZ(\widetilde{T\Phi}(T,z)) = 2\mu(\widetilde{U}_1(T,z)) + \LCZ(\widetilde{Q}_1(T,z)) + \LCZ(\widetilde{U}_2(T,z))
\end{equation}
To bound the first and third term, note that $U$ is a diagonal unitary matrix, so we may write
\[\widetilde{U}_1(T,z) = \bigoplus_{j=1}^m \widetilde{u}_j(T,z) \qquad\text{and}\qquad \widetilde{U}_2(T,z) = \bigoplus_{j=m+1}^n \widetilde{u}_j(T,z)\]
By the direct sum property of the Maslov index \cite[Thm. 2.2.12]{ms2017} and the Conley-Zehnder index (see Lemma \ref{lem:LCZ_index}(b)), along with the calculation of $\LCZ$ in Lemma \ref{lem:LCZ_of_U1_element}, we may thus write
\begin{equation} \label{eqn:CZ_of_U1} \mu(\widetilde{U}_1(T,z)) = \sum_{j = 1}^m \mu(\widetilde{u}_j(T,z)) \ge m\end{equation}
\begin{equation} \label{eqn:CZ_of_U2} \LCZ(\widetilde{U}_2(T,z)) = \sum_{j=m+1}^n \LCZ(\widetilde{u}_j(T,z)) \ge n - m \end{equation}
For the second term, note that $\widetilde{Q}_1(T,z)$ satisfies $e(\widetilde{Q}_1(T,z)^k) = e(\widetilde{Q}_1(kT,z)) = 0$ where $e$ is the eigenvalue quasimorphism in Example \ref{ex:eigenvalue_quasimorphism}. Thus the homogenization $\rho$ of $e$, which is the unique homogeneous rotation quasimorphism, is also $0$ on $\widetilde{Q{}}_1(T,z)$. Then by Lemma \ref{lem:LCZ_index}(d)
\begin{equation} \label{eqn:CZ_of_Q1} \LCZ(\widetilde{Q}_1(T,z)) \ge \rho(\widetilde{Q}_1(T,z)) - m = -m\end{equation}
By plugging (\ref{eqn:CZ_of_U1}-\ref{eqn:CZ_of_Q1}) into (\ref{eqn:LCZ_of_TPhi}), we acquire the desired lower bound.
\[
\LCZ(X,H_X;\gamma) = \LCZ(\widetilde{T\Phi}(T,z)) \ge 2m - m + (n-m) = n \qedhere
\]
\end{proof}

\begin{remark} Although we will not require this property later in the paper, Proposition \ref{prop:toric_ruelle} implies that the Ruelle invariant of a strictly monotone domain is always positive.

\begin{cor} \label{cor:Ruelle_of_monotone} Let $X$ be a strictly monotone, star-shaped toric domain in $\C^n$. Then
\[\on{ru}(X) > 0 \qquad\text{and}\qquad \on{Ru}(X) > 0\]
\end{cor}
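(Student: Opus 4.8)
\emph{Proof proposal.} The plan is to read off both inequalities directly from the toric Ruelle formula in Proposition \ref{prop:toric_ruelle} together with the characterization of strict monotonicity in Definition \ref{def:strictly_monotone}(b). Recall that Proposition \ref{prop:toric_ruelle} gives the pointwise identity $\on{ru}(X_\Omega)(z) = \sum_{i=1}^n \partial_i f_\Omega(\mu(z))$ on $\partial X_\Omega$ and the integral formula $\on{Ru}(X_\Omega) = \int_\Omega \big(\sum_i \partial_i f_\Omega\big)\cdot \dvol_{\R^n}$.

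First I would observe that since $X_\Omega$ is star-shaped, its boundary does not meet the origin, so $\mu(z) \neq 0$, i.e. $\mu(z) \in \Omega \setminus 0$, for every $z \in \partial X_\Omega$. By Definition \ref{def:strictly_monotone}(b), strict monotonicity means $\nabla f_\Omega(x) \in (0,\infty)^n$ for every $x \in \Omega \setminus 0$; in particular each coordinate $\partial_i f_\Omega(\mu(z))$ is strictly positive. Summing over $i$ gives $\on{ru}(X_\Omega)(z) > 0$ at every point of $\partial X_\Omega$, which is the first claimed inequality.

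For the second inequality, I would either integrate the (everywhere positive) density against the natural invariant volume form $\alpha \wedge d\alpha^{n-1}$ on $\partial X_\Omega$, which is a strictly positive measure, or — more simply — use the integral formula for $\on{Ru}(X_\Omega)$ directly: the integrand $\sum_i \partial_i f_\Omega$ is strictly positive on $\Omega \setminus 0$, a subset of full Lebesgue measure in $\Omega$, and $\Omega$ has positive Lebesgue measure since it is a star-shaped domain, so the integral is strictly positive. There is essentially no obstacle here; the only point requiring a word of care is that $\mu(z)$ may have some vanishing coordinates when $z$ lies on a coordinate subspace, but the relevant quantity $\mu(z)$ is nonzero \emph{as a vector} away from the origin, which is all that Definition \ref{def:strictly_monotone}(b) needs. \qed
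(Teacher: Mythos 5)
Your proof is correct and follows exactly the route the paper intends: the corollary is stated there as an immediate consequence of Proposition \ref{prop:toric_ruelle}, with positivity of the density coming from Definition \ref{def:strictly_monotone}(b) exactly as you argue. Your added care about vanishing coordinates of $\mu(z)$ versus $\mu(z)=0$ as a vector is a correct and worthwhile clarification, but does not change the argument.
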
 \end{remark}

\subsection{Concave Toric Domains} \label{subsec:concave_domains} We are interested in the following sub-class of monotone domains.

\begin{definition} A star-shaped toric domain $X_\Omega$ is \emph{concave} if the complement of $\Omega$ is convex. 
\end{definition}

\begin{lemma} A smooth concave toric domain $X_\Omega$ is strictly monotone, and thus dynamically convex.
\end{lemma}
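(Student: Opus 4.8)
The claim is that a smooth concave toric domain $X_\Omega$ is strictly monotone, which combined with Proposition \ref{prop:strictly_monotone_implies_dynamically_convex} gives dynamical convexity. So the entire content is the implication ``concave and smooth $\Rightarrow$ strictly monotone,'' and I would verify criterion (a) of Definition \ref{def:strictly_monotone}: that the outward unit normal $\nu_\Omega(x)$ to $\partial_+\Omega$ lies in the open positive orthant $(0,\infty)^n$ for every $x \in \partial_+\Omega$.

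\textbf{Key steps.}
First I would set up the geometry. By hypothesis the complement $\Omega^c = [0,\infty)^n \setminus \Omega$ (taken inside the closed orthant, or in $\R^n$ after noting $\Omega \subset [0,\infty)^n$) is convex, and $\partial_+\Omega = \partial\Omega \cap \{\text{the ``upper'' boundary}\}$ is exactly the part of $\partial\Omega$ not lying on the coordinate hyperplanes $\{x_i = 0\}$. The outward normal to $\Omega$ at a point $x \in \partial_+\Omega$ is the \emph{inward} normal to the convex set $\Omega^c$ at $x$. Second, I would argue the sign. Since $\Omega$ is star-shaped with respect to $0$ and $0 \in \operatorname{int}(\Omega)$, the ray from $0$ through $x$ exits $\Omega$ transversally at $x$, so $\langle \nu_\Omega(x), x\rangle > 0$; in particular $\nu_\Omega(x) \neq 0$ and it is not purely ``tangent.'' To get each coordinate strictly positive, I would use convexity of $\Omega^c$: for any $i$, the point $x + s e_i$ for $s>0$ large eventually leaves $[0,\infty)^n$... more carefully, observe that $\Omega^c$ contains a translate of the positive orthant cone based far out (since $\Omega$ is bounded), hence $\Omega^c$ contains points $x + t e_i$ for all sufficiently large $t$ and each $i$; by convexity the supporting half-space of $\Omega^c$ at $x$ (whose inward normal is $\nu_\Omega(x)$) must then satisfy $\langle \nu_\Omega(x), e_i\rangle \ge 0$ for every $i$, i.e. $\nu_\Omega(x) \in [0,\infty)^n$. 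Third, I would upgrade $\ge 0$ to $>0$ using smoothness: if some component vanished, $\nu_\Omega(x) \perp e_i$, so the tangent plane to $\partial_+\Omega$ at $x$ contains the $x_i$-direction; but then moving in the $\pm e_i$ direction from $x$ stays (to first order) on $\partial\Omega$. Combined with convexity of $\Omega^c$ and star-shapedness this forces $x$ to lie on $\{x_i = 0\}$ (the boundary face), contradicting $x \in \partial_+\Omega$ — alternatively, a vanishing component would make $\partial_+\Omega$ fail to be transverse to $\sum_i x_i \partial_{x_i}$ near the face, contradicting the star-shaped hypothesis that $\partial_+\Omega$ is transverse to the Liouville direction. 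This establishes (a).

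\textbf{Main obstacle.}
The delicate point is the strictness at the ``corners'' where $\partial_+\Omega$ meets a coordinate hyperplane $\{x_i = 0\}$: there, a naive argument only gives $\nu_i \ge 0$, and it is precisely smoothness of $X_\Omega$ (equivalently of $\partial\Omega$ in a neighborhood of such points, which for a toric domain is a genuine constraint — the boundary must meet the hyperplanes in a controlled $C^\infty$ way) that rules out $\nu_i = 0$. I would handle this by recalling the smoothness characterization of toric domains (as in \cite{gh2018}): $X_\Omega$ smooth means $\partial_+\Omega$ is a smooth hypersurface meeting each $\{x_i=0\}$ orthogonally-in-the-appropriate-sense, which directly forbids the normal from becoming tangent to that hyperplane except on the hyperplane itself. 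Once strict positivity of $\nu_\Omega$ on all of $\partial_+\Omega$ is in hand, Definition \ref{def:strictly_monotone}(a) holds, so $X_\Omega$ is strictly monotone, and Proposition \ref{prop:strictly_monotone_implies_dynamically_convex} yields that $X_\Omega$ is dynamically convex.
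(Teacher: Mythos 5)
Your overall strategy matches the paper's --- use convexity of the complement $K$ of $\Omega$ together with the fact that $\nu_\Omega$ is the inward normal to $K$, plus strict positivity of $\langle \nu_\Omega(x), x\rangle$ --- but the step where you upgrade $\langle\nu_\Omega(x), e_i\rangle \ge 0$ to $>0$ has a genuine gap. You only get the weak inequality because you test the supporting half-space of $K$ at $x$ against the points $w = x + t e_i$, for which $\langle \nu_\Omega(x), w - x\rangle = t\,\langle\nu_\Omega(x), e_i\rangle \ge 0$ yields nothing strict, and neither of your proposed repairs works. First, a point with $\nu_i(x)=0$ is not forced onto the face $\{x_i=0\}$, and even if it were, that is not a contradiction: $\partial_+\Omega = \mu(\partial X_\Omega)$ is compact and genuinely meets the coordinate hyperplanes (for the ball, $\partial_+\Omega$ is the entire closed hypotenuse, endpoints included; the paper's later notation $\partial_+\Omega_S$ presupposes this). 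Second, a vanishing normal component is perfectly compatible with transversality to $\sum_i x_i\partial_{x_i}$: a smoothed polydisk is a star-shaped toric domain whose upper boundary contains open regions where $\nu_\Omega$ equals a coordinate vector $e_i$, so star-shapedness (and smoothness) cannot by themselves rule out $\nu_i=0$; it is concavity that does so. The vague appeal to "the smoothness characterization of toric domains" does not close this.

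The fix is simply a better choice of test point, which is what the paper does: since $\Omega$ is bounded, $K$ contains $c\,e_i$ for some fixed large $c>0$, and the supporting half-space inequality $\langle \nu_\Omega(x), c\,e_i - x\rangle \ge 0$ gives $\langle\nu_\Omega(x), e_i\rangle \ge c^{-1}\langle\nu_\Omega(x), x\rangle$. The right-hand side is strictly positive because $\nu_\Omega = \nabla f_\Omega/|\nabla f_\Omega|$ and the Euler relation $\sum_j \mu_j \partial_j f_\Omega = f_\Omega$ gives $\langle x, \nabla f_\Omega(x)\rangle = f_\Omega(x) = 1$ on $\partial_+\Omega$. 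This proves $\nu_\Omega(x) \in (0,\infty)^n$ everywhere on $\partial_+\Omega$ in one stroke, with no separate analysis at the corners; the reduction to Proposition \ref{prop:strictly_monotone_implies_dynamically_convex} in your plan is then correct.
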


\begin{proof} It suffices to show that $\langle \nu_\Omega(x),e_i\rangle > 0$ for each unit basis vector $e_i$ and every $x \in \partial_+\Omega$. 

\vspace{3pt}

To prove this, let $K$ be the closure of $[0,\infty)^n \setminus \Omega$. Note that $\partial_+\Omega$ is a properly embedded smooth hypersurface in $[0,\infty)^n$ with $\partial_+\Omega \subset \partial K$. Moreover, the outward unit normal $\nu_\Omega$ to $\partial_+\Omega$ is normal and inward pointing along $\partial K$. Since $K$ is convex, this implies that 
\begin{equation} \label{eqn:concave_is_monotone_1}
\langle \nu_\Omega(x),w - x\rangle \ge 0 \qquad \text{for any $w \in K$ and $x \in \partial_+\Omega$}
\end{equation}
Since $\Omega$ is compact, $K$ contains the scaling $c \cdot e_i$ for every $i = 1,\dots,n$ and all $c > 0$ sufficiently large. Thus (\ref{eqn:concave_is_monotone_1}) implies that, for any $x \in \partial_+\Omega$, we have 
\[\langle \nu_\Omega(x),e_i\rangle \ge \frac{1}{c} \cdot \langle \nu_\Omega(x),x\rangle \qquad \text{for all $c > 0$ sufficiently large}\]
To finish the proof, note that $\langle x,\nu_\Omega(x)\rangle > 0$ for any $x \in \partial_+\Omega$ since
\[\langle \nu_\Omega(x),x\rangle = |\nabla f_\Omega(x)|^{-1} \cdot \langle x,\nabla f_\Omega(x)\rangle = |\nabla f_\Omega(x)|^{-1} \cdot f_\Omega(x) = |\nabla f_\Omega(x)|^{-1} > 0 \qedhere\]\end{proof}

We will need a formula for the minumum period of a Reeb orbit on the boundary of a concave toric domain given by Gutt-Hutchings \cite{gh2018}. Given a subset $S \subset \{1,\dots,n\}$, we adopt the notation
\[(0,\infty)^S := \{x \in [0,\infty)^n \; : \; x_i \neq 0 \text{ if and only if }i \in S\}\]
Given a star-shaped $\Omega \subset [0,\infty)^n$, we also let $\Omega_S \subset \Omega$ and $\partial_+\Omega_S \subset \partial_+\Omega$ be the subsets
\[\Omega_S := \Omega \cap (0,\infty)^S \qquad\text{and}\qquad \partial_+\Omega_S := \partial_+\Omega \cap \Omega_S\]
\begin{definition} The \emph{bracket} $[-]_\Omega$ of a concave, star-shaped moment region $\Omega$ is the function
\[
[-]_\Omega:[0,\infty)^n \to [0,\infty) \qquad\text{given by}\qquad [v]_\Omega := \on{min}\{\langle x,v\rangle \; : \; x \in \partial_+\Omega_{S}\} \qquad\text{if }v \in (0,\infty)^S 
\] \end{definition}

\begin{lemma} \label{lem:min_period_concave} \cite[\S 2.3, p. 22]{gh2018} Let $X_\Omega$ be a concave, star-shaped toric domain. Then
\[
c(X_\Omega) = \on{min}\{[v]_\Omega \; : \; v \in \Z_{\ge 0}^n \setminus 0\}
\]
\end{lemma}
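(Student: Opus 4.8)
The plan is to make the closed Reeb orbits on $\partial X_\Omega$ completely explicit using Lemma~\ref{lem:toric_formulas}, compute their periods, and match them with the combinatorial brackets $[v]_\Omega$. Since the Reeb flow of $\lambda|_{\partial X_\Omega}$ is the Hamiltonian flow of $H_X=f_\Omega\circ\mu$, Lemma~\ref{lem:toric_formulas}(c) shows that this flow preserves $\mu$ and rotates the coordinate $z_j$ at angular speed $\partial_j f_\Omega(\mu(z))$. Hence every closed Reeb orbit $\gamma$ is contained in a single torus fibre $\mu^{-1}(x)\cong(S^1)^{|S|}$, where $x\in\partial_+\Omega_S$ and $S=\{j:z_j\neq 0\}$ for $z\in\gamma$; if $T$ is the period of $\gamma$, then $\gamma$ closes up exactly when $T\partial_j f_\Omega(x)\in\Z$ for all $j\in S$. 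Because $X_\Omega$ is concave it is monotone, so $\partial_j f_\Omega(x)>0$ for $j\in S$; setting $v_j:=T\partial_j f_\Omega(x)$ for $j\in S$ and $v_j:=0$ otherwise therefore produces a vector $v\in\Z_{\ge0}^n\setminus 0$ with support exactly $S$.

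Next I would compute the period. For any Reeb orbit the action equals the period, $\int_\gamma\lambda=T$; in action--angle coordinates $\lambda=\sum_i\mu_i\,d\theta_i$ and along $\gamma$ one has $\mu_i\equiv x_i$ and $\dot\theta_i=\partial_i f_\Omega(x)$, so
\[
T=\int_\gamma\lambda=T\sum_i x_i\,\partial_i f_\Omega(x)=T\,f_\Omega(x)=\sum_i x_i v_i=\langle x,v\rangle ,
\]
using the Euler relation defining $f_\Omega$ together with $f_\Omega|_{\partial_+\Omega}=1$. As $x\in\partial_+\Omega_S$, this gives $T=\langle x,v\rangle\ge[v]_\Omega\ge\on{min}\{[w]_\Omega:w\in\Z_{\ge0}^n\setminus 0\}$, and minimizing over all closed orbits yields $c(X_\Omega)\ge\on{min}\{[v]_\Omega:v\in\Z_{\ge0}^n\setminus 0\}$.

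For the reverse inequality, fix $v\in\Z_{\ge0}^n\setminus 0$ with support $S$ and view $f_\Omega$ as a function on $(0,\infty)^S$. The functional $x\mapsto\langle x,v\rangle$ is positive and proper on $[0,\infty)^S$, so it attains the minimum $[v]_\Omega$ on $\partial_+\Omega_S$; concavity of $X_\Omega$ (convexity of $[0,\infty)^n\setminus\Omega$) is what lets us assume, after replacing $v$ by its restriction to a smaller support when the minimum is attained only on a lower face — an induction on $|S|$ whose base case $|S|=1$ is an explicit single-circle computation — that the minimizer $x_0$ lies in the relatively open stratum. There the first-order optimality condition reads $v=T\,\nabla_S f_\Omega(x_0)$ for some $T>0$, since $v$ and $\nabla_S f_\Omega(x_0)$ both lie in $(0,\infty)^S$; then $v=T\nabla_S f_\Omega(x_0)$ is integral by construction, while $\langle x_0,v\rangle=T\langle x_0,\nabla f_\Omega(x_0)\rangle=T f_\Omega(x_0)=T$, so $T=[v]_\Omega$. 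The torus fibre $\mu^{-1}(x_0)$ is then foliated by closed Reeb orbits of period $T$ (or, when $v$ is not primitive, by $T$-periodic iterates of a shorter orbit), so $c(X_\Omega)\le[v]_\Omega$; minimizing over $v$ completes the proof.

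The main difficulty is this last step: exhibiting $[v]_\Omega$ as the period of an honest closed Reeb orbit requires controlling how the minimizer of $\langle\cdot,v\rangle$ on $\partial_+\Omega_S$ meets the coordinate hyperplanes, and it is precisely here that concavity enters — both to keep (a suitable choice of) $v$'s minimizer in the open stratum and to force the outward normal direction $\nabla_S f_\Omega(x_0)$ to be a positive multiple of $v$, so that the resulting winding vector is integral. Everything else is bookkeeping with the toric formulas of Lemma~\ref{lem:toric_formulas} and the fact that the action of a Reeb orbit equals its period.
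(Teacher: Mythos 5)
Your argument is correct, but note that the paper does not prove this statement at all: it is quoted verbatim from Gutt--Hutchings \cite[\S 2.3]{gh2018}, so what you have written is a self-contained reconstruction rather than a parallel to anything in this paper. Your route is essentially the standard one behind the cited result: closed Reeb orbits lie in torus fibres $\mu^{-1}(x)$ over points $x\in\partial_+\Omega_S$ whose rotation vector $T\,\nabla_S f_\Omega(x)$ is integral, the action--period identity together with Euler's relation gives $T=\langle x,v\rangle$, and conversely a minimizer of $\langle\cdot,v\rangle$ on the stratum produces, via the Lagrange condition $v=T\nabla_S f_\Omega(x_0)$, an orbit of period exactly $[v]_\Omega$. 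Two small remarks. First, you attribute slightly too much to concavity: the only place it enters is through strict monotonicity ($\nabla f_\Omega\in(0,\infty)^n$ away from $0$, which the paper deduces from concavity), giving positivity of the rotation numbers and hence $T>0$ in the Lagrange step; the reduction to a smaller support when the minimum of $\langle\cdot,v\rangle$ is attained only on $\overline{\partial_+\Omega_S}\setminus\partial_+\Omega_S$ needs no convexity of the complement, since the limit point $x_0$ lies in $\partial_+\Omega_{S'}$ (as $\partial_+\Omega$ is compact) and $[v']_\Omega\le\langle x_0,v'\rangle=\langle x_0,v\rangle\le[v]_\Omega$, after which induction on $|S|$ applies. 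Second, that induction and its base case ($\partial_+\Omega_{\{i\}}$ is the single point $a_ie_i$, whose circle fibre has period $a_i=[e_i]_\Omega$) are only sketched in your write-up; they are routine, but spelling out the inequality displayed above is what actually closes the reduction step, so it should appear explicitly in a final version.
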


\noindent Note that, if $\Omega$ and $\Xi$ are concave, star-shaped moment regions with $\Omega \subset \Xi$, then $[-]_\Omega \le [-]_\Xi$. Thus, as a corollary of Lemma \ref{lem:min_period_concave} we have 

\begin{cor} If $X$ and $X'$ are concave, star-shaped and toric and $X \subset X'$, then $c(X) \le c(X')$.
\end{cor}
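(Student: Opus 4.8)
The plan is to derive this as an immediate consequence of the Gutt--Hutchings formula in Lemma~\ref{lem:min_period_concave}. Write $X = X_\Omega$ and $X' = X_\Xi$. First I would observe that $X_\Omega \subset X_\Xi$ holds if and only if $\Omega \subset \Xi$: one inclusion is obvious, and the other follows because the moment map $\mu$ surjects onto $[0,\infty)^n$, so any $w \in \Omega$ is $\mu(z)$ for some $z \in X_\Omega \subset X_\Xi$, whence $w \in \Xi$. Thus it suffices to establish the pointwise inequality
\[[v]_\Omega \le [v]_\Xi \qquad \text{for every } v \in \Z_{\ge 0}^n \setminus 0,\]
valid whenever $\Omega \subset \Xi$ are concave, star-shaped moment regions; taking the minimum of both sides over this common index set and invoking Lemma~\ref{lem:min_period_concave} then yields $c(X) = c(X_\Omega) \le c(X_\Xi) = c(X')$.

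To prove the bracket monotonicity, fix $v \in \Z_{\ge 0}^n \setminus 0$ and set $S := \{i : v_i \neq 0\}$, so that $v \in (0,\infty)^S$ and both brackets are computed using the same index set $S$. Choose $x \in \partial_+\Xi_S$ realizing $[v]_\Xi = \langle x, v\rangle$. Since $\Omega$ is star-shaped with respect to the origin and contained in $\Xi$, the ray $\{tx : t > 0\}$ exits $\Omega$ at a point $y = t_0 x$ with $t_0 \in (0,1]$, and scaling preserves the orthant $(0,\infty)^S$, so $y \in \partial_+\Omega_S$. Because $x$ and $v$ both have nonnegative entries we have $\langle x, v\rangle \ge 0$, and hence
\[[v]_\Omega \le \langle y, v\rangle = t_0 \langle x, v\rangle \le \langle x, v\rangle = [v]_\Xi,\]
which is the desired inequality.

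I expect no genuine obstacle here: the entire content is packaged into Lemma~\ref{lem:min_period_concave}, and what remains is the elementary scaling argument above. The one point to keep in mind is that the index set $S$ attached to $v$ is the same for $\Omega$ and $\Xi$ (it depends only on $v$), and that the exit parameter satisfies $t_0 \le 1$ — which holds because $x \notin \on{int}(\Omega)$, since otherwise a neighborhood of $x$ would lie in $\Omega \subset \Xi$, contradicting $x \in \partial\Xi$. In the final write-up I would compress this to a few lines.
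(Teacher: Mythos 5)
Your proof is correct and takes essentially the same route as the paper: both reduce the corollary to the monotonicity $[-]_\Omega \le [-]_\Xi$ of the bracket under inclusion of concave star-shaped moment regions and then invoke Lemma \ref{lem:min_period_concave}. The paper simply asserts this bracket monotonicity, whereas you supply a correct radial-scaling argument for it (noting that the exit point $y = t_0 x$ stays in the same stratum $(0,\infty)^S$ and that $t_0 \le 1$).
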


\subsection{Counter-Examples} \label{subsec:counterexamples} We conclude this paper by constructing new non-convex, dynamically convex domains in $\C^n$ by generalizing the strain operation of Dardennes-Gutt-Zhang \cite{dgz2021}. 

\begin{prop} \label{prop:toric_counter_examples} Let $X_\Omega$ be a star-shaped, concave toric domain. Then for any $C,\epsilon > 0$, there is
\[\text{a smooth, star-shaped, concave moment region} \quad \hat{\Omega} \supset \Omega\]
that satisfies the following properties
\[\on{vol}(X_\Omega) \le \on{vol}(X_{\hat{\Omega}}) \le \on{vol}(X_\Omega) + \epsilon \qquad \on{Ru}(X_{\hat{\Omega}}) \ge C \qquad c(X_\Omega) \le c(X_{\hat{\Omega}})\]
\end{prop}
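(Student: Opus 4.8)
The plan is to adapt the ``strain'' operation of Dardennes--Gutt--Zhang \cite{dgz2021}: glue to $\Omega$ a long, thin simplicial spike whose volume is negligible but which, by homogeneity of the canonical function, contributes an arbitrarily large amount to the Ruelle integral of Proposition \ref{prop:toric_ruelle}. Concretely, for parameters $T \gg 1$ and $0 < \delta \ll 1$ to be fixed last, set
\[
P := \{x \in [0,\infty)^n : g(x) \le 1\} = \sconv{\orig, T e_1, \delta e_2, \dots, \delta e_n}, \qquad \hat\Omega_0 := \Omega \cup P,
\]
where $g(x) := x_1/T + \sum_{i \ge 2} x_i/\delta$. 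The closure of $[0,\infty)^n \setminus \hat\Omega_0$ is the intersection of the closure of $[0,\infty)^n \setminus \Omega$ with the convex halfspace $\{g \ge 1\}$, hence convex, so $\hat\Omega_0$ is concave; it is clearly star-shaped about $\orig$ and bounded. It has corners, so the last step will be to pass to a smooth, concave, star-shaped $\hat\Omega \supset \hat\Omega_0$ close to it. Two of the three conclusions then come essentially for free from $\Omega \subset \hat\Omega$: since $X_\Omega$ and $X_{\hat\Omega}$ are concave, star-shaped, toric domains, $c(X_\Omega) \le c(X_{\hat\Omega})$ by the corollary following Lemma \ref{lem:min_period_concave}; and $\vol{X_{\hat\Omega}} \ge \vol{X_\Omega}$, with $\vol{X_{\hat\Omega}} - \vol{X_\Omega}$ equal to the Euclidean volume of $\hat\Omega \setminus \Omega$, which the construction keeps below $|P| + \epsilon/2 = T\delta^{n-1}/n! + \epsilon/2$.

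The heart of the argument is the lower bound on $\on{Ru}(X_{\hat\Omega})$, which I establish first for $\hat\Omega_0$. For $x \in P \setminus \Omega$, the ray from $\orig$ through $x$ leaves $P$ through the face $F := \{g = 1\} \cap P$ (because $P$ is a cone with apex $\orig$ over $F$, all of whose other facets contain $\orig$), and since $x \notin \Omega$ it must leave $\hat\Omega_0 = \Omega \cup P$ through $F$ as well. As $f_{\hat\Omega_0}$ is the unique degree-one homogeneous function with $f_{\hat\Omega_0}^{-1}(1) = \partial_+\hat\Omega_0$, it therefore agrees along that ray with the linear form $g$, so $\sum_i \partial_i f_{\hat\Omega_0} = \sum_i \partial_i g = 1/T + (n-1)/\delta$ on $P \setminus \Omega$. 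Because $\hat\Omega_0$ is concave, $\partial_i f_{\hat\Omega_0} \ge 0$ almost everywhere (by the argument of \S\ref{subsec:concave_domains}), so Proposition \ref{prop:toric_ruelle} gives
\[
\on{Ru}(X_{\hat\Omega_0}) = \sum_i \int_{\hat\Omega_0} \partial_i f_{\hat\Omega_0} \cdot \on{dvol}_{\R^n} \ \ge\ \Big(\tfrac1T + \tfrac{n-1}{\delta}\Big)\,|P \setminus \Omega| \ \ge\ \frac{n-1}{\delta}\Big(\frac{T\delta^{n-1}}{n!} - L_\Omega\,\delta^{n-1}\Big),
\]
where $L_\Omega := \sup\{x_1 : x \in \Omega\}$ and we used $|P \setminus \Omega| \ge |P| - |\{x \in \Omega : x_2, \dots, x_n \le \delta\}| \ge T\delta^{n-1}/n! - L_\Omega\delta^{n-1}$. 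Thus $\on{Ru}(X_{\hat\Omega_0}) \ge (n-1)\delta^{n-2}\big(T/n! - L_\Omega\big)$.

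Now choose parameters. Given $C, \epsilon > 0$, put $T := n!\,\epsilon/(2\delta^{n-1})$, so that $|P| = \epsilon/2$ (making the total volume increase at most $\epsilon$ once the smoothing error is kept below $\epsilon/2$), and the bound becomes $\on{Ru}(X_{\hat\Omega_0}) \ge (n-1)\big(\epsilon/(2\delta) - L_\Omega\delta^{n-2}\big)$, which tends to $\infty$ as $\delta \to 0$; fix $\delta$ small enough that this exceeds $2C$, which then determines $T$. It remains to replace $\hat\Omega_0$ by a smooth, star-shaped, concave $\hat\Omega \supset \hat\Omega_0$ with $|\hat\Omega \setminus \hat\Omega_0| \le \epsilon/2$ and $\on{Ru}(X_{\hat\Omega}) \ge C$: the large value of $\sum_i \partial_i f$ was produced on the open region $\operatorname{int}(P) \setminus \overline\Omega$ whose rays leave $\hat\Omega_0$ transversally through the flat face $F$, so a sufficiently fine concave smoothing perturbs $f_{\hat\Omega_0}$ only slightly there, changes the relevant integral by less than $C$, and the factor-of-two slack absorbs this.

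I expect the genuine obstacle to be exactly this smoothing step: producing $\hat\Omega$ that is simultaneously smooth and star-shaped (so $X_{\hat\Omega}$ is a bona fide smooth star-shaped toric domain to which Proposition \ref{prop:toric_ruelle} and the monotonicity of \S\ref{subsec:concave_domains} apply), concave (so the corollary to Lemma \ref{lem:min_period_concave} stays available), and both $\vol{\cdot}$- and $\on{Ru}$-close to $\hat\Omega_0$. The cleanest implementation is probably to build $\hat\Omega$ out of smooth convex data from the start---rounding the halfspace $\{g \ge 1\}$ and the transition region near $\partial_+\Omega$, controlled by a regularization parameter---exactly as in the four-dimensional construction of \cite{dgz2021}, and then to check that the Ruelle integral over a slightly shrunk copy of $P \setminus \Omega$ still exceeds $C$.
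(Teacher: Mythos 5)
Your construction is, in spirit, the same strain operation as the paper's (both adapt \cite{dgz2021}), just with the spike transposed: you glue a tube $P$ that is long in the $x_1$-direction and $\delta$-thin in the others, so the large density comes from $\partial_i f = 1/\delta$ for $i \ge 2$, whereas the paper glues the moment region $\Delta$ of a flat ellipsoid, of width $A^{-n}$ in $x_1$ and $A$ in the other directions, so the large density comes from $\partial_1 f = A^n$. Your identification of $f_{\hat\Omega_0}$ with the pointwise minimum of $f_\Omega$ and $g$ (hence with $g$ on $P \setminus \Omega$), the volume bookkeeping, the nonnegativity of the density via monotonicity, and the systole monotonicity via the corollary to Lemma \ref{lem:min_period_concave} are all correct, and your parameter choice does make the pre-smoothing Ruelle integral arbitrarily large while keeping $|P| = \epsilon/2$.

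The one genuine gap is the step you flag yourself: the claim that a sufficiently fine concave smoothing changes the Ruelle integral by less than $C$ is asserted, not proved, and it is not obviously harmless. Your integrand has size $(n-1)/\delta$ concentrated on a region of volume only $O(T\delta^{n-1})$, so a smoothing that modifies $f$ on, say, a $\delta$-scale neighborhood of $\partial_+ P$ could a priori eat a definite fraction of the integral; moreover $\Omega$ itself need not be smooth, so the modification is not localized at a single corner. The paper sidesteps any perturbation estimate structurally: it fixes a slab $\Xi \supset \Omega$ and requires the smoothing to satisfy $\hat\Omega \setminus \Xi = \Delta \setminus \Xi$, so that $f_{\hat\Omega} = f_\Delta$ identically there; the lower bound is obtained by integrating only over $\Delta \setminus \Xi$, and the only input about the smoothing elsewhere is nonnegativity of the density (monotonicity of the smooth concave $\hat\Omega$). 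The same device repairs your argument verbatim: pick $B > L_\Omega$, require $\hat\Omega \cap \{x_1 \ge B\} = P \cap \{x_1 \ge B\}$ in addition to your containments, and integrate the density only over $P \cap \{x_1 \ge B\}$, which is a translate of $(1 - B/T)\cdot P$ and so has volume $(1-B/T)^n\,|P|$, nearly all of $|P|$ once $T \gg B$; then no estimate on how the smoothing perturbs $f$ is needed, only the (still unconstructed but routine) existence of such a smoothing, which is the same level of detail the paper itself leaves implicit.
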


\begin{proof} We start by fixing some notation. Fix a large $B > 0$ such that the moment region
\[
\Xi := \big\{x \in [0,\infty)^n \; : \; B^{-1} \cdot x_2 + \dots + B^{-1} \cdot x_n \le 1\big\} \qquad\text{satisfies}\qquad \Omega \subset \Xi
\]
Also let $\Delta$ denote the moment region for a very flat ellipsoid, given by
\[\Delta := \big\{x \in [0,\infty)^n \; : \; A^n \cdot x_1 + A^{-1} \cdot x_2 + \dots + A^{-1} \cdot x_n \le 1\big\}\]
Here $A$ is a positive constant that we will specify below. The volume and Ruelle density of $X_\Delta$ can be calculated as
\[
\on{vol}(X_\Delta,\omega) = \on{vol}(\Delta) = \frac{1}{n!} \cdot A^{-1} \qquad\text{and}\qquad \on{ru}(\C^n,H_\Delta) =  A^n + (n-1) \cdot A^{-1} 
\]
Moreover, the volume of $\Delta \setminus \Xi$ can be estimated as
\[
\on{vol}(\Delta \setminus \Xi) = \on{vol}(\Delta) - \on{vol}(\Xi \cap \Delta) \ge \frac{1}{n!} \cdot A^{-1} - A^{-n} \cdot B^{n-1}
\]

Now let $\hat{\Omega}$ be a smooth, star-shaped, concave moment region given by a concave smoothing of the union $\Omega \cup \Delta$ such that
\[
\Omega \cup \Delta \subset \hat{\Omega} \subset (1+\frac{1}{A}) \cdot \Omega \cup \Delta \qquad\text{and}\qquad \hat{\Omega} \setminus \Xi = \Delta \setminus \Xi
\]
The only non-trivial bounds are the volume upper bound and the Ruelle invariant lower bound. For the volume bound, we note that for sufficiently large $A$, we have
\begin{equation}
\on{vol}(\hat{\Omega}) \le (1 + \frac{1}{A})^n \cdot \on{vol}(\Omega \cup \Delta) \le (1 + \frac{1}{A})^n \cdot (\on{vol}(\Omega) + \frac{A^{-1}}{n!}) \le \on{vol}(\Omega) + \epsilon
\end{equation}
For the Ruelle bound, we note that the Ruelle density of $X_{\hat{\Omega}}$, given by
\[
\on{ru}(\C^n,X_{\hat{\Omega}}) = \sum_i \partial_i f_{\hat{\Omega}} \ge 0
\] which is positive since $\hat{\Omega}$ is monotone. Moreover, we have
\[
f_{\hat{\Omega}} = f_{\Delta} \qquad\text{on}\qquad \hat{\Omega} \setminus \Xi = \Delta \setminus \Xi
\]
\[
\on{Ru}(X_{\hat{\Omega}}) = \sum_i \int_{\hat{\Omega}} \partial_i f_{\hat{\Omega}} \cdot \on{dvol}_{\R^n} \ge \sum_i \int_{\Delta \setminus \Xi} \partial_if_\Delta \cdot \on{dvol}_{\R^n} = (\frac{1}{n!} \cdot A^{-1} - A^{-n} \cdot B^{n-1}) \cdot (A^n + (n-1) \cdot A^{-1})
\]
For sufficiently large $A$, we can thus acquire $\on{Ru}(X_{\hat{\Omega}}) \ge C$. This concludes the proof. \end{proof}

\bibliographystyle{hplain}
\bibliography{dyn_cvx_2_bib}

\end{document}